\documentclass[11pt,twoside,english]{article}
\usepackage{mathrsfs}
\usepackage{amsmath, amsfonts, amssymb, amsthm}
\usepackage[T1]{fontenc}
\usepackage[english]{babel}
\usepackage{graphicx}
\usepackage{hyperref}
\hypersetup{hidelinks}
\usepackage{algorithm}
\usepackage{algorithmic}
\usepackage{booktabs}
\usepackage{enumitem}
\usepackage{mathtools}
\usepackage{authblk}

\newtheorem{theorem}{Theorem}

\newtheorem{proposition}[theorem]{Proposition}

\newtheorem{definition}[theorem]{Definition}
\newtheorem{corollary}[theorem]{Corollary}
\newtheorem{remark}[theorem]{Remark}
\newtheorem{assumption}[theorem]{Assumption}

\newcommand{\cP}{\ensuremath{\mathcal P}}

\allowdisplaybreaks[3]

\title{Continuous-Time Reinforcement Learning for $N$-Player Stochastic Differential Games with Exploratory Policies
}
\author[1]{Jisheng Liu}
\author[1]{Jing Zhang\thanks{The work of the second author is supported by National Key R\&D Program of China (2022YFA1006101), National Natural Science Foundation of China (12271103, 12031009) and Shanghai Science and Technology Commission Grant (21ZR140860).}}

\affil[1]{School of Mathematical Sciences, Fudan University, China}
\date{}

\begin{document}
\maketitle

\begin{abstract}
\noindent
We study entropy-regularized exploratory control in finite $N$-player stochastic differential games under a model of strategic interaction in which each player responds to the actions currently realized by the others.  Because no player can forecast the actions the others will take later, the future is assessed with the opponents held at their realized values: the continuation value entering the Hamiltonian is that of the player's own control problem with the opponents frozen at the observed profile.  This value is determined by the game data and requires no external input.  The standard exploratory Nash problem is a fixed point in complete continuation policies and cannot in general be reduced to current opponent actions; it is a distinct object, and we do not solve it here.  The full conditionals of the model need not have a common joint realization.  We characterize compatibility by a cross-partial condition, construct the joint density when it exists, and relate the condition to weighted potential games and Gibbs equilibria.  We also study approximate compatibility, stationary counterparts, and $q$-learning characterizations.
\end{abstract}

\noindent\textbf{Keywords:} stochastic differential games, reinforcement learning, exploratory policy, time-local conditional equilibrium, compatibility condition, entropy regularization, Gibbs equilibrium, potential games, $q$-learning

\medskip
\noindent\textbf{AMS Subject Classification:} 91A15, 93E20, 60H10; Secondary: 49L20, 91A10

\section{Introduction}
\label{sec:intro}

Entropy-regularized exploratory control provides a continuous-time formulation of reinforcement learning in which actions are randomized and the resulting control problem is represented by an exploratory HJB equation \cite{WZZ20}.  Martingale methods for policy evaluation, policy gradient, and $q$-learning were developed in \cite{JZ22a,JZ22b,JZ23}; the sampling formulation used in the $q$-learning martingale results was subsequently corrected in \cite{JZ23err}.  Related extensions include regret analysis and continuous-time LQ learning \cite{TangZhou24,HuangJZ24,HuangZhou25}.

For finite-player stochastic differential games, the standard Nash condition is a fixed point in complete strategies: player $i$ optimizes against the continuation policies of the other players.  This remains true under exploratory randomization.  Although only the opponents' current mixed action appears explicitly in the HJB Hamiltonian, the derivatives of the value function depend on their continuation policies.  Thus the standard best response cannot in general be reduced to a control problem parametrized only by the opponents' current realized actions.

The present paper studies a model in which each player responds to the actions currently realized by the others.  The response is an entropy-regularized best response, so the future enters through a continuation value in the Hamiltonian.  Because a player cannot forecast the opponents' future actions, that continuation value is computed with the opponents held at their realized profile; the assessment is then that of a single-agent control problem and is determined by the game data.  The response rule is adopted from the outset, not as an approximation to the Nash condition above.  At a fixed $(t,x)$, player $i$'s response is specified conditionally on the contemporaneous coordinates $u^{-i}$.  This produces a family of full conditional densities.  Unlike a collection of marginals, a collection of full conditionals need not arise from any joint distribution; this is the classical compatibility problem for conditionally specified distributions \cite{ArnoldPress89,ArnoldCastilloSarabia92}.  We call a common joint realization a \emph{time-local conditional equilibrium}.  The notion is distinct from ordinary Nash equilibrium.  The distinction is especially transparent in stationary discounted settings: Nash imposes consistency of complete state-contingent continuation rules, whereas the time-local notion imposes statewise consistency of contemporaneous conditional responses.  The short-sightedness here concerns the opponents' future actions only.  The player's own future rewards and the evolution of the state enter the continuation value in full.

For Gibbs conditional responses, compatibility has a simple differential form.  We show that a joint density exists exactly when the entropy-scaled action gradients of the playerwise Hamiltonians form a conservative field.  On a simply connected action domain this is equivalent to a cross-partial condition and yields the joint density by a Poincar\'e construction.  The same condition is an entropy-weighted potential condition, linking the model to potential games \cite{MondererShapley96}.  It also parallels Gibbs-equilibrium models in which local conditional choice laws must be consistent with a single joint distribution \cite{Blume93,BlumeDurlauf03}.

Correlated equilibrium provides another route to non-product joint action laws \cite{Aumann74}.  Related entropy and bounded-rationality variants include maximum-entropy correlated equilibrium, quantal correlated equilibrium, and entropy-regularized coupled formulations that approximate correlated equilibrium \cite{OrtizSchapireKakade07,CernyAnZhang22,LiYuDorflerLygeros24}.  The relation here is structural rather than an equivalence: correlated-equilibrium incentive constraints condition on a player's own recommendation and the induced posterior over the other actions, whereas the present construction specifies the full conditional law of that player's action given the contemporaneous actions of the others and then asks whether these conditionals are compatible.

When exact compatibility fails, a coordinate path construction gives an approximate joint realization with explicit KL bounds; stationary infinite-horizon analogues are also given.  The later $q$-learning sections identify the conditional response maps of the model, not the standard Nash fixed point.

\section{Preliminaries}\label{sec:prelim}
Let $(\Omega,\mathscr{F},\{\mathscr{F}_t\}_{t\in[0,T]},\mathbb{P})$ be a filtered probability space satisfying the usual conditions. Throughout the paper, $T>0$ is a fixed finite time horizon and $N\in\mathbb{N}$ is the number of players. The joint state space is $\mathbb{R}^N$, with $x=(x^1,\ldots,x^N)^\intercal\in\mathbb{R}^N$ denoting the joint state, while each player has the action space $U\subset\mathbb{R}^m$.

We adopt the following standard notation: $\partial_t$ denotes the partial derivative with respect to $t$, while $\partial_x$ and $\nabla_x$ both denote the gradient with respect to $x$, and $\partial_x^2$ and $D^2_x$ both denote the Hessian with respect to $x$ (the two notations are used interchangeably). For matrices $A$ and $B$ of the same dimension, we write $A:B:=\operatorname{tr}(AB^\intercal)$ for their Frobenius inner product.

Throughout, a superscript $i\in\{1,\ldots,N\}$ indexes the players: $x^i$ is player $i$'s state, $u^i$ player $i$'s action, and $\gamma^i>0$, $\beta^i\ge0$ are player $i$'s exploration weight and discount rate. For an action profile $u=(u^1,\ldots,u^N)\in U^N$, we write $u^{-i}=(u^1,\ldots,u^{i-1},u^{i+1},\ldots,u^N)\in U^{N-1}$ for the profile of all players except $i$. Objects carrying a tilde, such as $\tilde{V}^i$ and $\tilde{J}^i$, denote the entropy-regularized (exploratory) analogues of the classical value function $V^i$ and performance functional $J^i$; a subscript $*$, as in $\pi^i_*$ and $q^i_*$, denotes an optimal object.

We recall the parabolic H\"older function spaces used below; standard references are \cite{Krylov87,GT01}.  For an integer $k\ge0$ and $\alpha\in(0,1)$, $C^{k+\alpha}$ denotes the space of $k$-times continuously differentiable functions whose $k$-th order derivatives are $\alpha$-H\"older continuous. For functions of $(t,x)\in[0,T]\times\mathbb{R}^N$, we use the following spaces:\\
$C^{1,2}$: functions once continuously differentiable in $t$ and twice continuously differentiable in $x$; $C^{1+\alpha/2,2+\alpha}$: the parabolic H\"older space of functions whose first derivative in $t$ is $\alpha/2$-H\"older continuous (uniformly in $x$) and whose second derivatives in $x$ are $\alpha$-H\"older continuous (uniformly in $t$); $C^{2+\alpha}$: functions of $x$ whose second derivatives are $\alpha$-H\"older continuous.

For functions of $(t,x,u)\in[0,T]\times\mathbb{R}^N\times U^N$, the mixed space $C^{1,2+\alpha,\ell}$ with $\ell\in\{1,2\}$ consists of functions that are once continuously differentiable in $t$, have $\alpha$-H\"older continuous second derivatives in $x$, and are $\ell$ times continuously differentiable in $u$. For functions of $(t,x,u)\in[0,T]\times\mathbb{R}^N\times U$, the space $C^{\alpha/2,\alpha,1+\alpha}$ consists of functions that are $\alpha/2$-H\"older continuous in $t$, $\alpha$-H\"older continuous in $x$, and whose first derivatives in $u$ are $\alpha$-H\"older continuous. The subscript $\mathrm{loc}$ indicates that the stated regularity holds on every compact subset.

\section{Problem Formulation}
\label{sec:problem}

\subsection{Classical \texorpdfstring{$N$}{N}-Player Game}

For $N\in\mathbb{N}$ and $T>0$, let $X_t=(X_t^1,\ldots,X_t^N)^\intercal\in\mathbb{R}^N$ denote the joint state process of $N$ players for all $t\in[0,T]$. The state is accessible to all players and satisfies:
\begin{align}\label{eq sec3 state}
dX_t^i = b^i(t,X_t,\boldsymbol{\alpha}_t)dt + \sigma^i(t,X_t,\boldsymbol{\alpha}_t)dW_t^i, \quad i=1,\ldots,N,
\end{align}
where $\boldsymbol{\alpha}_\cdot:=(\alpha^1_\cdot,\ldots,\alpha^N_\cdot)^\intercal$ is the $\mathbb{F}$-adapted, $U$-valued control vector, and $W^1,\ldots,W^N$ are mutually independent one-dimensional standard Brownian motions. The coefficients $b^i:[0,T]\times\mathbb{R}^N\times U^N\to\mathbb{R}$ and $\sigma^i:[0,T]\times\mathbb{R}^N\times U^N\to\mathbb{R}$ are measurable in all variables.

We can also write \eqref{eq sec3 state} in the integral form:
\begin{align*}
X_s^{t,x,\boldsymbol{\alpha},i} = x^i + \int_t^s b^i(r,X_r^{t,x,\boldsymbol{\alpha}},\boldsymbol{\alpha}_r)dr + \int_t^s\sigma^i(r,X_r^{t,x,\boldsymbol{\alpha}},\boldsymbol{\alpha}_r)dW_r^i.
\end{align*}

\begin{remark}\label{rmk f-dep}
The running reward $f^i$ depends on $\alpha^i$ alone, while the state dynamics depend on the full control vector $\boldsymbol{\alpha}$. The general case where $f^i$ depends on $\boldsymbol{\alpha}$ introduces additional coupling in the HJB system and is left for future study.
\end{remark}

For a fixed continuation control profile $\alpha^{-i}$ of the other players, player $i$ evaluates
\begin{align*}
J^i(t,x;\alpha^i,\alpha^{-i})
:=\mathbb E\bigg[\int_t^T e^{-\beta^i(s-t)}f^i(s,X_s^{t,x,\boldsymbol\alpha},\alpha_s^i)ds
+e^{-\beta^i(T-t)}g^i(X_T^{t,x,\boldsymbol\alpha})\bigg],
\end{align*}
and $V^i(t,x;\alpha^{-i})=\sup_{\alpha^i}J^i(t,x;\alpha^i,\alpha^{-i})$.  A classical Nash equilibrium is a profile $\boldsymbol\alpha^*$ for which $\alpha^{i,*}$ attains this best response against $\alpha^{-i,*}$ for every $i$.  The exploratory formulation below replaces each deterministic control by a randomized Markov policy; the corresponding best-response HJB is recorded explicitly in Section~\ref{sec:problem}.

\begin{assumption}\label{assumption coe}
The coefficients of the state dynamics and the reward functions satisfy the following conditions:
\begin{enumerate}
    \item $b^i,\sigma^i,f^i,g^i$ are continuous in their respective arguments.
    \item There exists a constant $L>0$ such that for all $(t,u)\in[0,T]\times U^N$ and $x,x'\in\mathbb{R}^N$,         
    \begin{align*}
        \sum_{i=1}^N\big[|b^i(t,x,u)-b^i(t,x',u)|+|\sigma^i(t,x,u)-\sigma^i(t,x',u)|\big]\leq L|x-x'|.
    \end{align*}
    \item There exists a constant $L_u>0$ such that for all $(t,x)\in[0,T]\times\mathbb{R}^N$ and $u,v\in U^N$,
    \begin{align*}
        \sum_{i=1}^N\big[|b^i(t,x,u)-b^i(t,x,v)|+|\sigma^i(t,x,u)-\sigma^i(t,x,v)|\big]\leq L_u|u-v|.
    \end{align*}
    \item There exists a constant $C>0$ such that for all $(t,x,u)\in[0,T]\times\mathbb{R}^N\times U^N$, 
    \begin{align*}
        & \sum_{i=1}^N\big[|b^i(t,x,u)|+|\sigma^i(t,x,u)|\big]\leq C(1 + |x| + |u| ).
    \end{align*}
    \item There exist constants $C>0$, $p_f\ge2$, and $q_f\ge1$ such that,
    for all $(t,x,u^i)\in[0,T]\times\mathbb{R}^N\times U$ and
    $1\le i\le N$,
        \begin{align*}
            |f^i(t,x,u^i)|\leq C(1+|x|^{p_f}+|u^i|^{q_f}),\quad |g^i(x)|\leq C(1+|x|^{p_f}).
        \end{align*}
\end{enumerate}
\end{assumption}

\begin{assumption}\label{assumption regularity}
In addition to Assumption~\ref{assumption coe}, the diffusion is uniformly elliptic: there exists $\sigma_0>0$ such that $\sum_{k=1}^N[\sigma^k(t,x,u)]^2\,\xi_k^2\ge\sigma_0^2|\xi|^2$ for all $(t,x,u,\xi)\in[0,T]\times\mathbb{R}^N\times U^N\times\mathbb{R}^N$. The coefficients satisfy $b^i,\sigma^i\in C^{1,2+\alpha,2}$ in $(t,x,u)$, $f^i\in C^{1,2+\alpha,1}$ in $(t,x,u^i)$, and $g^i\in C^{2+\alpha}$ in $x$, for some $\alpha\in(0,1)$, with all indicated derivatives bounded.\end{assumption}

\subsection{Exploratory Formulation}

Following \cite{WZZ20}, we recast the game in the framework of reinforcement learning, replacing each player's deterministic control with a stochastic policy to model the exploration--exploitation balance.

Denote by $\cP(U)$ the set of Borel probability measures on $U$ that are absolutely continuous with respect to Lebesgue measure, and identify each such measure with its density. A \emph{stochastic policy} $\pi$ maps $(t,x)\in[0,T]\times\mathbb{R}^N$ to a density $\pi(\cdot;t,x)\in\cP(U)$, jointly measurable in $(t,x,a)$, with $\mathrm{supp}(\pi)=U$.  For a standard profile $\boldsymbol\pi=(\pi^1,\ldots,\pi^N)$, the players randomize independently and we write
\[
\boldsymbol\pi(du;t,x):=\bigotimes_{j=1}^N\pi^j(du^j;t,x).
\]
A joint density used later for the time-local conditional equilibrium need not have this product form.

\begin{definition}\label{def:admissible_policy}
A stochastic policy $\pi$ is called \emph{admissible} (writing $\pi\in\Pi$) if:
\begin{enumerate}
\item $\pi(u;t,x)$ is continuous in $(t,x)$: as $(t',x')\to(t,x)$,
$\int_U|\pi(u;t,x)-\pi(u;t',x')|du\to 0$;
and Lipschitz continuous in $x$: there exists $C>0$ such that
$\int_U|\pi(u;t,x)-\pi(u;t,x')|du\leq C|x-x'|$ for all $x,x'\in\mathbb{R}^N$.
\item For all $(t,x)$: $\int_U|\log\pi(u;t,x)|\pi(u;t,x)du\leq C(1+|x|^p)$ for some $p\geq 2$, and $\int_U|u|^p\pi(u;t,x)du\leq C_p(1+|x|^p)$ for any $p\geq 1$.
\item Let $G^\pi$ be the \emph{action function} of $\pi$ (i.e., $a=G^\pi(t,x,Z_s)\sim\pi(\cdot;t,x)$ for $Z_s\sim \mathrm{Uniform}([0,1]^m)$); for any $p\geq 2$, $\int_{[0,1]^m}|G^\pi(t,x,z)-G^\pi(t,x',z)|^pdz\leq L_p|x-x'|^p$.
\end{enumerate}
\end{definition}

\begin{remark}[Absolute continuity of exploratory policies]\label{rmk:absolute_continuity}
The entropy term is written for densities with respect to Lebesgue measure, so the exploratory policy class is restricted to absolutely continuous laws.  Whenever the relevant partition function is finite, the Gibbs optimizer derived below also belongs to this class.  The zero-exploration limit, which may leave the class of densities, is not used in the compatibility results.
\end{remark}

The exploratory diffusion is defined through the averaged coefficients, following \cite{WZZ20}.  This averaged formulation does not require a continuum of independently sampled actions.  When sampled action processes are used later for $q$-learning, we use the time-grid construction of the erratum \cite{JZ23err}.

The \emph{exploratory (average) state} $X^{t,x,\boldsymbol{\pi}}$ is the solution to:
\begin{align}\label{eq exploratory state}
X_s^{t,x,\pi,i} = x^i + \int_t^s\tilde{b}^i(r,X_r^{t,x,\boldsymbol{\pi}},\boldsymbol{\pi}_r)dr + \int_t^s\tilde{\sigma}^i(r,X_r^{t,x,\boldsymbol{\pi}},\boldsymbol{\pi}_r)dW_r^i,
\end{align}
where $\boldsymbol{\pi}_\cdot=\{\pi^1_\cdot,\ldots,\pi^N_\cdot\}$, and the averaged coefficients are
\begin{align*}
\tilde{b}^i(t,x,\boldsymbol{\pi}) := \int_{U^N} b^i(t,x,u)\boldsymbol{\pi}(u;t,x)du,\quad
\tilde{\sigma}^i(t,x,\boldsymbol{\pi}) := \sqrt{\int_{U^N}[\sigma^i(t,x,u)]^2\,\boldsymbol{\pi}(u;t,x)du}.
\end{align*}
The averaged diffusion~\eqref{eq exploratory state} is the exploratory state formulation of \cite{WZZ20}.  It is used directly below; sampled action processes enter only in the learning section.

The exploratory performance functional for player $i$ incorporates Shannon entropy regularization:
\begin{align}\label{eq exploratory value}
\tilde{J}^i(t,&x;\boldsymbol{\pi})\nonumber\\
:=& \mathbb{E}\bigg[\int_t^T e^{-\beta^i(s-t)}\int_U\big[f^i(s,X_s^{t,x,\boldsymbol{\pi}},u^i) - \gamma^i\log\pi^i(u^i;s,X_s^{t,x,\boldsymbol{\pi}})\big]\pi^i(u^i;s,X_s^{t,x,\boldsymbol{\pi}})du^i\,ds\nonumber\\
&\qquad+ e^{-\beta^i(T-t)}g^i(X_T^{t,x,\boldsymbol{\pi}})\bigg],
\end{align}
where $\gamma^i>0$ is the exploration weight for player $i$. The regularization $-\gamma^i\log\pi^i$ penalizes policies with low entropy. The exploratory value function of player $i$ is
\begin{align*}
\tilde{V}^i(t,x;\pi^{-i}) := \sup_{\pi^i\in\Pi}\tilde{J}^i(t,x;\pi^i,\pi^{-i}).
\end{align*}

We define for convenience:
\begin{align*}
\tilde{f}^i(t,x,\pi^i) &:= \int_U f^i(t,x,u^i)\pi^i(u^i;t,x)du^i,\\
\mathcal{E}(t,x,\pi^i) &:= \int_U\log\pi^i(u^i;t,x)\pi^i(u^i;t,x)du^i \quad \text{(negative entropy)}.
\end{align*}

Under Assumption~\ref{assumption coe}, the exploratory SDE~\eqref{eq exploratory state} is well-posed for each admissible Markov profile $\boldsymbol\pi$.  Let $Z=(Z^1,\ldots,Z^N)$ be independent uniform seeds and set
\[
G^{\boldsymbol\pi}(t,x,Z):=\bigl(G^{\pi^1}(t,x,Z^1),\ldots,G^{\pi^N}(t,x,Z^N)\bigr).
\]
Then
\[
\tilde b^i(t,x,\boldsymbol\pi)=\mathbb E_Z[b^i(t,x,G^{\boldsymbol\pi}(t,x,Z))],
\qquad
\tilde\sigma^i(t,x,\boldsymbol\pi)=\|\sigma^i(t,x,G^{\boldsymbol\pi}(t,x,Z))\|_{L^2(Z)}.
\]
For $x,x'\in\mathbb R^N$, Assumption~\ref{assumption coe}(2)--(3), Jensen's inequality, and Definition~\ref{def:admissible_policy}(3) give
\begin{align*}
|\tilde b^i(t,x,\boldsymbol\pi)-\tilde b^i(t,x',\boldsymbol\pi)|
&\le L|x-x'|+L_u\,\|G^{\boldsymbol\pi}(t,x,Z)-G^{\boldsymbol\pi}(t,x',Z)\|_{L^1(Z)}\\
&\le C_{\boldsymbol\pi}|x-x'|.
\end{align*}
For the diffusion coefficient, the reverse triangle inequality in $L^2(Z)$ yields
\begin{align*}
|\tilde\sigma^i(t,x,\boldsymbol\pi)-\tilde\sigma^i(t,x',\boldsymbol\pi)|
&\le\|\sigma^i(t,x,G^{\boldsymbol\pi}(t,x,Z))-\sigma^i(t,x',G^{\boldsymbol\pi}(t,x',Z))\|_{L^2(Z)}\\
&\le C_{\boldsymbol\pi}|x-x'|.
\end{align*}
The linear-growth bound follows in the same way from Assumption~\ref{assumption coe}(4) and the moment condition in Definition~\ref{def:admissible_policy}(2).  Standard SDE estimates therefore give a unique strong solution and $\mathbb E[\sup_{s\in[t,T]}|X_s^{t,x,\boldsymbol\pi}|^2]<\infty$.

\subsection{Exploratory HJB Equation and the Standard Nash Benchmark}

Fix the continuation policies $\pi^{-i}$ of the players other than $i$.  The best-response DPP is the single-agent exploratory DPP with the opponents' policies entering the averaged coefficients as fixed data.

\begin{proposition}[Dynamic Programming Principle]\label{prop:DPP}
Fix $\pi^{-i}\in\Pi^{N-1}$.  Suppose $\tilde V^i(\cdot,\cdot;\pi^{-i})$ is continuous and the admissible control class is taken with the standard stable closure under the concatenations required by the weak DPP.  For $(t,x)\in[0,T]\times\mathbb R^N$ and any $[t,T]$-valued stopping time $\tau$,
\begin{align}\label{eq:DPP_stopping}
\tilde{V}^i(t,x;\pi^{-i}) =& \sup_{\pi^i\in\Pi}\mathbb{E}\bigg[\int_t^\tau e^{-\beta^i(r-t)}\big(\tilde{f}^i(r,X_r^{t,x,\boldsymbol{\pi}},\pi_r^i) - \gamma^i\mathcal{E}(r,X_r^{t,x,\boldsymbol{\pi}},\pi_r^i)\big)dr\nonumber\\
&\qquad+ e^{-\beta^i(\tau-t)}\tilde{V}^i(\tau,X_\tau^{t,x,\boldsymbol{\pi}};\pi^{-i})\bigg].
\end{align}
In particular, for deterministic $s\in[t,T]$,
\begin{align}\label{eq:DPP_deterministic}
\tilde{V}^i(t,x;\pi^{-i}) =& \sup_{\pi^i\in\Pi}\mathbb{E}\bigg[\int_t^s e^{-\beta^i(r-t)}\big(\tilde{f}^i(r,X_r^{t,x,\boldsymbol{\pi}},\pi_r^i) - \gamma^i\mathcal{E}(r,X_r^{t,x,\boldsymbol{\pi}},\pi_r^i)\big)dr \nonumber\\
&\qquad+ e^{-\beta^i(s-t)}\tilde{V}^i(s,X_s^{t,x,\boldsymbol{\pi}};\pi^{-i})\bigg].
\end{align}
\end{proposition}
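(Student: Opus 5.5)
The plan is to reduce the statement to the standard weak dynamic programming principle for a single-agent controlled diffusion. Once $\pi^{-i}$ is fixed, the averaged coefficients
\[
\hat b^j(r,y,\pi^i):=\tilde b^j\bigl(r,y,(\pi^i,\pi^{-i})\bigr),\qquad
\hat\sigma^j(r,y,\pi^i):=\tilde\sigma^j\bigl(r,y,(\pi^i,\pi^{-i})\bigr)
\]
depend only on player $i$'s relaxed control. The running reward is $\hat f(r,y,\pi^i):=\tilde f^i(r,y,\pi^i)-\gamma^i\mathcal E(r,y,\pi^i)$, the terminal reward is $g^i$, and the discount rate is $\beta^i$. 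This is exactly the relaxed control problem of \cite{WZZ20}, with control set $\cP(U)$. The well-posedness argument given after \eqref{eq exploratory value} shows that the state equation has a unique strong solution with a finite second moment. The growth bounds in Assumption~\ref{assumption coe}(5), together with the entropy and moment bounds in Definition~\ref{def:admissible_policy}(2), make $\tilde J^i$ finite. They also give $\mathbb E$-integrability of $\int_t^\tau|\hat f|\,dr$ and of $|\tilde V^i(\tau,X_\tau)|$, uniformly over the controls used.

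The two inequalities are proved separately. For ``$\le$'', fix $\pi^i$ and condition on $\mathscr F_\tau$. By the flow (strong Markov) property of the SDE \eqref{eq exploratory state}, the conditional law of the post-$\tau$ path given $\mathscr F_\tau$ is that of the system started at $(\tau,X_\tau)$ under the shifted policy. Splitting $\tilde J^i$ at $\tau$ and bounding the conditional continuation payoff by $\tilde V^i(\tau,X_\tau;\pi^{-i})$ gives $\tilde J^i(t,x;\pi^i,\pi^{-i})$ at most the right-hand side of \eqref{eq:DPP_stopping}. Taking the supremum over $\pi^i$ then yields the inequality. Because the opponents' policies are Markov feedback maps of $(r,y)$, they shift correctly, and no strategy concatenation is needed on their side.

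For ``$\ge$'', fix $\varepsilon>0$ and any $\pi^i$. Continuity of $\tilde V^i$ and of $y\mapsto\tilde J^i(s,y;\cdot)$ provide a countable partition $\{B_k\}$ of $[t,T]\times\mathbb R^N$ into small Borel cells together with $\varepsilon$-optimal controls $\pi^{i,k}$ such that $\tilde J^i(s,y;\pi^{i,k},\pi^{-i})\ge\tilde V^i(s,y;\pi^{-i})-3\varepsilon$ for all $(s,y)\in B_k$. The continuity of $\tilde J^i$ in $y$ for a fixed policy comes from the Lipschitz estimates on $\tilde b,\tilde\sigma$ and standard $L^2$ stability. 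Next, concatenate: use $\pi^i$ on $[t,\tau)$ and $\pi^{i,k}$ afterwards on the event $\{(\tau,X_\tau)\in B_k\}$. The stable-closure hypothesis in the statement is precisely what guarantees that this concatenated control is admissible. Computing its payoff by the same conditioning argument gives
\[
\tilde V^i(t,x;\pi^{-i})\ge\mathbb E\Bigl[\int_t^\tau e^{-\beta^i(r-t)}\hat f\,dr+e^{-\beta^i(\tau-t)}\tilde V^i(\tau,X_\tau)\Bigr]-3\varepsilon.
\]
Infinitely many cells are handled by truncating to a compact set, using the moment bound to make the tail contribution small. Letting $\varepsilon\to0$ and then taking the supremum over $\pi^i$ gives ``$\ge$''. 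Setting $\tau\equiv s$ yields \eqref{eq:DPP_deterministic}.

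The main obstacle is the measurable-selection and concatenation step in the ``$\ge$'' direction. Admissibility in Definition~\ref{def:admissible_policy} requires Markov feedback policies with Lipschitz regularity in $x$, so the glued control is generally not in $\Pi$ itself. This is why the statement assumes the stable closure. I would carry out the argument in the weak-DPP framework of Bouchard--Touzi, which uses only continuity of $\tilde V^i$ and stability under concatenation. The remaining work is to verify the uniform integrability of the entropy term, so that the countable partition argument passes to the limit.
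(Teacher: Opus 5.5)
Your proposal follows the paper's route. With $\pi^{-i}$ frozen, the best-response problem is an ordinary single-agent relaxed control problem, and the DPP is the Bouchard--Touzi weak DPP: continuity of $\tilde V^i$ collapses the semicontinuous envelopes, the stable-closure hypothesis supplies the concatenations, and $\tau\equiv s$ gives the deterministic-time case. The paper simply cites that theorem, after augmenting the state by the accumulated discounted reward, rather than sketching the conditioning and countable-covering steps as you do. If you write those steps out, note that the covering needs lower semicontinuity of $(s,y)\mapsto\tilde J^i(s,y;\pi^i,\pi^{-i})$, and $L^2$ stability of the state alone does not give it: Definition~\ref{def:admissible_policy} provides only $L^1$-type regularity of $\pi^i$ in $(t,x)$ plus growth bounds, and the entropy term $\mathcal E$ is not continuous under $L^1$ convergence of densities.
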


\begin{proof}
The claim is the continuous-value specialization of the weak dynamic programming principle of \cite[Thm.~3.1]{BouchardTouzi11}; see also \cite[Ch.~IV]{FlemSoner}.  The controlled exploratory diffusion is well posed as shown above, the admissible class is stable under deterministic concatenation and the stopping-time pastings used in the weak DPP, and the entropy running term is additive and integrable by Definition~\ref{def:admissible_policy}.  The upper and lower semicontinuous envelopes in the weak DPP therefore coincide with $\tilde V^i$.  Appendix~\ref{app:DPP} records the reduction to~\eqref{eq:DPP_stopping}.
\end{proof}

For $p\in\mathbb R^N$ and a symmetric matrix $A$, define
\begin{align}\label{eq Hamiltonian}
H^i(t,x,u,p,A) := \sum_{k=1}^N \big[b^k(t,x,u)p^k + \tfrac12\sigma^k(t,x,u)^2 A^{k,k}\big]+f^i(t,x,u^i),
\end{align}
and, for a fixed opponent profile,
\begin{align}\label{eq Hamiltonian tilde}
\tilde H^i(t,x,u^i,\pi^{-i},p,A)
:=\int_{U^{N-1}}H^i(t,x,u^i,u^{-i},p,A)\,\pi^{-i}(du^{-i};t,x).
\end{align}
Here $\pi^{-i}(du^{-i};t,x)=\bigotimes_{j\ne i}\pi^j(du^j;t,x)$.  Assuming a classical solution, the HJB equation is
\begin{align}\label{eq HJB 1}
0=&\;\partial_t\tilde V^i(t,x;\pi^{-i})-\beta^i\tilde V^i(t,x;\pi^{-i})\nonumber\\
&+\sup_{\mu^i\in\mathcal P(U)}\int_U\Big[\tilde H^i\big(t,x,u^i,\pi^{-i},\partial_x\tilde V^i,\partial_x^2\tilde V^i\big)-\gamma^i\log\mu^i(u^i)\Big]\mu^i(u^i)du^i.
\end{align}
Whenever the partition function is finite, the entropy variational formula gives the unique current best response
\begin{align}\label{eq:standard_gibbs_br}
\mathrm{BR}_i[\pi^{-i}](u^i;t,x)
=\frac{\exp\{\tilde H^i(t,x,u^i,\pi^{-i},\partial_x\tilde V^i,\partial_x^2\tilde V^i)/\gamma^i\}}
{\int_U\exp\{\tilde H^i(t,x,v^i,\pi^{-i},\partial_x\tilde V^i,\partial_x^2\tilde V^i)/\gamma^i\}\,dv^i}.
\end{align}

\begin{definition}[Standard exploratory Nash equilibrium]\label{def:standard_nash}
A profile $\boldsymbol\pi^*=(\pi^{1,*},\ldots,\pi^{N,*})\in\Pi^N$ is a standard exploratory Nash equilibrium if, for every $i$,
\[
\tilde J^i(t,x;\pi^{i,*},\pi^{-i,*})\geq \tilde J^i(t,x;\pi^i,\pi^{-i,*})
\quad\text{for all }\pi^i\in\Pi
\]
and every initial condition $(t,x)$.
\end{definition}

Equation~\eqref{eq HJB 1} does not imply a reduction to the opponents' current actions.  Although $\pi^{-i}$ appears explicitly in the Hamiltonian only through $\pi^{-i}(\cdot;t,x)$, the derivatives of $\tilde V^i(t,x;\pi^{-i})$ generally depend on the continuation policies on $[t,T]$.  Thus the standard Nash problem is a fixed point in complete policies.

\subsection{Continuation Assessment and the Time-Local Response}\label{subsec:assessment}

For the compatibility results in Sections~\ref{sec:equilibrium}--\ref{sec:ergodic}, we take scalar actions, $U\subset\mathbb R$.  The vector-action version replaces scalar mixed partials by the corresponding block-Jacobian symmetry conditions.

The model of strategic interaction studied in the remainder of the paper is the following.  Each player responds to the actions actually realized by the others at the current time, choosing a randomized action that is an entropy-regularized best response to the realized opponent profile.  No player can forecast how the others will act later, so the future is assessed with the opponents held at their realized actions.  This is the sense in which the model is short-sighted.  The player's own future rewards and the evolution of the state enter the assessment in full; only the opponents' future actions are not predicted.  We adopt this response rule as the model itself, not as an approximation to the Nash benchmark recorded above; the latter serves only to make precise what is, and what is not, being solved.  Two features organize the subsequent analysis.  Because the response conditions on realized actions rather than integrating against the opponents' continuation policies, the resulting family of conditional laws need not be consistent with any single joint law, and deciding when it is becomes the compatibility problem of Section~\ref{sec:equilibrium}.  The same conditional response is also the object that the learning sections identify.

The assessment is made precise as follows.  Fix $u^{-i}\in U^{N-1}$ and hold the opponents' actions at that value over the remaining horizon.  Player $i$ then faces an entropy-regularized single-agent control problem, with performance $\tilde J^i(t,x;\pi^i,u^{-i})$ and value $\tilde V^i(t,x;u^{-i})$.  The value solves
\begin{align}\label{eq HJB-explore}
0=&\;\partial_t\tilde V^i(t,x;u^{-i})-\beta^i\tilde V^i(t,x;u^{-i})\nonumber\\
&+\sup_{\pi^i\in\mathcal P(U)}\int_U\Big[H^i(t,x,u^i,u^{-i},\partial_x\tilde V^i,\partial_x^2\tilde V^i)-\gamma^i\log\pi^i(u^i;t,x)\Big]\pi^i(u^i;t,x)du^i,
\end{align}
or, after carrying out the inner maximization,
\begin{align}\label{eq explore HJB 2}
0=\partial_t\tilde V^i-\beta^i\tilde V^i+\gamma^i\log\int_U\exp\Big\{\frac1{\gamma^i}H^i(t,x,u^i,u^{-i},\partial_x\tilde V^i,\partial_x^2\tilde V^i)\Big\}du^i.
\end{align}
We write $v^i(t,x;u^{-i}):=\tilde V^i(t,x;u^{-i})$ and call $v=(v^1,\ldots,v^N)$ the continuation assessment.  It is a single-agent object, determined by the coefficients and by the value at which the opponents are held, with no reference to any equilibrium condition.  The assessment can therefore be computed before, and independently of, the compatibility question below.

Whenever classical regularity of~\eqref{eq HJB-explore} is invoked, it is assumed explicitly.  Under compact action sets and the usual uniform parabolicity and H\"older conditions, standard parabolic regularity provides the required $C^{1,2}$ solution; for noncompact $U$, the corresponding coercivity and integrability conditions are also required.

Fix $(t,x)$ and let the current action profile be $u=(u^1,\ldots,u^N)$.  The effect of future states enters through $\partial_xv^i$ and $\partial_x^2v^i$, evaluated at $(t,x;u^{-i})$, while the current action profile remains $u$.  Define
\begin{align}\label{eq:local_Hamiltonian}
H_v^i(t,x,u):=H^i\big(t,x,u,\partial_xv^i(t,x;u^{-i}),\partial_x^2v^i(t,x;u^{-i})\big).
\end{align}
For fixed $u^{-i}$, player $i$'s time-local response is the maximizer of
\[
\mu^i\longmapsto \int_U\big[H_v^i(t,x,u^i,u^{-i})-\gamma^i\log\mu^i(u^i)\big]\mu^i(u^i)du^i.
\]
The entropy variational formula gives
\begin{align}\label{eq optimal policy}
\pi^{i,v}_*(u^i;t,x,u^{-i})
:=\frac{\exp\{H_v^i(t,x,u^i,u^{-i})/\gamma^i\}}
{\int_U\exp\{H_v^i(t,x,z^i,u^{-i})/\gamma^i\}\,dz^i},
\end{align}
whenever the denominator is finite.  The assessment is indexed by the opponents' actions, because $v^i=\tilde V^i(\cdot;u^{-i})$ is evaluated at the profile on which the response is conditioned.  This dependence is what distinguishes the response from a Nash best response, and it is also what produces the indirect terms in the cross-partial decomposition of Section~\ref{sec:equilibrium}.

\subsection{Time-Local Conditional Equilibrium}

\begin{definition}[Time-local conditional equilibrium]\label{def:tlce}
Fix $(t,x)$ and the continuation assessment $v$.  A positive joint density $\psi^v(\cdot;t,x)$ on $U^N$ is a \emph{time-local conditional equilibrium} if, for every $i$, its full conditional density satisfies
\[
\psi^v(u^i\mid u^{-i};t,x)=\pi^{i,v}_*(u^i;t,x,u^{-i})
\quad\text{for a.e. }u^{-i}.
\]
\end{definition}

The response is defined relative to the actions currently realized by the others, not relative to an opponent continuation policy.  The question studied below is whether the $N$ locally specified conditional responses can be realized by one random action vector at the same $(t,x)$.

\begin{theorem}[Compatibility of the time-local responses]\label{thm:compatibility}
Let $\pi^{i,v}_*(u^i\mid u^{-i})$ be positive conditional densities, twice continuously differentiable on the interior of $U^N$, and assume the interior of $U^N$ is connected and simply connected.  The following are equivalent.
\begin{enumerate}
\item A time-local conditional equilibrium exists; equivalently, there is a positive integrable function $\psi$ such that
\begin{align}\label{eq:compat_joint}
\pi^{i,v}_*(u^i\mid u^{-i})=\frac{\psi(u)}{\int_U\psi(z^i,u^{-i})\,dz^i},\qquad i=1,\ldots,N.
\end{align}
\item There are positive functions $\phi^i:U^{N-1}\to(0,\infty)$ such that the common function
\[
\psi(u):=\pi^{i,v}_*(u^i\mid u^{-i})\phi^i(u^{-i})
\]
is independent of $i$ and satisfies $0<\int_{U^N}\psi(u)du<\infty$.
\item The cross-partial identities
\begin{align}\label{eq:conditional_cross}
\frac{\partial^2}{\partial u^j\partial u^i}\log\pi^{i,v}_*(u^i\mid u^{-i})
=\frac{\partial^2}{\partial u^i\partial u^j}\log\pi^{j,v}_*(u^j\mid u^{-j}),\qquad i\ne j,
\end{align}
hold, and a potential $\Phi$ of the closed one-form
\[
\omega:=\sum_{i=1}^N\partial_{u^i}\log\pi^{i,v}_*(u^i\mid u^{-i})\,du^i
\]
is normalizable: $0<\int_{U^N}e^{\Phi(u)}du<\infty$.
\end{enumerate}
When these conditions hold, the joint density is unique.
\end{theorem}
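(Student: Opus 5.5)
We will prove the cycle (1)$\Rightarrow$(2)$\Rightarrow$(3)$\Rightarrow$(1) and then derive uniqueness from the potential construction. Throughout, write $\ell^i(u):=\log\pi^{i,v}_*(u^i\mid u^{-i})$. This is a $C^2$ function on the interior of $U^N$ because the conditionals are positive and twice continuously differentiable.

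\emph{(1)$\Rightarrow$(2).} Given $\psi$ as in \eqref{eq:compat_joint}, set $\phi^i(u^{-i}):=\int_U\psi(z^i,u^{-i})\,dz^i$. These are the marginals of the other coordinates. They are positive because $\psi>0$, and they are finite for a.e.\ $u^{-i}$ by Fubini, since $\psi$ is integrable. Then $\pi^{i,v}_*\phi^i=\psi$ for every $i$, and $0<\int\psi<\infty$ holds by assumption.

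\emph{(2)$\Rightarrow$(3).} Take logarithms: $\log\psi=\ell^i+\log\phi^i(u^{-i})$. Since $\log\phi^i$ does not depend on $u^i$, we get $\partial_{u^i}\log\psi=\partial_{u^i}\ell^i$. The key point is that $\log\psi$ is $C^2$ in the interior. To see this, fix any $i$ and note that $\log\phi^i=\log\psi-\ell^i$ is independent of $u^i$. Using a second index $j\neq i$, the function $\log\phi^i(u^{-i})=\ell^j(u)+\log\phi^j(u^{-j})-\ell^i(u)$ inherits regularity in $u^j$ from $\ell^j-\ell^i$. Iterating over coordinates (or simply assuming enough smoothness) shows $\log\psi\in C^2$. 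Consequently $\omega=d\log\psi$ is exact, and Schwarz's theorem yields \eqref{eq:conditional_cross}. Moreover $\Phi=\log\psi$ is a potential with $\int e^\Phi=\int\psi\in(0,\infty)$. Since any two potentials differ by a constant on the connected interior, normalizability does not depend on which potential is chosen.

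\emph{(3)$\Rightarrow$(1).} The identities \eqref{eq:conditional_cross} state exactly that the $C^1$ one-form $\omega$ is closed, $\partial_{u^j}\omega_i=\partial_{u^i}\omega_j$. On a connected, simply connected open set the Poincar\'e lemma produces $\Phi$ with $d\Phi=\omega$, for instance $\Phi(u)=\int_{\gamma_{u_0\to u}}\omega$, which is path-independent. Set $\psi:=e^\Phi$; it is integrable by hypothesis. For each $i$ we have $\partial_{u^i}(\Phi-\ell^i)=0$. We then want to conclude that $\Phi-\ell^i=c^i(u^{-i})$, i.e.\ that it is a function of $u^{-i}$ alone.

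This step is the main obstacle. It requires each section $\{u^i:(u^i,u^{-i})\in\operatorname{int}U^N\}$ to be connected. Since $U^N$ is a product, the section is $\operatorname{int}U$ (or a product of interiors). Simple connectivity of the product forces each factor to be simply connected, hence an interval when $U\subset\mathbb R$, so the section is connected. With this, $\psi(u)=\pi^{i,v}_*(u^i\mid u^{-i})e^{c^i(u^{-i})}$. Dividing by $\int_U\psi(z^i,u^{-i})dz^i=e^{c^i(u^{-i})}\int_U\pi^{i,v}_*(z^i\mid u^{-i})\,dz^i=e^{c^i(u^{-i})}$, where the last equality uses that $\pi^{i,v}_*$ is a density, recovers \eqref{eq:compat_joint}. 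Normalizing $\psi$ then gives the positive joint density $\psi^v$ of Definition~\ref{def:tlce}. The boundary of $U^N$ has measure zero and can be ignored, because the definition only requires the conditionals to match a.e.

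\emph{Uniqueness.} Suppose $\psi_1,\psi_2$ are two normalized joint densities with these conditionals. By the argument for (2)$\Rightarrow$(3), both $\log\psi_1$ and $\log\psi_2$ are potentials of the same form $\omega$. Hence they differ by a constant on the connected interior, and normalization forces that constant to be zero.
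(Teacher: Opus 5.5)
Your proposal is correct and follows the paper's proof essentially step for step: the marginals $\phi^i=\int_U\psi(z^i,u^{-i})\,dz^i$, the decomposition $\log\psi=\ell^i+\log\phi^i$ with symmetry of mixed partials for necessity, the Poincar\'e potential $e^{\Phi}$ with $\partial_{u^i}(\Phi-\ell^i)=0$ for sufficiency, and the constant log-ratio argument for uniqueness; you only arrange these as one cycle rather than as $(1)\Leftrightarrow(2)$ and $(1)\Leftrightarrow(3)$. You make the regularity of $\log\psi$ and the connectedness of the $u^i$-sections explicit, where the paper leaves them implicit, and the regularity needs no iteration: every first partial $\partial_{u^i}\log\psi=\partial_{u^i}\ell^i$ exists and is $C^1$, so $\log\psi\in C^2$ directly.
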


\begin{proof}
$(1)\Rightarrow(2)$.  Put $\phi^i(u^{-i})=\int_U\psi(z^i,u^{-i})dz^i$.  Then \eqref{eq:compat_joint} gives $\psi=\pi^{i,v}_*\phi^i$ for every $i$, and integrability is inherited from the joint density.

$(2)\Rightarrow(1)$.  Let $\psi$ denote the common integrable function in (2) and normalize it by $Z=\int_{U^N}\psi(u)du$.  Since $\int_U\pi^{i,v}_*(u^i\mid u^{-i})du^i=1$,
\[
\int_U\psi(u)du^i=\phi^i(u^{-i}).
\]
The normalization by $Z$ cancels in the conditional ratio, so the density $Z^{-1}\psi$ has the prescribed full conditionals.

$(1)\Rightarrow(3)$.  From
\[
\log\psi(u)=\log\pi^{i,v}_*(u^i\mid u^{-i})+\log\phi^i(u^{-i})
\]
we obtain $\partial_{u^i}\log\psi=\partial_{u^i}\log\pi^{i,v}_*$.  Differentiating with respect to $u^j$ and interchanging mixed derivatives of $\log\psi$ gives \eqref{eq:conditional_cross}.  Taking $\Phi=\log\psi$ gives the normalizability condition.

$(3)\Rightarrow(1)$.  The identities \eqref{eq:conditional_cross} say that $\omega$ is closed.  By the Poincar\'e lemma, there is a $C^2$ function $\Phi$ such that $d\Phi=\omega$.  Let
\[
\widehat\psi(u)=Z^{-1}e^{\Phi(u)},\qquad Z=\int_{U^N}e^{\Phi(v)}dv<\infty.
\]
For fixed $u^{-i}$,
\[
\partial_{u^i}\log\widehat\psi(u)=\partial_{u^i}\Phi(u)=\partial_{u^i}\log\pi^{i,v}_*(u^i\mid u^{-i}).
\]
Hence $\log\widehat\psi(u)-\log\pi^{i,v}_*(u^i\mid u^{-i})$ is independent of $u^i$.  Thus
\[
\widehat\psi(u)=c_i(u^{-i})\pi^{i,v}_*(u^i\mid u^{-i}).
\]
Integrating with respect to $u^i$ shows $c_i(u^{-i})=\int_U\widehat\psi(z^i,u^{-i})dz^i$, which is exactly \eqref{eq:compat_joint}.

Finally, if two positive joint densities have the same full conditionals, the derivative of the logarithm of their ratio with respect to every coordinate $u^i$ is zero.  Connectedness implies that the ratio is constant, and normalization makes it one.
\end{proof}

\begin{remark}\label{rmk:simply_connected}
On compact $U^N$, the normalizability condition in part~(3) is automatic for continuous $\Phi$.  On a non-simply-connected domain, the cross-partial identities give only local exactness; global compatibility requires vanishing periods of $\omega$ along closed loops.
\end{remark}

\section{Compatibility Analysis}\label{sec:equilibrium}

At a fixed $(t,x)$, the time-local responses are the Gibbs densities~\eqref{eq optimal policy}.  The compatibility theorem above turns their joint realizability into a condition on the current-action Hamiltonians.

\subsection{Hamiltonian Cross-Partial Criterion}

\begin{align}\label{eq:cross_partial_q}
\frac1{\gamma^i}\frac{\partial^2 H_v^i}{\partial u^j\partial u^i}(t,x,u)
=\frac1{\gamma^j}\frac{\partial^2 H_v^j}{\partial u^i\partial u^j}(t,x,u),\qquad i\ne j.
\end{align}

\begin{theorem}[Hamiltonian form of compatibility]\label{thm:compat_q}
Assume the hypotheses of Theorem~\ref{thm:compatibility}.  The time-local conditional responses~\eqref{eq optimal policy} admit a joint realization if and only if~\eqref{eq:cross_partial_q} holds and the resulting potential is normalizable.  When $U$ is compact, normalizability is automatic.
\end{theorem}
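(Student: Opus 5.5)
The plan is to reduce Theorem~\ref{thm:compat_q} to part~(3) of Theorem~\ref{thm:compatibility} by computing the log-conditional derivatives of the Gibbs responses~\eqref{eq optimal policy} explicitly. From~\eqref{eq optimal policy},
\[
\log\pi^{i,v}_*(u^i\mid u^{-i})=\frac1{\gamma^i}H_v^i(t,x,u^i,u^{-i})-\log Z^i(u^{-i}),\qquad Z^i(u^{-i}):=\int_U\exp\{H_v^i(t,x,z^i,u^{-i})/\gamma^i\}\,dz^i.
\]
The normalizer $Z^i$ does not depend on $u^i$. Hence $\partial_{u^i}\log\pi^{i,v}_*=\gamma_i^{-1}\partial_{u^i}H_v^i$, and
\[
\partial^2_{u^ju^i}\log\pi^{i,v}_*=\gamma_i^{-1}\partial^2_{u^ju^i}H_v^i .
\]
Substituting this into~\eqref{eq:conditional_cross} gives exactly~\eqref{eq:cross_partial_q}. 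The one-form $\omega$ of Theorem~\ref{thm:compatibility}(3) becomes $\sum_i\gamma_i^{-1}\partial_{u^i}H_v^i\,du^i$, which is the entropy-scaled action gradient field. Its potential $\Phi$, obtained via the Poincar\'e lemma, is the ``resulting potential'' in the statement. The equivalence (1)$\Leftrightarrow$(3) of Theorem~\ref{thm:compatibility} then yields the iff.

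The only technical point is the regularity needed to make this legitimate, and it is the step I expect to need the most care. Theorem~\ref{thm:compatibility} requires the conditionals to be positive and $C^2$ on the interior, and the differentiation above also requires $Z^i$ to be finite and differentiable in $u^{-i}$. Positivity is automatic from the exponential form once $Z^i<\infty$. For the $C^2$ requirement, the statement already says we assume the hypotheses of Theorem~\ref{thm:compatibility}, so $C^2$ of $\pi^{i,v}_*$ is given. We only need to note that $\log\pi^{i,v}_*+\log Z^i=H_v^i/\gamma^i$ with $\log Z^i$ independent of $u^i$. Consequently the mixed partial $\partial_{u^j}\partial_{u^i}$ of $\log\pi^{i,v}_*$ equals that of $H_v^i/\gamma^i$: differentiating first in $u^i$ kills $\log Z^i$, so no differentiability of $Z^i$ in $u^j$ is ever used. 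I would order the derivatives this way, with $u^i$ first, to avoid any dominated-convergence argument for $Z^i$.

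Finally, for compact $U$, the potential $\Phi$ is $C^2$ and hence continuous on the compact set $U^N$. Strictly, $\Phi$ lives on the interior, but it is bounded there if $H_v^i$ extends continuously. Therefore $0<\int_{U^N}e^\Phi<\infty$, which is Remark~\ref{rmk:simply_connected}. So normalizability is automatic in that case, which completes the proof.
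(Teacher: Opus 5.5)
Your proposal is correct and is essentially the paper's proof: write $\log\pi^{i,v}_*=H_v^i/\gamma^i-\log Z^i(u^{-i})$, differentiate in $u^i$ first so the normalizer drops out, and substitute into part~(3) of Theorem~\ref{thm:compatibility}, noting that $\omega=\omega_v$. Your caveat in the compact case is a fair refinement of a point the paper leaves implicit, and it is easily justified: since $\Phi-H_v^k/\gamma^k$ is independent of $u^k$ on the box $(\operatorname{int}U)^N$, changing one coordinate at a time gives $|\Phi(u)-\Phi(u_0)|\le 2\sum_k\sup|H_v^k|/\gamma^k$, so boundedness of the $H_v^k$ suffices.
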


\begin{proof}
For fixed $(t,x,u^{-i})$,
\[
\log\pi^{i,v}_*(u^i\mid u^{-i})
=\frac1{\gamma^i}H_v^i(t,x,u)-\log\int_U\exp\{H_v^i(t,x,z^i,u^{-i})/\gamma^i\}dz^i.
\]
The logarithm of the normalizing factor depends on $u^{-i}$ but not on $u^i$.  Therefore
\[
\partial_{u^i}\log\pi^{i,v}_*=\frac1{\gamma^i}\partial_{u^i}H_v^i,
\]
and hence, for $j\ne i$,
\[
\partial_{u^j}\partial_{u^i}\log\pi^{i,v}_*
=\frac1{\gamma^i}\partial_{u^j}\partial_{u^i}H_v^i.
\]
Substitution in Theorem~\ref{thm:compatibility} gives~\eqref{eq:cross_partial_q}.  If the condition holds, the closed one-form
\[
\omega_v:=\sum_{i=1}^N\frac1{\gamma^i}\partial_{u^i}H_v^i(t,x,u)\,du^i
\]
has a potential $\Phi_v$.  The compatible density is $Z_v^{-1}e^{\Phi_v}$ whenever $Z_v=\int_{U^N}e^{\Phi_v(u)}du<\infty$.
\end{proof}

\begin{proposition}[Entropy-weighted potential structure]\label{prop:weighted_potential}
Condition~\eqref{eq:cross_partial_q} holds if and only if, on a simply connected action domain, there is a scalar function $\Phi_v$ such that
\begin{align}\label{eq:weighted_potential}
\partial_{u^i}\Phi_v(t,x,u)=\frac1{\gamma^i}\partial_{u^i}H_v^i(t,x,u),\qquad i=1,\ldots,N.
\end{align}
If $\gamma^1=\cdots=\gamma^N=\gamma$ and the time-slice Hamiltonian game is an exact potential game with potential $P_v$, then $\Phi_v=P_v/\gamma$ and the compatible joint law is the Gibbs density $\psi^v\propto e^{P_v/\gamma}$.
\end{proposition}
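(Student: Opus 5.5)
The plan is to treat the statement as the Poincaré-lemma reformulation already used in the proof of Theorem~\ref{thm:compat_q}. At fixed $(t,x)$, introduce the one-form
\[
\omega_v=\sum_{i=1}^N\frac1{\gamma^i}\partial_{u^i}H_v^i(t,x,u)\,du^i
\]
on the interior of $U^N$. Its coefficients are $C^1$ under the regularity assumed in Theorem~\ref{thm:compatibility}, because $\frac1{\gamma^i}\partial_{u^i}H_v^i=\partial_{u^i}\log\pi^{i,v}_*$ and $\log\pi^{i,v}_*$ is $C^2$. Condition~\eqref{eq:cross_partial_q} says exactly that $d\omega_v=0$: the $du^j\wedge du^i$ coefficient of $d\omega_v$ is
\[
\frac1{\gamma^i}\partial_{u^j}\partial_{u^i}H_v^i-\frac1{\gamma^j}\partial_{u^i}\partial_{u^j}H_v^j .
\]

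\emph{Sufficiency of \eqref{eq:cross_partial_q}.} Assume \eqref{eq:cross_partial_q} and that the domain is simply connected. The Poincaré lemma, in its global form for simply connected domains, gives a $C^2$ function $\Phi_v$ with $d\Phi_v=\omega_v$. Concretely, take $\Phi_v(u)=\int_{\gamma_{u_0\to u}}\omega_v$ along any piecewise $C^1$ path from a fixed base point $u_0$. This is path-independent because closed loops are null-homotopic and $\omega_v$ is closed, via Stokes/homotopy invariance of line integrals. Reading off components of $d\Phi_v=\omega_v$ gives \eqref{eq:weighted_potential}.

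\emph{Necessity of \eqref{eq:cross_partial_q}.} If \eqref{eq:weighted_potential} holds with $\Phi_v\in C^2$, then
\[
\frac1{\gamma^i}\partial_{u^j}\partial_{u^i}H_v^i=\partial_{u^j}\partial_{u^i}\Phi_v=\partial_{u^i}\partial_{u^j}\Phi_v=\frac1{\gamma^j}\partial_{u^i}\partial_{u^j}H_v^j
\]
by Schwarz's theorem. Simple connectivity is not needed in this direction. The only care point is regularity: $\Phi_v$ inherits $C^2$ from the $C^1$ coefficients of $\omega_v$, so symmetry of mixed partials is justified.

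\emph{Potential games.} Suppose $\gamma^i\equiv\gamma$ and the time-slice game with payoffs $u\mapsto H_v^i(t,x,u)$ is an exact potential game with potential $P_v$. In the differentiable setting this means $\partial_{u^i}H_v^i=\partial_{u^i}P_v$ for every $i$. If the definition is given via payoff differences under unilateral deviations, $H_v^i(u^i,u^{-i})-H_v^i(w^i,u^{-i})=P_v(u^i,u^{-i})-P_v(w^i,u^{-i})$, then differentiating in $u^i$ gives the same identity. Then $\Phi_v:=P_v/\gamma$ satisfies \eqref{eq:weighted_potential}. Any two potentials differ by a constant on a connected domain, so $\Phi_v=P_v/\gamma$ up to an additive constant that is absorbed into normalization.

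By Theorem~\ref{thm:compat_q}, equivalently the construction $(3)\Rightarrow(1)$ of Theorem~\ref{thm:compatibility}, the compatible density is $Z_v^{-1}e^{\Phi_v}=Z_v^{-1}e^{P_v/\gamma}$, provided $Z_v<\infty$. On compact $U$ this is automatic; otherwise it must be kept as a standing assumption. Uniqueness of the joint law follows from the uniqueness clause of Theorem~\ref{thm:compatibility}.

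The main obstacle is interpretive rather than technical. One has to fix the meaning of "exact potential game" for the continuous time-slice game, using the difference or the gradient form. One also has to flag that normalizability of $e^{P_v/\gamma}$ is an extra hypothesis when $U$ is noncompact. I would state both explicitly.
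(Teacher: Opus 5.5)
Your proposal is correct and follows essentially the same route as the paper's proof: read \eqref{eq:cross_partial_q} as closedness of $\omega_v$, obtain $\Phi_v$ from the Poincar\'e lemma on the simply connected domain, get the converse from symmetry of mixed partials of $\Phi_v$, and in the equal-weight exact-potential case note that $\partial_{u^i}(P_v/\gamma)=\gamma^{-1}\partial_{u^i}H_v^i$, so the joint-density construction of Theorem~\ref{thm:compat_q} gives $\psi^v\propto e^{P_v/\gamma}$. Your remarks that $\Phi_v=P_v/\gamma$ holds up to an additive constant, and that normalizability of $e^{P_v/\gamma}$ is an extra hypothesis when $U$ is noncompact, make explicit what the paper leaves implicit in that construction.
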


\begin{proof}
Equation~\eqref{eq:cross_partial_q} is exactly the closedness condition for the one-form $\omega_v$ in the proof of Theorem~\ref{thm:compat_q}.  Poincar\'e's lemma gives~\eqref{eq:weighted_potential}.  Conversely, mixed derivatives of $\Phi_v$ commute, which gives~\eqref{eq:cross_partial_q}.  In the equal-temperature potential case, $\partial_{u^i}(P_v/\gamma)=\gamma^{-1}\partial_{u^i}H_v^i$, so the last claim follows from the construction of the joint density.
\end{proof}

\begin{remark}[Relation to Gibbs and potential games]\label{rmk:potential_gibbs}
The probabilistic part of the construction is the compatibility problem for full conditional distributions \cite{ArnoldPress89,ArnoldCastilloSarabia92}.  Its game-theoretic interpretation is close to Gibbs-equilibrium models based on conditional choice laws \cite{Blume93,BlumeDurlauf03}.  Proposition~\ref{prop:weighted_potential} identifies the additional structure imposed here by the HJB Hamiltonians and connects it with potential games \cite{MondererShapley96}.  Although a compatible law may be correlated, the condition is not the correlated-equilibrium incentive condition: the direction of conditioning is different, as noted in the introduction.
\end{remark}

\subsection{Structure of the Cross-Partial}

The mixed derivative in~\eqref{eq:cross_partial_q} differentiates $H_v^i$ in the own action $u^i$ and an opponent action $u^j$.  Since the assessment $v^i=\tilde V^i(\cdot;u^{-i})$ is indexed by the opponents' actions, the derivative has two sources: the explicit dependence of the coefficients on the action profile, and the dependence carried by the continuation value.  The decomposition below separates them.  Its indirect part is present precisely because the assessment is computed with the opponents held at their realized actions.

\begin{proposition}[Cross-partial decomposition]\label{prop:decomposition}
For $j\ne i$, assume the classical regularity used in Section~\ref{subsec:assessment} and that $\tilde V^i$ is $C^1$ in $u^{-i}$.  For the optimal $q$-function
\[
q^i_*=\partial_t\tilde V^i+H^i(t,x,u,\partial_x\tilde V^i,\partial_x^2\tilde V^i)-\beta^i\tilde V^i,
\]
we have
\begin{multline}\label{eq:cross_partial_decomp}
\frac{\partial^2 q^i_*}{\partial u^j\partial u^i}
=\underbrace{\sum_{k=1}^N\left[
\frac{\partial^2b^k}{\partial u^j\partial u^i}(\nabla_x\tilde V^i)^k
+\left(\frac{\partial\sigma^k}{\partial u^i}\frac{\partial\sigma^k}{\partial u^j}
+\sigma^k\frac{\partial^2\sigma^k}{\partial u^j\partial u^i}\right)(D_x^2\tilde V^i)^{k,k}
\right]}_{\text{direct effect}}\\
+\underbrace{\sum_{k=1}^N\left[
\frac{\partial b^k}{\partial u^i}\frac{\partial(\nabla_x\tilde V^i)^k}{\partial u^j}
+\sigma^k\frac{\partial\sigma^k}{\partial u^i}\frac{\partial(D_x^2\tilde V^i)^{k,k}}{\partial u^j}
\right]}_{\text{indirect effect}}.
\end{multline}
\end{proposition}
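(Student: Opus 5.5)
The identity is a direct differentiation of $q^i_*$ in $u^i$ and then in $u^j$. The plan is to keep track of which factors depend on which actions, and to justify interchanging the derivatives.

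First record the dependence structure. In
\[
q^i_*(t,x,u)=\partial_t\tilde V^i(t,x;u^{-i})+H^i\big(t,x,u,\partial_x\tilde V^i(t,x;u^{-i}),\partial_x^2\tilde V^i(t,x;u^{-i})\big)-\beta^i\tilde V^i(t,x;u^{-i}),
\]
the terms $\partial_t\tilde V^i$ and $\beta^i\tilde V^i$ depend on $u$ only through $u^{-i}$. Hence they are annihilated by $\partial_{u^i}$, and no regularity of them in $u^{-i}$ beyond existence is needed. Similarly, $f^i(t,x,u^i)$ depends only on $u^i$, so $\partial_{u^j}\partial_{u^i}f^i=0$ for $j\ne i$. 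So only the sum $\sum_k[b^k p^k+\tfrac12(\sigma^k)^2A^{k,k}]$ matters, with $p=\nabla_x\tilde V^i(t,x;u^{-i})$ and $A=D_x^2\tilde V^i(t,x;u^{-i})$.

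Second, differentiate in $u^i$. Since $p$ and $A$ are independent of $u^i$, the derivative falls only on the coefficients:
\[
\partial_{u^i}q^i_*=\sum_{k}\Big[\partial_{u^i}b^k\,p^k+\sigma^k\partial_{u^i}\sigma^k\,A^{k,k}\Big]+\partial_{u^i}f^i.
\]
Third, apply $\partial_{u^j}$ and use the product rule. When the derivative falls on $\partial_{u^i}b^k$ and on $\sigma^k\partial_{u^i}\sigma^k$, it produces $\partial^2_{u^ju^i}b^k\,p^k$ and $(\partial_{u^j}\sigma^k\partial_{u^i}\sigma^k+\sigma^k\partial^2_{u^ju^i}\sigma^k)A^{k,k}$; this is the direct effect. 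When it falls on $p^k$ and $A^{k,k}$, it produces $\partial_{u^i}b^k\,\partial_{u^j}(\nabla_x\tilde V^i)^k$ and $\sigma^k\partial_{u^i}\sigma^k\,\partial_{u^j}(D_x^2\tilde V^i)^{k,k}$; this is the indirect effect. The term $\partial_{u^j}\partial_{u^i}f^i$ vanishes. Together these give \eqref{eq:cross_partial_decomp}.

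The main obstacle is justifying that $\nabla_x\tilde V^i$ and $D_x^2\tilde V^i$ are differentiable in $u^j$. The stated hypothesis that $\tilde V^i$ is $C^1$ in $u^{-i}$ alone does not directly give this. I would read the hypothesis, as the statement implicitly does, to include differentiability of these $x$-derivatives in $u^{-i}$. I would support this reading by noting that $w=\partial_{u^j}\tilde V^i$ formally solves the linearization of \eqref{eq explore HJB 2}. That is a linear uniformly parabolic equation, by Assumption~\ref{assumption regularity}, with the Gibbs-averaged coefficients $\int\partial_{u^j}H^i\,\pi^{i,v}_*du^i$ as source. Schauder estimates would then put $w$ in $C^{1+\alpha/2,2+\alpha}$, so that $\partial_{u^j}$ commutes with $\partial_x$ and $\partial_x^2$. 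The remaining regularity is routine: the coefficient derivatives exist and are continuous because $b^k,\sigma^k\in C^{1,2+\alpha,2}$.
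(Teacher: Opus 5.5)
Your proposal is correct and follows the paper's proof: you differentiate $q^i_*$ first in $u^i$, which removes $\partial_t\tilde V^i$ and $\beta^i\tilde V^i$ because they depend only on $u^{-i}$. You then differentiate in $u^j$ by the product rule, with $\partial_{u^i}f^i$ dropping out, splitting the result into coefficient derivatives (direct effect) and derivatives of the frozen jets $\nabla_x\tilde V^i$, $D_x^2\tilde V^i$ (indirect effect). You are also right that $C^1$ dependence of $\tilde V^i$ on $u^{-i}$ does not by itself give $u^j$-differentiability of these jets; the paper's proof assumes this without comment, and your linearization-plus-Schauder sketch is a reasonable way to supply it.
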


\begin{proof}
Differentiate the definition of $q^i_*$ first with respect to $u^i$.  The terms $\partial_t\tilde V^i$ and $-\beta^i\tilde V^i$ do not depend on $u^i$, because $u^{-i}$ is the frozen parameter.  Hence
\[
\frac{\partial q^i_*}{\partial u^i}
=\sum_{k=1}^N\left[
\frac{\partial b^k}{\partial u^i}(\nabla_x\tilde V^i)^k
+\sigma^k\frac{\partial\sigma^k}{\partial u^i}(D_x^2\tilde V^i)^{k,k}
\right]+\frac{\partial f^i}{\partial u^i}.
\]
Now differentiate with respect to $u^j$, $j\ne i$.  The term $\partial f^i/\partial u^i$ is independent of $u^j$.  Applying the product rule to the remaining terms gives the derivatives of the coefficients displayed in the first line of~\eqref{eq:cross_partial_decomp} and the derivatives of the frozen-value jets displayed in the second line.  This is exactly~\eqref{eq:cross_partial_decomp}.
\end{proof}

\subsection{Special Cases}

\begin{theorem}[Decoupled dynamics]\label{thm:decoupled}
Suppose $b^k(t,x,u)=b^k(t,x^k,u^k)$ and $\sigma^k(t,x,u)=\sigma^k(t,x^k,u^k)$ for every $k$, and $f^i(t,x,u^i)$ depends only on $u^i$ in the action variable.  Then the time-local conditional response of player $i$ is independent of $u^{-i}$, and a time-local conditional equilibrium exists.  Its joint density is the product of the individual Gibbs responses.
\end{theorem}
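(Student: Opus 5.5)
The plan is to show that, under decoupled dynamics, the Gibbs response~\eqref{eq optimal policy} of player $i$ depends on $u^{-i}$ only through the $x^i$-jet of the frozen assessment. I would then argue that this jet does not see $u^{-i}$, and conclude with Theorem~\ref{thm:compatibility} (or directly).

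\emph{Step 1: reduce the Gibbs exponent.} Write $H_v^i(t,x,u)$ from~\eqref{eq:local_Hamiltonian} with the decoupled coefficients. It splits as
\[
H_v^i=\Big[b^i(t,x^i,u^i)\,\partial_{x^i}v^i+\tfrac12\sigma^i(t,x^i,u^i)^2\,\partial^2_{x^ix^i}v^i+f^i(t,x,u^i)\Big]+R^i(t,x,u^{-i}),
\]
where $R^i=\sum_{k\ne i}\big[b^k(t,x^k,u^k)\partial_{x^k}v^i+\tfrac12(\sigma^k)^2\partial^2_{x^kx^k}v^i\big]$ does not involve $u^i$. Since $R^i$ is constant in $u^i$, it cancels between numerator and denominator of~\eqref{eq optimal policy}. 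The response is therefore a Gibbs density in $u^i$ whose exponent depends on $u^{-i}$ only through $\partial_{x^i}v^i(t,x;u^{-i})$ and $\partial^2_{x^ix^i}v^i(t,x;u^{-i})$. Equivalently, in Proposition~\ref{prop:decomposition} the direct effect vanishes identically, because $\partial_{u^j}\partial_{u^i}b^k=0$ and $\partial_{u^i}\sigma^k\,\partial_{u^j}\sigma^k=0$ for $i\ne j$.

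\emph{Step 2: kill the indirect effect (main obstacle).} It remains to show that $\partial_{x^i}v^i$ and $\partial^2_{x^ix^i}v^i$ are independent of $u^{-i}$. In the frozen problem defining $v^i=\tilde V^i(\cdot;u^{-i})$, decoupling means two things. First, $X^i$ is driven only by $\pi^i$ and $W^i$. Second, each $X^k$, $k\ne i$, is an uncontrolled diffusion driven by the frozen $u^k$ and the independent $W^k$. I would read the hypothesis on $f^i$ (and on $g^i$) as saying that player $i$'s reward sees the state only through $x^i$, or more generally is additively separable, $f^i=f^{i}_1(t,x^i,u^i)+f^i_2(t,x^{-i})$ and $g^i=g^i_1(x^i)+g^i_2(x^{-i})$. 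Under this reading, the ansatz $v^i(t,x;u^{-i})=w^i(t,x^i)+h^i(t,x^{-i};u^{-i})$ separates the exploratory HJB~\eqref{eq explore HJB 2}. The function $w^i$ solves player $i$'s own one-dimensional exploratory HJB, free of $u^{-i}$. The function $h^i$ solves a linear Kolmogorov equation for the frozen opponents' states, with running term $f^i_2$ and terminal term $g^i_2$. By uniqueness of classical solutions (the regularity assumed in Section~\ref{subsec:assessment}), this is the assessment, and $\partial_{x^i}v^i=\partial_{x^i}w^i$, $\partial^2_{x^ix^i}v^i=\partial^2_{x^ix^i}w^i$.

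This is the step I expect to be delicate. If $f^i$ or $g^i$ genuinely couples $x^i$ with $x^{-i}$ non-additively, the mixed terms propagate $u^{-i}$ into $\partial_{x^i}v^i$, and the indirect term of~\eqref{eq:cross_partial_decomp} need not vanish. I would therefore make the separability reading explicit at this point.

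\emph{Step 3: existence and product form.} With $\pi^{i,v}_*(u^i;t,x)\propto\exp\{[b^i\partial_{x^i}w^i+\tfrac12(\sigma^i)^2\partial^2_{x^ix^i}w^i+f^i_1]/\gamma^i\}$ independent of $u^{-i}$, set $\psi(u)=\prod_j\pi^{j,v}_*(u^j;t,x)$. This is a positive density, normalizable because each factor has a finite partition function. Its full conditional in $u^i$ is $\psi/\int_U\psi\,dz^i=\pi^{i,v}_*(u^i;t,x)$, so Definition~\ref{def:tlce} holds directly. Alternatively, the cross-partials~\eqref{eq:cross_partial_q} are all zero, and Theorem~\ref{thm:compat_q} applies with $\Phi_v=\sum_i\log\pi^{i,v}_*$. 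Uniqueness in Theorem~\ref{thm:compatibility} then identifies the equilibrium with this product.
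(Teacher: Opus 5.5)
Your proof is correct under the separability hypothesis you make explicit, and it follows the paper's skeleton. The $u^i$-free part of $H_v^i$ drops out of the normalization in~\eqref{eq optimal policy}. The value jets multiplying $u^i$-dependent coefficients are shown not to depend on $u^{-i}$. The product of the resulting Gibbs factors then realizes all full conditionals.

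The difference lies in your Step~2. The paper handles it in one line: it asserts that the frozen HJB~\eqref{eq HJB-explore} ``contains neither $u^{-i}$ nor $x^{-i}$'', so $\tilde V^i$ itself is independent of $u^{-i}$. That holds only if $f^i$ and $g^i$ depend on the state through $x^i$ alone. In that case $\tilde V^i=\tilde V^i(t,x^i)$, and the terms $b^k(t,x^k,u^k)\partial_{x^k}\tilde V^i$ with $k\ne i$ vanish. This hypothesis is not in the statement.

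You make the needed condition explicit and work under the weaker additive separability. There $\tilde V^i$ may depend on $u^{-i}$ through $h^i(t,x^{-i};u^{-i})$, but only the $x^i$-jet enters the Gibbs exponent. This isolates exactly what the theorem needs.

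Your caution is warranted: without some such condition the statement is false. Take $N=2$, $U=\mathbb R$, $b^k=u^k$, $\sigma^k=1$, $f^k=-(u^k)^2$, $\beta^1=0$, $g^1(x)=x^1x^2$ and $g^2\equiv0$. Then $\partial_{x^1}\tilde V^1(t,x;u^2)=x^2+u^2(T-t)$ and $\tilde V^2=\tilde V^2(t)$. Hence $\partial_{u^2}\partial_{u^1}H_v^1=T-t$ while $\partial_{u^1}\partial_{u^2}H_v^2=0$. Player~1's response then depends on $u^2$, and Theorem~\ref{thm:compat_q} rules out a time-local conditional equilibrium for every $t<T$.

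The cost of your route is that identifying $w^i+h^i$ with the assessment requires existence for the two split equations and uniqueness (or a verification theorem) for the frozen HJB. The paper's one-line argument relies on the same uniqueness tacitly.
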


\begin{proof}
Under the stated structure the frozen HJB~\eqref{eq HJB-explore} for player $i$ contains neither $u^{-i}$ nor $x^{-i}$, so $\tilde V^i$ does not depend on $u^{-i}$ and $\partial_{u^j}\partial_x\tilde V^i=0$ for $j\ne i$.  Hence $H_v^i(t,x,u)$ is a sum of terms, each depending on a single action coordinate.  All terms involving $u^j$, $j\ne i$, are independent of $u^i$ and therefore cancel from the Gibbs normalization in~\eqref{eq optimal policy}.  Hence $\pi^{i,v}_*(u^i\mid u^{-i})=\pi^{i,v}_*(u^i)$.  The product $\prod_i\pi^{i,v}_*(u^i)$ has these full conditionals and is therefore a time-local conditional equilibrium.
\end{proof}

\begin{remark}[Symmetry is not sufficient]\label{rmk:symmetry}
Permutation symmetry alone does not imply compatibility.  For example, on $U=[-1,1]$ with equal exploration weights, the smooth symmetric pair
\[
H^1(u^1,u^2)=u^1(u^2)^2,\qquad H^2(u^1,u^2)=u^2(u^1)^2
\]
satisfies $H^2(u^1,u^2)=H^1(u^2,u^1)$, but
$\partial_{u^2}\partial_{u^1}H^1=2u^2$ and
$\partial_{u^1}\partial_{u^2}H^2=2u^1$.  Thus an additional potential condition is needed.  The example is a statement about the criterion itself; whether a given pair of Hamiltonians arises from an assessment of the present model is a separate question.
\end{remark}

\begin{theorem}[$N=2$ construction]\label{thm:N2}
Let $N=2$, $U\subset\mathbb R$ be compact, convex, and contain the origin.  If $H_v^1,H_v^2\in C^2(U^2)$, then a time-local conditional equilibrium exists if and only if
\begin{align}\label{eq:compat_N2}
\frac1{\gamma^1}\frac{\partial^2H_v^1}{\partial u^2\partial u^1}
=\frac1{\gamma^2}\frac{\partial^2H_v^2}{\partial u^1\partial u^2}.
\end{align}
When this holds, its joint density is proportional to
\begin{align*}
\exp\left\{\int_0^{u^1}\frac1{\gamma^1}\frac{\partial H_v^1}{\partial v^1}(v^1,0)dv^1
+\int_0^{u^2}\frac1{\gamma^2}\frac{\partial H_v^2}{\partial v^2}(u^1,v^2)dv^2\right\}.
\end{align*}
\end{theorem}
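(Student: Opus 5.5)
The proof specializes Theorem~\ref{thm:compat_q} to $N=2$. The plan is to check its hypotheses, note that normalizability is automatic, and then build the potential by integrating along an explicit two-segment path.

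First, the hypotheses of Theorem~\ref{thm:compatibility} hold. Since $U\subset\mathbb R$ is compact and convex, it is an interval, so the interior of $U^2$ is a rectangle; it is connected and simply connected, indeed star-shaped. The Gibbs responses~\eqref{eq optimal policy} are positive. Because $H_v^i\in C^2(U^2)$ and $U$ is compact, each normalizing integral is finite and $C^2$ in the opponent's action (differentiation under the integral sign is allowed since the integrands and their derivatives are bounded). Hence the conditionals are $C^2$.

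Theorem~\ref{thm:compat_q} then says a time-local conditional equilibrium exists if and only if~\eqref{eq:cross_partial_q} holds and the potential is normalizable. For $N=2$, \eqref{eq:cross_partial_q} is exactly~\eqref{eq:compat_N2}. Normalizability is automatic here: a continuous potential on the compact set $U^2$ is bounded, so $0<\int e^{\Phi_v}<\infty$ (Remark~\ref{rmk:simply_connected}).

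For the explicit density, I define
\[
\Phi(u^1,u^2):=\int_0^{u^1}\tfrac1{\gamma^1}\partial_{v^1}H_v^1(v^1,0)\,dv^1+\int_0^{u^2}\tfrac1{\gamma^2}\partial_{v^2}H_v^2(u^1,v^2)\,dv^2.
\]
This is the line integral of $\omega_v$ along the path from $(0,0)$ to $(u^1,0)$ and then to $(u^1,u^2)$. Both segments stay in $U^2$ because $0\in U$ and $U$ is convex.

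It remains to check $d\Phi=\omega_v$. The $u^2$-derivative is immediate from the fundamental theorem of calculus: $\partial_{u^2}\Phi=\gamma_2^{-1}\partial_{u^2}H_v^2(u^1,u^2)$.

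For the $u^1$-derivative, I differentiate under the integral sign, which is justified by $C^2$ regularity on a compact set. This gives
\[
\partial_{u^1}\Phi=\tfrac1{\gamma^1}\partial_{u^1}H_v^1(u^1,0)+\int_0^{u^2}\tfrac1{\gamma^2}\partial_{u^1}\partial_{v^2}H_v^2(u^1,v^2)\,dv^2 .
\]
Using~\eqref{eq:compat_N2}, I replace the integrand by $\gamma_1^{-1}\partial_{v^2}\partial_{u^1}H_v^1(u^1,v^2)$. The integral then equals $\gamma_1^{-1}[\partial_{u^1}H_v^1(u^1,u^2)-\partial_{u^1}H_v^1(u^1,0)]$. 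The boundary term cancels, leaving $\partial_{u^1}\Phi=\gamma_1^{-1}\partial_{u^1}H_v^1$.

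So $\Phi$ is a potential of $\omega_v$. By the $(3)\Rightarrow(1)$ construction in Theorem~\ref{thm:compatibility} and the uniqueness statement there, the joint density is $e^{\Phi}/\int_{U^2}e^{\Phi}$.

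The main obstacle is modest: it is justifying the interchange of $\partial_{u^1}$ and the integral, and making sure the path stays in $U^2$. For the interchange, I would note that $H_v^i$ is $C^2$ up to the boundary; where a derivative is only one-sided at an endpoint of $U$, I would verify the identity on the interior and extend it by continuity.
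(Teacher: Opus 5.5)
Your proposal is correct and follows essentially the same route as the paper: necessity from Theorem~\ref{thm:compat_q}, and sufficiency from the two-segment path potential $\Phi$, with \eqref{eq:compat_N2} used to recover $\partial_{u^1}\Phi=\frac{1}{\gamma^1}\partial_{u^1}H_v^1$ and compactness of $U^2$ giving normalizability. Your additional checks (that the interior of $U^2$ is a rectangle, that the Gibbs conditionals are $C^2$, and that differentiation under the integral sign is justified) fill in hypotheses of Theorem~\ref{thm:compatibility} that the paper invokes without verifying.
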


\begin{proof}
Necessity is Theorem~\ref{thm:compat_q}.  For sufficiency, set
\[
F^1(u^1)=\frac1{\gamma^1}\partial_{u^1}H_v^1(u^1,0),\qquad
F^2(u^1,u^2)=\frac1{\gamma^2}\partial_{u^2}H_v^2(u^1,u^2),
\]
and define
\[
\Phi(u^1,u^2)=\int_0^{u^1}F^1(v^1)dv^1+\int_0^{u^2}F^2(u^1,v^2)dv^2.
\]
Clearly $\partial_{u^2}\Phi=\frac{1}{\gamma^2}\partial_{u^2}H_v^2$.  For the other derivative,
\begin{align*}
\partial_{u^1}\Phi
&=\frac1{\gamma^1}\partial_{u^1}H_v^1(u^1,0)
+\int_0^{u^2}\frac1{\gamma^2}\partial_{u^1}\partial_{v^2}H_v^2(u^1,v^2)dv^2\\
&=\frac1{\gamma^1}\partial_{u^1}H_v^1(u^1,0)
+\int_0^{u^2}\frac1{\gamma^1}\partial_{v^2}\partial_{u^1}H_v^1(u^1,v^2)dv^2\\
&=\frac1{\gamma^1}\partial_{u^1}H_v^1(u^1,u^2),
\end{align*}
where the second line uses~\eqref{eq:compat_N2}.  Thus $d\Phi=\omega_v$, and compactness of $U^2$ makes $e^\Phi$ normalizable.  Theorem~\ref{thm:compatibility} gives the required joint density.
\end{proof}

\subsection{Large-\texorpdfstring{$\gamma$}{gamma} Analysis for Frozen Responses}

\begin{proposition}[Large-$\gamma$ Scaling for Frozen Gibbs Responses]\label{prop:large_gamma_approx}
Suppose $U\subset\mathbb R$ is compact and Assumptions~\ref{assumption coe}--\ref{assumption regularity} hold.  For each $R>0$, assume that the classical frozen values satisfy, uniformly for sufficiently large $\gamma^i$,
\begin{align}\label{eq:uniform_frozen_jets}
\sup_{[0,T]\times B_R\times U^{N-1}}\Big(
|\partial_x\tilde V^i|+|D_x^2\tilde V^i|
+\sum_{j\ne i}\big[|\partial_x\partial_{u^j}\tilde V^i|+|D_x^2\partial_{u^j}\tilde V^i|\big]
\Big)\le C_R.
\end{align}
Then, with $\gamma=\min_i\gamma^i$,
\begin{align*}
\sup_{[0,T]\times B_R\times U^N}\left|\pi_*^i(u^i\mid u^{-i};t,x)-\frac1{|U|}\right|&=O_R((\gamma^i)^{-1}),\\
\max_{i\ne j}\sup_{[0,T]\times B_R\times U^N}|\Delta^{ij}(t,x,u)|&=O_R(\gamma^{-1}).
\end{align*}
\end{proposition}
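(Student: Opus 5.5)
The plan is to make both estimates elementary consequences of two uniform bounds on the compact set $K_R:=[0,T]\times B_R\times U^N$: a sup bound on $H_v^i$ and a sup bound on its mixed action derivatives. Both bounds are independent of $\gamma^i$ once $\gamma^i$ is large. Throughout, $\Delta^{ij}$ denotes the cross-partial defect in~\eqref{eq:cross_partial_q}:
\[
\Delta^{ij}(t,x,u)=\frac1{\gamma^i}\frac{\partial^2H_v^i}{\partial u^j\partial u^i}(t,x,u)-\frac1{\gamma^j}\frac{\partial^2H_v^j}{\partial u^i\partial u^j}(t,x,u).
\]
I assume $|U|>0$, which is needed for $1/|U|$ to make sense.

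\emph{Step 1: uniform bound on $H_v^i$.} By~\eqref{eq:local_Hamiltonian},
\[
H_v^i(t,x,u)=\sum_k\big[b^k(t,x,u)(\partial_x\tilde V^i)^k+\tfrac12(\sigma^k)^2(D_x^2\tilde V^i)^{k,k}\big]+f^i(t,x,u^i).
\]
Here the jets are evaluated at $(t,x;u^{-i})$. On $K_R$ the coefficients $b^k,\sigma^k,f^i$ are continuous (Assumption~\ref{assumption coe}(1)), so they are bounded by a constant depending only on $R$. The jets are bounded by $C_R$ through~\eqref{eq:uniform_frozen_jets}. Hence $\sup_{K_R}|H_v^i|\le M_R$, with $M_R$ independent of $\gamma^i$ for all sufficiently large $\gamma^i$.

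\emph{Step 2: the Gibbs response is close to uniform.} Put $h=H_v^i/\gamma^i$, so that $|h|\le M_R/\gamma^i$. Using $|e^a-1|\le|a|e^{|a|}$, both the numerator $e^{h}$ and the normalized denominator $|U|^{-1}\int_Ue^{h}dz^i$ equal $1+O_R((\gamma^i)^{-1})$ uniformly on $K_R$. Once $\gamma^i$ is large, the denominator is bounded below by $1/2$, so the quotient~\eqref{eq optimal policy} satisfies
\[
\pi_*^i=\frac{1}{|U|}\cdot\frac{1+O_R((\gamma^i)^{-1})}{1+O_R((\gamma^i)^{-1})}=\frac1{|U|}+O_R((\gamma^i)^{-1}).
\]

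\emph{Step 3: the cross-partial defect.} First note that $q^i_*$ and $H_v^i$ differ only by $\partial_t\tilde V^i-\beta^i\tilde V^i$, which does not depend on $u^i$. Their $u^ju^i$ mixed partials therefore coincide, and Proposition~\ref{prop:decomposition} applies directly to $\partial^2_{u^ju^i}H_v^i$.

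In the direct term, the first and second $u$-derivatives of $b^k$ and $\sigma^k$ are bounded by Assumption~\ref{assumption regularity}, $\sigma^k$ itself is bounded on $K_R$, and the jets are bounded by $C_R$. In the indirect term, the factors $\partial_{u^j}\partial_x\tilde V^i$ and $\partial_{u^j}D_x^2\tilde V^i$ are exactly the quantities controlled in~\eqref{eq:uniform_frozen_jets}. Together these give $\sup_{K_R}|\partial^2_{u^ju^i}H_v^i|\le M'_R$. Consequently
\[
|\Delta^{ij}|\le M'_R/\gamma^i+M'_R/\gamma^j\le 2M'_R/\gamma.
\]

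The only delicate point is the uniformity in $\gamma^i$. The frozen value $\tilde V^i$ itself depends on $\gamma^i$ (its HJB~\eqref{eq explore HJB 2} contains $\gamma^i\log\int e^{H/\gamma^i}$), so without control on its jets the constants could grow with $\gamma^i$. The hypothesis~\eqref{eq:uniform_frozen_jets} is exactly what removes this issue. The proof therefore only has to track that no other constant depends on $\gamma^i$, and this is immediate because the coefficient bounds come solely from Assumptions~\ref{assumption coe}--\ref{assumption regularity} and the compactness of $K_R$.
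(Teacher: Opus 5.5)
Your proposal is correct and follows the paper's proof essentially step by step. You obtain a $\gamma$-independent bound $M_R$ on the Hamiltonian from continuity on the compact set together with the first two jet bounds. You show the Gibbs response is $O_R((\gamma^i)^{-1})$-close to $1/|U|$; the paper uses the two-sided sandwich $e^{\pm 2M_R/\gamma^i}/|U|$, whereas you expand numerator and denominator separately. Finally, you get a $\gamma$-independent bound on $\partial_{u^j}\partial_{u^i}q^i_*$ from Proposition~\ref{prop:decomposition} and the mixed jet bounds, which yields $|\Delta^{ij}|\le 2C_R/\gamma$. Your observation that $q^i_*$ and $H_v^i$ have the same mixed partials correctly reconciles your form of $\Delta^{ij}$ with Definition~\ref{def:compat_gap}.
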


\begin{proof}
Fix $R$.  By compactness of $U$, Assumption~\ref{assumption coe}, and the first two bounds in~\eqref{eq:uniform_frozen_jets}, there is $M_R<\infty$, independent of sufficiently large $\gamma^i$, such that
\[
|H^i(t,x,u,\partial_x\tilde V^i,D_x^2\tilde V^i)|\le M_R
\quad\text{on }[0,T]\times B_R\times U^N.
\]
Hence
\[
|U|e^{-M_R/\gamma^i}\le
\int_Ue^{H^i(t,x,z^i,u^{-i},\partial_x\tilde V^i,D_x^2\tilde V^i)/\gamma^i}dz^i
\le |U|e^{M_R/\gamma^i}.
\]
The Gibbs formula therefore gives
\[
\frac{e^{-2M_R/\gamma^i}}{|U|}\le \pi_*^i(u^i\mid u^{-i};t,x)
\le\frac{e^{2M_R/\gamma^i}}{|U|},
\]
and the first estimate follows from $e^{2M_R/\gamma^i}-1=O_R((\gamma^i)^{-1})$.

For the second estimate, Proposition~\ref{prop:decomposition} expresses
$\partial_{u^j}\partial_{u^i}q_*^i$ as the sum of direct coefficient derivatives and terms containing
$\partial_x\partial_{u^j}\tilde V^i$ and $D_x^2\partial_{u^j}\tilde V^i$.
Assumption~\ref{assumption regularity} and~\eqref{eq:uniform_frozen_jets} thus give
\[
\sup_{[0,T]\times B_R\times U^N}
|\partial_{u^j}\partial_{u^i}q_*^i|\le C_R,
\qquad i\ne j,
\]
with $C_R$ independent of sufficiently large exploration weights.  Hence
\[
|\Delta^{ij}|
\le \frac{C_R}{\gamma^i}+\frac{C_R}{\gamma^j}
\le \frac{2C_R}{\gamma},
\]
which proves the second estimate.
\end{proof}

\begin{remark}\label{rmk:large_gamma_local}
The estimate is local because the uniform jet bounds are imposed on $B_R$. It concerns the frozen conditional specifications: large exploration makes them close to uniform and therefore approximately compatible, but it does not imply convergence of the standard Nash problem.
\end{remark}

\subsection{Approximate Joint Realization}\label{sec:approx_CE}

A small curl defect allows the frozen Gibbs specifications to be approximated by the full conditionals of one joint density.

\begin{definition}[Compatibility Gap]\label{def:compat_gap}
For $i \neq j$, define the compatibility gap:
\begin{align*}
\Delta^{ij}(t,x,u) := \frac{1}{\gamma^i}\frac{\partial^2 q^i_*}{\partial u^j \partial u^i}(t,x,u) - \frac{1}{\gamma^j}\frac{\partial^2 q^j_*}{\partial u^i \partial u^j}(t,x,u).
\end{align*}
The frozen Gibbs specifications are exactly compatible when $\Delta^{ij}\equiv0$ for all $i\ne j$ and the associated potential is normalizable.
\end{definition}

\begin{theorem}[Approximate Joint Realization of Frozen Gibbs Responses]\label{thm:approx_CE}
Suppose $U\subset\mathbb{R}$ is compact, convex, and contains the origin, and $q^i_*$ is twice continuously differentiable in $u$ for each $i$. If the compatibility gap satisfies the global bound:
\begin{align*}
\sup_{(t,x,u) \in [0,T] \times \mathbb{R}^N \times U^N} |\Delta^{ij}(t,x,u)| \le \varepsilon, \quad \forall i \neq j,
\end{align*}
then there exists a joint density $\hat{\psi}(\cdot; t,x) \in \cP(U^N)$ for each $(t,x) \in [0,T] \times \mathbb{R}^N$ such that:

\textbf{(1)} The conditional distributions $\hat{\pi}^i(\cdot|u^{-i}; t,x)$ of $\hat{\psi}$ satisfy:
\begin{align*}
D_{\mathrm{KL}}(\hat{\pi}^i(\cdot|u^{-i}; t,x) \| \pi^i_*(\cdot|u^{-i}; t,x)) \le (e^\delta - 1)^2, \quad \forall i, u^{-i}, (t,x) \in [0,T] \times \mathbb{R}^N,
\end{align*}
where $\delta := 2(N-1)\varepsilon |U|^2$. For $\delta \le 1$ (equivalently $\varepsilon \le \frac{1}{2(N-1)|U|^2}$), this simplifies to:
\begin{align*}
D_{\mathrm{KL}}(\hat{\pi}^i(\cdot|u^{-i}; t,x) \| \pi^i_*(\cdot|u^{-i}; t,x)) \le 16(N-1)^2 \varepsilon^2 |U|^4, \quad \forall (t,x) \in [0,T] \times \mathbb{R}^N.
\end{align*}

\textbf{(2)} For the frozen one-player response problem, define the uniform local Hamiltonian loss
\begin{align*}
\varepsilon^i_{\mathrm{loc}}
:=\sup_{(t,x,u^{-i})}\gamma^i D_{\mathrm{KL}}(\hat{\pi}^i(\cdot\mid u^{-i};t,x) \| \pi^i_*(\cdot\mid u^{-i};t,x)).
\end{align*}
Then $\varepsilon^i_{\mathrm{loc}}\le \gamma^i(e^\delta - 1)^2$.
For $\delta \le 1$, this gives $\varepsilon^i_{\mathrm{loc}} \le 16\gamma^i(N-1)^2 \varepsilon^2 |U|^4$. When $\varepsilon=0$, $\hat{\psi}$ realizes all frozen Gibbs conditionals exactly.
\end{theorem}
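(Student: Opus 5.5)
Fix $(t,x)$ throughout and suppress it from the notation. Write $h^i:=q^i_*/\gamma^i$. Since $\partial_t\tilde V^i$ and $-\beta^i\tilde V^i$ do not depend on $u^i$, the Gibbs conditional is $\pi^i_*(u^i\mid u^{-i})\propto e^{h^i(u)}$. The plan has three steps.

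First I build a single potential from a coordinate path, following the construction of Theorem~\ref{thm:N2}. Set
\[
\Phi(u):=\sum_{k=1}^N\int_0^{u^k}\partial_{u^k}h^k(u^1,\ldots,u^{k-1},s,0,\ldots,0)\,ds ,
\]
and take $\hat\psi:=Z^{-1}e^{\Phi}$. Compactness of $U^N$ and continuity of $\Phi$ make $Z$ finite, so $\hat\psi\in\cP(U^N)$.

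Second, I compare derivatives. The $k=i$ term of $\Phi$ gives $\partial_{u^i}\Phi$ the value $\partial_{u^i}h^i$ evaluated at $(u^1,\ldots,u^i,0,\ldots,0)$. Terms with $k<i$ do not involve $u^i$. For $k>i$, differentiate under the integral and replace $\partial_{u^i}\partial_{u^k}h^k$ by $\partial_{u^k}\partial_{u^i}h^i+\Delta^{ki}$, which is where the gap bound $|\Delta^{ki}|\le\varepsilon$ enters. The $\partial_{u^k}\partial_{u^i}h^i$ pieces telescope: integrating coordinate by coordinate from $k=i+1$ to $N$ turns $\partial_{u^i}h^i(u^1,\ldots,u^i,0,\ldots,0)$ into $\partial_{u^i}h^i(u)$. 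The result is
\[
\partial_{u^i}\Phi(u)=\partial_{u^i}h^i(u)+r^i(u),\qquad |r^i(u)|\le (N-1)\varepsilon\,|U|,
\]
using that each integration interval has length at most $|U|$. Integrating $r^i$ in $u^i$ from a base point then shows that $\log\hat\psi(u)-h^i(u)$ equals a function $c(u^{-i})$ plus a term $\rho^i(u)$ with $\operatorname{osc}_{u^i}\rho^i\le (N-1)\varepsilon|U|^2\le\delta/2$.

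Third, the KL bound. Let $\hat\pi^i$ be the full conditional of $\hat\psi$. Then $\hat\pi^i(u^i\mid u^{-i})/\pi^i_*(u^i\mid u^{-i})=e^{\rho^i}/\mathbb E_{\pi^i_*}[e^{\rho^i}]$, and this ratio lies in $[e^{-\delta},e^{\delta}]$ because the oscillation of $\rho^i$ is at most $\delta/2$ in both the numerator and the normalizer. To bound the divergence I use $D_{\mathrm{KL}}\le\chi^2=\mathbb E_{\pi^i_*}[(r-1)^2]$ with $r$ the ratio. Since $|r-1|\le e^\delta-1$, this gives $D_{\mathrm{KL}}\le(e^\delta-1)^2$. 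For $\delta\le1$, the bound $e^\delta-1\le 2\delta$ turns this into $4\delta^2=16(N-1)^2\varepsilon^2|U|^4$. Part (2) is immediate: multiply by $\gamma^i$ and take the supremum, which is harmless because every bound is uniform in $(t,x,u^{-i})$. When $\varepsilon=0$, the remainder vanishes, $\Phi$ is an exact potential, and Theorem~\ref{thm:compatibility} gives exact realization.

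The main obstacle is the bookkeeping in the second step: tracking which coordinates are frozen at $0$ in each path segment, so that the telescoping is valid and the error count $(N-1)\varepsilon|U|$ comes out right.
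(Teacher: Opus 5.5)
Your proposal is correct and follows the paper's proof essentially step for step. It uses the same coordinate-path potential $\Phi$ built from $\frac{1}{\gamma^k}\partial_{u^k}q^k_*$, the same telescoping comparison via $\Delta^{ki}$ giving $|\partial_{u^i}\Phi-\partial_{u^i}h^i|\le (N-1)\varepsilon|U|$, and the same $\chi^2$ domination of the KL divergence. The only difference is cosmetic: you absorb the normalizing constant through $e^{\rho^i}/\mathbb{E}_{\pi^i_*}[e^{\rho^i}]$ and an oscillation bound, whereas the paper uses a reference-point constant $c(u^{-i})$ with $|c|\le\eta$. Your version actually gives the sharper ratio bound $e^{\pm\delta/2}$, which you then relax to the stated $e^{\pm\delta}$.
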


\begin{proof}
\textbf{Construction of $\hat{\psi}$.} For fixed $(t,x)$, define the coordinate path integral along the piecewise-linear path $0 \to (u^1,0,\ldots,0) \to \cdots \to (u^1,\ldots,u^N)$ (which stays in $U^N$ by convexity and $0\in U$):
\begin{align*}
\Phi(u) := \sum_{k=1}^N \int_0^{u^k} a^k(u^1, \ldots, u^{k-1}, v^k, 0, \ldots, 0) dv^k,
\end{align*}
where $a^k(u) := \frac{1}{\gamma^k}\frac{\partial q^k_*}{\partial u^k}(t,x,u)$. Set $\hat{\psi}(u; t,x) := e^{\Phi(u)}/Z(t,x)$ with $Z(t,x) = \int_{U^N} e^{\Phi(u)}du < \infty$ (since $U$ is compact and $\Phi$ is continuous).

Differentiating $\Phi$ and applying the fundamental theorem of calculus to compare $a^k(u)$ with $a^k(u^1,\ldots,u^k,0,\ldots,0)$ yields
\begin{align*}
\frac{\partial \Phi}{\partial u^k}(u) - a^k(u) = \sum_{j > k} \int_0^{u^j} \Delta^{jk}(u^1, \ldots, u^{j-1}, v^j, 0, \ldots, 0)\, dv^j.
\end{align*}
Since $|\Delta^{jk}| \le \varepsilon$, this gives the uniform bound
\begin{align}\label{eq:gradient_deviation}
\big|\tfrac{\partial \Phi}{\partial u^k}(u) - a^k(u)\big| \le (N-1)\varepsilon |U|.
\end{align}
For the log-ratio bound, the conditional density $\hat{\pi}^i(u^i|u^{-i}) = \hat{\psi}(u)/\int \hat{\psi}(v^i, u^{-i})dv^i$ satisfies $\frac{\partial}{\partial u^i}\log \hat{\pi}^i = \frac{\partial \Phi}{\partial u^i}$, while $\frac{\partial}{\partial u^i}\log \pi^i_* = a^i(u)$. By~\eqref{eq:gradient_deviation}, $\big|\frac{\partial}{\partial u^i}\log\frac{\pi^i_*}{\hat{\pi}^i}\big|\le (N-1)\varepsilon |U|$. Integrating from a reference point $u^i_0\in U$ gives $\big|\log \frac{\pi^i_*}{\hat{\pi}^i}(u^i|u^{-i}) - c(u^{-i})\big|\le (N-1)\varepsilon |U|^2 =: \eta$, where $c(u^{-i})=\log(\pi^i_*(u^i_0|u^{-i})/\hat{\pi}^i(u^i_0|u^{-i}))$. Since both $\pi^i_*$ and $\hat{\pi}^i$ integrate to 1 and $\pi^i_*/\hat{\pi}^i\in[e^{c-\eta},e^{c+\eta}]$, normalization forces $|c(u^{-i})|\le\eta$, giving $|\log(\pi^i_*/\hat{\pi}^i)| \le 2\eta =: \delta = 2(N-1)\varepsilon |U|^2$.

The KL bound via $\chi^2$-divergence follows: setting $h:=\log(\pi^i_*/\hat{\pi}^i)$ with $|h|\le\delta$, we have $D_{\mathrm{KL}}(\hat{\pi}^i \| \pi^i_*) \le \int_U \pi^i_*(e^{-h}-1)^2\,du^i \le (e^\delta - 1)^2$.

For the local entropy-regularized variational objective at $(t,x,u^{-i})$, the optimal value under $\pi^i_*$ is $0$ (by Gibbs normalization $\int e^{q^i_*/\gamma^i}du^i = 1$). The value under $\hat{\pi}^i$ is $\int[q^i_*-\gamma^i\log\hat{\pi}^i]\hat{\pi}^i\,du^i = \gamma^i\int[\log \pi^i_*-\log\hat{\pi}^i]\hat{\pi}^i\,du^i = -\gamma^i D_{\mathrm{KL}}(\hat{\pi}^i\|\pi^i_*)$, so
\begin{align*}
\gamma^i D_{\mathrm{KL}}(\hat{\pi}^i \| \pi^i_*) \le \gamma^i(e^\delta - 1)^2 \le 16\gamma^i(N-1)^2 \varepsilon^2 |U|^4
\end{align*}
for $\delta \le 1$.  Taking the supremum over $(t,x,u^{-i})$ gives the stated bound for $\varepsilon^i_{\mathrm{loc}}$.
\end{proof}

\begin{corollary}[Frozen Value-Function Sub-Optimality]\label{cor:value_diff}
Under the hypotheses of Theorem~\ref{thm:approx_CE}, assume in addition that the constructed conditional policy $\hat\pi^i(\cdot\mid u^{-i};t,x)$ is admissible for the frozen control problem.  Let $\tilde{V}^i(t,x;u^{-i})$ be the frozen-opponent optimal value and $\tilde{V}^i_{\hat{\pi}}(t,x;u^{-i})$ the value under $\hat{\pi}^i$ with the same frozen $u^{-i}$. Then:
\begin{align*}
0 \le \tilde{V}^i(t,x;u^{-i}) - \tilde{V}^i_{\hat{\pi}}(t,x;u^{-i}) \le \frac{1 - e^{-\beta^i(T-t)}}{\beta^i}\,\varepsilon^i_{\mathrm{loc}} \le \frac{1}{\beta^i}\,\varepsilon^i_{\mathrm{loc}}, \quad \text{if } \beta^i > 0,
\end{align*}
with the convention that for $\beta^i = 0$ the bound is interpreted in the limiting sense, giving $\tilde{V}^i - \tilde{V}^i_{\hat{\pi}} \le (T-t)\,\varepsilon^i_{\mathrm{loc}}$.
In particular, if the global compatibility-gap bound satisfies $\varepsilon=O(1/\gamma)$ and $\gamma^i=O(\gamma)$, then the value-function loss is $O(1/\gamma)$.
\end{corollary}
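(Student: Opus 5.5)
The plan is a performance-difference (verification) argument for the frozen single-agent problem, with the opponents held at $u^{-i}$. The lower bound $0\le \tilde V^i-\tilde V^i_{\hat\pi}$ is immediate: $\hat\pi^i$ is assumed admissible, and $\tilde V^i(t,x;u^{-i})$ is the supremum over admissible policies. For the upper bound I will compare the two values through the frozen HJB~\eqref{eq HJB-explore}, using the classical $C^{1,2}$ regularity of $\tilde V^i(\cdot;u^{-i})$ assumed in Section~\ref{subsec:assessment}.

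\textbf{Step 1: running loss per unit time.} Run the frozen exploratory state under $\hat\pi^i$ with the opponents fixed at $u^{-i}$, so the averaged coefficients integrate against $\hat\pi^i\otimes\delta_{u^{-i}}$. At each $(s,y)$ define the loss
\[
\ell(s,y):=-\Big[\partial_t\tilde V^i-\beta^i\tilde V^i+\int_U\big(H^i-\gamma^i\log\hat\pi^i\big)\hat\pi^i\,du^i\Big](s,y).
\]
The HJB says the supremum over densities of the same bracket equals $0$, and the supremum is attained at the Gibbs law $\pi^i_*$. Since $q^i_*=\partial_t\tilde V^i+H^i-\beta^i\tilde V^i$, the bracket equals $\int(q^i_*-\gamma^i\log\hat\pi^i)\hat\pi^i\,du^i$. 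Theorem~\ref{thm:approx_CE}(2) computes this quantity as $-\gamma^iD_{\mathrm{KL}}(\hat\pi^i\|\pi^i_*)$. Hence $0\le \ell(s,y)\le\varepsilon^i_{\mathrm{loc}}$ uniformly in $(s,y)$, because $\varepsilon^i_{\mathrm{loc}}$ is a supremum over all $(t,x,u^{-i})$.

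\textbf{Step 2: It\^o's formula.} Apply It\^o's formula to $e^{-\beta^i(s-t)}\tilde V^i(s,X_s;u^{-i})$ on $[t,T]$ along the $\hat\pi^i$-state. The generator applied to $\tilde V^i$ under averaged coefficients equals $\int_U\big[H^i-f^i\big]\hat\pi^i\,du^i$ evaluated at the jets of $\tilde V^i$. This uses the linearity of $b$ and $\sigma^2$ in the averaging, which is exactly how $\tilde b$ and $\tilde\sigma$ were defined. Adding and subtracting the entropy-regularized reward then gives
\[
\tilde V^i(t,x)=\mathbb E\Big[\int_t^Te^{-\beta^i(s-t)}\big(\tilde f^i-\gamma^i\mathcal E\big)(s,X_s,\hat\pi^i)\,ds+e^{-\beta^i(T-t)}g^i(X_T)\Big]+\mathbb E\int_t^Te^{-\beta^i(s-t)}\ell(s,X_s)\,ds,
\]
using the terminal condition $\tilde V^i(T,\cdot)=g^i$. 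The first expectation is $\tilde V^i_{\hat\pi}$. Bounding $\ell$ by $\varepsilon^i_{\mathrm{loc}}$ yields the factor $\int_t^Te^{-\beta^i(s-t)}ds=(1-e^{-\beta^i(T-t)})/\beta^i\le 1/\beta^i$, and the same integral equals $T-t$ when $\beta^i=0$.

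\textbf{Main obstacle: justifying the vanishing of the stochastic integral.} I expect the hardest part to be showing that the stochastic integral in Step 2 is a true martingale and that the expectations are finite. I would first localize with exit times of balls $B_R$ and then pass to the limit. For the limit I would use the polynomial growth of $\tilde V^i$ and its gradient, obtained from Assumption~\ref{assumption coe}(5) together with standard estimates, combined with the moment bound $\mathbb E\sup|X_s|^2<\infty$ and the admissibility moment conditions of Definition~\ref{def:admissible_policy}; dominated convergence then handles the running terms. If those estimates only give growth of order $p_f$, I would instead upgrade the moment bound to order $p_f$, which holds under linear-growth coefficients.

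\textbf{Final claim.} If $\varepsilon=O(1/\gamma)$ and $\gamma^i=O(\gamma)$, then $\delta=O(1/\gamma)$ and $(e^\delta-1)^2=O(\gamma^{-2})$. Theorem~\ref{thm:approx_CE}(2) gives $\varepsilon^i_{\mathrm{loc}}\le\gamma^i(e^\delta-1)^2=O(1/\gamma)$, and inserting this into the bound above gives a value-function loss of $O(1/\gamma)$.
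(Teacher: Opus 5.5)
Your proposal is correct and follows essentially the same route as the paper. The paper also uses $q^i_*=\gamma^i\log\pi^i_*$ to identify the HJB bracket evaluated at $\hat\pi^i$ with $-\gamma^i D_{\mathrm{KL}}(\hat\pi^i\|\pi^i_*)$, applies It\^o's formula to $e^{-\beta^i(s-t)}\tilde V^i(s,X_s^{\hat\pi};u^{-i})$ to obtain the value gap as a discounted time integral of this KL loss, and bounds that loss by $\varepsilon^i_{\mathrm{loc}}$; the only differences are minor (you take the lower bound from the supremum definition rather than from nonnegativity of the KL integrand, and you spell out the localization and the $O(1/\gamma)$ bookkeeping that the paper leaves implicit).
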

\begin{proof}
By definition, $\tilde{V}^i_{\hat{\pi}}$ is the regularized performance functional
$\tilde{J}^i$ evaluated at the approximate policy $\hat{\pi}^i$:
\begin{align}\label{eq:Vhat-def}
\tilde V^i_{\hat\pi}(t,x;u^{-i})
&:=\tilde J^i(t,x;\hat\pi^i,u^{-i})\nonumber\\
&=\mathbb E^{\hat\pi}\bigg[e^{-\beta^i(T-t)}g^i(X_T^{\hat\pi})\\
&\quad+\int_t^T e^{-\beta^i(s-t)}\int_U
\big(f^i(s,X_s^{\hat\pi},u^i)-\gamma^i\log\hat\pi^i(u^i\mid s,X_s^{\hat\pi};u^{-i})\big)\nonumber\\
&\hspace{35mm}\times\hat\pi^i(u^i\mid s,X_s^{\hat\pi};u^{-i})\,du^i\,ds\bigg].
\end{align}
where $X^{\hat{\pi}}$ is the state process driven by the profile
$\bigl(\hat{\pi}^i,\,u^{-i}\bigr)$.
Note the shared terminal condition:
$\tilde{V}^i_{\hat{\pi}}(T,\cdot\,;u^{-i}) = g^i = \tilde{V}^i(T,\cdot\,;u^{-i})$.

\medskip
\noindent\emph{Algebraic identity.}
The q-function satisfies
$q^i_* = \partial_t \tilde{V}^i + H^i - \beta^i \tilde{V}^i$
by definition. The Gibbs optimality condition (Proposition~\ref{prop:q_star_normalization}) gives
$\pi^i_* \propto \exp\{q^i_*/\gamma^i\}$, and the HJB equation---in equivalent
LogSumExp form---ensures the partition function is unity:
$\int_U \exp\{q^i_*/\gamma^i\}\,du^i = 1$.
Hence $\pi^i_* = \exp\{q^i_*/\gamma^i\}$ and
\begin{equation}\label{eq:q-gibbs}
q^i_* = \gamma^i \log \pi^i_*,
\qquad\text{equivalently,}\qquad
H^i = \gamma^i \log \pi^i_* - \partial_t \tilde{V}^i + \beta^i \tilde{V}^i.
\end{equation}
Since $-\partial_t \tilde{V}^i + \beta^i \tilde{V}^i$ does not depend on $u^i$,
substituting \eqref{eq:q-gibbs} yields
\begin{align}
\partial_t \tilde{V}^i - \beta^i \tilde{V}^i
+ \int_U \Bigl[H^i - \gamma^i \log \hat{\pi}^i\Bigr]\,\hat{\pi}^i\, du^i
&= \gamma^i \!\int_U \log\!\frac{\pi^i_*}{\hat{\pi}^i}\;\hat{\pi}^i\, du^i \notag\\[4pt]
&= -\,\gamma^i\, D_{\mathrm{KL}}\!\Bigl(\hat{\pi}^i \,\Big\|\, \pi^i_*\Bigr).
\label{eq:key-id}
\end{align}

\medskip
\noindent\emph{It\^o expansion.}
Apply It\^o's formula to $e^{-\beta^i(s-t)}\,\tilde{V}^i\!\bigl(s,\,X_s^{\hat{\pi}};\,u^{-i}\bigr)$.
Under $\bigl(\hat{\pi}^i,\,u^{-i}\bigr)$ the infinitesimal generator acts as
$\mathcal{L}^{\hat{\pi}} \tilde{V}^i = \int_U H^i\,\hat{\pi}^i\, du^i - \int_U f^i\,\hat{\pi}^i\, du^i$
(since $H^i$ bundles the drift, diffusion, and running-cost terms evaluated at
$\nabla_x \tilde{V}^i$, $D^2_x \tilde{V}^i$).
Combining with \eqref{eq:key-id}, the drift decomposes as
\begin{equation*}
\partial_t \tilde{V}^i - \beta^i \tilde{V}^i + \mathcal{L}^{\hat{\pi}} \tilde{V}^i
= -\,\gamma^i\, D_{\mathrm{KL}}\!\Bigl(\hat{\pi}^i \,\Big\|\, \pi^i_*\Bigr)
+ \int_U \Bigl(\gamma^i \log \hat{\pi}^i - f^i\Bigr)\,\hat{\pi}^i\, du^i.
\end{equation*}
Integrating from $t$ to $T$, taking expectations (the stochastic integral is a
true martingale and vanishes), and using
$\tilde{V}^i(T,\cdot\,;u^{-i}) = g^i = \tilde{V}^i_{\hat{\pi}}(T,\cdot\,;u^{-i})$:
\begin{align*}
\tilde{V}^i(t,x;\,u^{-i})
=& \underbrace{\mathbb{E}^{\hat{\pi}}\!\left[
e^{-\beta^i(T-t)}\, g^i\!\bigl(X_T^{\hat{\pi}}\bigr)
+ \int_t^T e^{-\beta^i(s-t)} \!\int_U
\bigl(f^i - \gamma^i \log \hat{\pi}^i\bigr)\,\hat{\pi}^i\, du^i \, ds
\right]}_{\displaystyle = \;\tilde{V}^i_{\hat{\pi}}(t,x;\,u^{-i})
\quad\text{by \eqref{eq:Vhat-def}}}
\\&+ \mathbb{E}^{\hat{\pi}}\!\left[\int_t^T e^{-\beta^i(s-t)}\,\gamma^i\,
D_{\mathrm{KL}}\!\Bigl(\hat{\pi}^i \,\Big\|\, \pi^i_*\Bigr)\, ds\right].
\end{align*}
Rearranging gives the value-gap representation
\begin{align}\label{eq:gap}
\tilde{V}&^i(t,x;\,u^{-i}) - \tilde{V}^i_{\hat{\pi}}(t,x;\,u^{-i})
\nonumber\\=& \mathbb{E}^{\hat{\pi}}\!\left[\int_t^T e^{-\beta^i(s-t)}\,\gamma^i\,
D_{\mathrm{KL}}\!\Bigl(
\hat{\pi}^i\!\bigl(\cdot\,\big|\,u^{-i};\,s,\,X_s^{\hat{\pi}}\bigr)
\,\Big\|\,
\pi^i_*\!\bigl(\cdot\,\big|\,u^{-i};\,s,\,X_s^{\hat{\pi}}\bigr)
\Bigr)\, ds\right]
\geq\, 0.
\end{align}
By Theorem~\ref{thm:approx_CE}, $\gamma^i\, D_{\mathrm{KL}}\!\bigl(\hat{\pi}^i \,\|\, \pi^i_*\bigr)
\leq \varepsilon^i_{\mathrm{loc}}$ holds uniformly in $(s,X_s^{\hat{\pi}})$.
Therefore
\begin{equation*}
0 \;\leq\; \tilde{V}^i(t,x;\,u^{-i}) - \tilde{V}^i_{\hat{\pi}}(t,x;\,u^{-i})
\;\leq\; \varepsilon^i_{\mathrm{loc}} \int_t^T e^{-\beta^i(s-t)}\, ds
\;=\; \frac{1 - e^{-\beta^i(T-t)}}{\beta^i}\;\varepsilon^i_{\mathrm{loc}}.
\end{equation*}
\end{proof}
\begin{remark}\label{rmk:value_significance}
Theorem~\ref{thm:approx_CE} controls the discrepancy between the frozen Gibbs conditionals and a compatible joint realization. Corollary~\ref{cor:value_diff} converts the same KL discrepancy into a value loss for the frozen problem through~\eqref{eq:gap}. No claim is made here about the value gap in the standard $N$-player Nash problem.
\end{remark}

\begin{remark}[Local version]\label{rmk:locality}
If the gap estimate is available only on $[0,T]\times B_R\times U^N$, the coordinate-path construction, conditional KL bound, and local Hamiltonian loss hold on that set.  A bound for the full dynamic value function requires a gap estimate along the whole state trajectory; alternatively, the same It\^o argument gives a stopped value bound up to the first exit from $B_R$.
\end{remark}

\section{Stationary Discounted Frozen Responses}\label{sec:ergodic}

Consider the time-homogeneous discounted counterpart of the frozen-opponent problem. Since $\beta^i>0$, this is an infinite-horizon discounted problem rather than an ergodic average-reward problem. We write $V^i(x;u^{-i})$ for the stationary frozen value.

\subsection{Problem Setup}

Consider an infinite-horizon game with time-homogeneous coefficients $f^i = f^i(x, u^i)$, $b^i = b^i(x, u)$, $\sigma^i = \sigma^i(x, u)$, and $U$ compact.  Let $\Pi_{\rm stat}$ denote stationary Markov densities $\pi(u;x)$ satisfying the stationary analogues of Definition~\ref{def:admissible_policy} and having finite discounted performance. The state process evolves as
\begin{align*}
dX^i_s = b^i(X_s, \mathbf{u}_s)ds + \sigma^i(X_s, \mathbf{u}_s)dW^i_s.
\end{align*}
Each player discounts at rate $\beta^i > 0$. The stationary discounted frozen value is
\begin{align*}
V^i(x;u^{-i}) := \sup_{\pi^i\in\Pi_{\rm stat}}\mathbb E\bigg[\int_0^\infty e^{-\beta^is}\int_U
&\Big[f^i(\tilde X_s^{0,x,\pi,u^{-i}},u^i)
-\gamma^i\log\pi^i(u^i;\tilde X_s^{0,x,\pi,u^{-i}})\Big]\\
&\times\pi^i(u^i;\tilde X_s^{0,x,\pi,u^{-i}})\,du^i\,ds\bigg].
\end{align*}
Time-homogeneity ensures $V^i = V^i(x; u^{-i})$ has no explicit time dependence, so the HJB equation is elliptic.

\subsection{Stationary HJB Equation and Compatibility}

The value function $V^i$ satisfies:
\begin{align}\label{eq:ergodic_HJB}
\beta^i V^i(x; u^{-i}) = G^i(\nabla_x V^i, D^2_x V^i; u^{-i}, \gamma^i),
\end{align}
where
$G^i(p,A;u^{-i},\gamma^i):=\gamma^i\log\int_U\exp\{H^i(x,(u^i,u^{-i}),p,A)/\gamma^i\}du^i$,
with the time argument suppressed.  The conditional optimal policy is $\pi^i_*(u^i\mid x,u^{-i})=\exp\{q^i_*(x,u)/\gamma^i\}$, where
$q^i_*(x,u):=H^i(x,(u^i,u^{-i}),\nabla_xV^i,D_x^2V^i)-\beta^iV^i(x;u^{-i})$ and $\int_Ue^{q^i_*/\gamma^i}du^i=1$. In the results below we assume that this elliptic equation has the stated classical solution and local Schauder bounds. Standard sufficient conditions include uniform ellipticity together with appropriate dissipativity, growth, and H\"older assumptions; see \cite{Krylov87,GT01}.

The cross-partial decomposition has the same form as in the finite-horizon frozen problem:

\begin{proposition}[Stationary Frozen Cross-Partial Decomposition]\label{prop:ergodic_decomp}
Under Assumption~\ref{assumption regularity} with time-homogeneous coefficients, the decomposition~\eqref{eq:cross_partial_decomp} holds with $\tilde{V}^i$ replaced by $V^i$ and all time derivatives removed.
\end{proposition}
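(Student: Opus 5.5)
The plan is to repeat the proof of Proposition~\ref{prop:decomposition} verbatim in the stationary setting, using the stationary $q$-function
\[
q^i_*(x,u)=H^i\big(x,(u^i,u^{-i}),\nabla_xV^i(x;u^{-i}),D_x^2V^i(x;u^{-i})\big)-\beta^iV^i(x;u^{-i}).
\]
The only structural input is the one used in the finite-horizon case: the frozen value $V^i(\cdot;u^{-i})$ depends on $u^{-i}$ as a parameter but not on the own action $u^i$. In the stationary HJB~\eqref{eq:ergodic_HJB}, $u^i$ is integrated out inside $G^i$, so $V^i$, $\nabla_xV^i$ and $D_x^2V^i$ are functions of $(x,u^{-i})$ only.

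The first step is to differentiate $q^i_*$ in $u^i$. The term $-\beta^iV^i$ drops out, and there is no $\partial_t V^i$ term to handle. This leaves
\[
\partial_{u^i}q^i_*=\sum_{k}\big[\partial_{u^i}b^k\,(\nabla_xV^i)^k+\sigma^k\partial_{u^i}\sigma^k\,(D_x^2V^i)^{k,k}\big]+\partial_{u^i}f^i .
\]
Here $f^i=f^i(x,u^i)$ is the time-homogeneous running reward, which by Remark~\ref{rmk f-dep} does not depend on $u^{-i}$.

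The second step is to differentiate in $u^j$, $j\ne i$. The term $\partial_{u^i}f^i$ vanishes. For the rest, the product rule gives two groups of terms. Derivatives falling on the coefficients produce $\partial^2_{u^ju^i}b^k$ and $\partial_{u^j}(\sigma^k\partial_{u^i}\sigma^k)=\partial_{u^i}\sigma^k\partial_{u^j}\sigma^k+\sigma^k\partial^2_{u^ju^i}\sigma^k$; these form the direct effect. Derivatives falling on the jets produce $\partial_{u^j}\nabla_xV^i$ and $\partial_{u^j}D_x^2V^i$; these form the indirect effect. Together they give exactly~\eqref{eq:cross_partial_decomp} with $\tilde V^i$ replaced by $V^i$.

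The only genuine obstacle is justifying the regularity. We need $V^i$ to be $C^1$ in $u^{-i}$ with mixed derivatives $\partial_{u^j}\nabla_xV^i$ and $\partial_{u^j}D_x^2V^i$, and we need to be allowed to interchange $\partial_{u^j}$ with $\nabla_x$ and $D_x^2$. As in Proposition~\ref{prop:decomposition}, I will take this as part of the standing hypotheses: the paper assumes a classical solution of~\eqref{eq:ergodic_HJB} with local Schauder bounds. If this had to be argued, I would differentiate~\eqref{eq:ergodic_HJB} formally in $u^j$. This yields a linear elliptic equation for $w=\partial_{u^j}V^i$, with zeroth-order term $\beta^i w$, whose operator is the Gibbs-averaged generator. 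Uniform ellipticity, $\beta^i>0$, and the $C^{2}$-in-$u$ coefficients from Assumption~\ref{assumption regularity} then give Schauder estimates for difference quotients in $u^j$, which justifies both the existence of $w\in C^{2+\alpha}_{\rm loc}$ and the identification $w=\partial_{u^j}V^i$. Once this is in place, the remaining computation is identical to the finite-horizon one.
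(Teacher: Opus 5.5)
Your proposal is correct and follows the paper's proof. Like the paper, you write the stationary $q^i_*=H^i-\beta^iV^i$ and use that $V^i(x;u^{-i})$ does not depend on $u^i$, so $-\beta^iV^i$ drops out after the $u^i$-derivative. You then split the $u^j$-derivative by the product and chain rules into coefficient (direct) and jet (indirect) terms.

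You also explicitly flag the needed $C^1$-in-$u^{-i}$ regularity of $\nabla_xV^i$ and $D_x^2V^i$, which the paper's stationary statement leaves implicit. Your sketched linearized-elliptic justification of it goes beyond the paper but is consistent with it.
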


\begin{proof}
For the stationary frozen problem,
\[
q_*^i(x,u)=H^i(x,u,\partial_xV^i,\partial_x^2V^i)-\beta^iV^i(x;u^{-i}).
\]
Since $V^i(x;u^{-i})$ is independent of the own action $u^i$,
\[
\partial_{u^i}q_*^i=\partial_{u^i}H^i(x,u,\partial_xV^i,\partial_x^2V^i).
\]
Differentiate with respect to $u^j$, $j\ne i$.  The product and chain rules give
\begin{align*}
\partial_{u^j}\partial_{u^i}q_*^i
={}&\partial_{u^j}\partial_{u^i}H^i
+\sum_{k=1}^N\partial_{u^i p^k}H^i\,\partial_{x^k}V^i_{u^j}
+\sum_{k=1}^N\partial_{u^i A^{k,k}}H^i\,\partial_{x^kx^k}^2V^i_{u^j},
\end{align*}
where all derivatives of $H^i$ are evaluated at $(x,u,\partial_xV^i,\partial_x^2V^i)$.  Inserting the definition~\eqref{eq Hamiltonian} yields exactly the direct and indirect terms in~\eqref{eq:cross_partial_decomp}; the only missing terms relative to the finite-horizon formula are those involving time derivatives, which are absent here.
\end{proof}

\subsection{Large-\texorpdfstring{$\gamma$}{gamma} and Approximate Joint Realization}

\begin{proposition}[Stationary Large-$\gamma$ Asymptotics]\label{prop:ergodic_large_gamma}
Suppose $U\subset\mathbb R$ is compact.  Assume the stationary frozen HJB has a classical solution and, for each $R>0$, the stationary analog of~\eqref{eq:uniform_frozen_jets} is bounded by a constant $C_R$ independent of sufficiently large $\gamma^i$.  Then
\begin{align*}
\sup_{B_R\times U^N}\left|\pi_*^i(u^i\mid x,u^{-i})-\frac1{|U|}\right|&=O_R((\gamma^i)^{-1}),\\
\max_{i\ne j}\sup_{B_R\times U^N}|\Delta^{ij}(x,u)|&=O_R(\gamma^{-1}),
\qquad \gamma:=\min_k\gamma^k.
\end{align*}
\end{proposition}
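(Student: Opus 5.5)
The plan is to repeat the proof of Proposition~\ref{prop:large_gamma_approx} essentially line by line, with the time variable removed and Proposition~\ref{prop:ergodic_decomp} used in place of Proposition~\ref{prop:decomposition}. The argument has two parts: a uniform bound on the stationary Hamiltonian, which gives the near-uniformity of the Gibbs conditionals, and a uniform bound on the mixed $u$-derivatives of $q^i_*$, which gives the $O(\gamma^{-1})$ gap.

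\emph{First estimate.} Fix $R$. By compactness of $U$, the linear growth in Assumption~\ref{assumption coe}(4) and the growth of $f^i$ in Assumption~\ref{assumption coe}(5), the coefficients $b^k,\sigma^k,f^i$ are bounded on $B_R\times U^N$. The assumed stationary jet bound controls $|\nabla_xV^i|+|D_x^2V^i|\le C_R$ there, uniformly in large $\gamma^i$. Hence there is $M_R$, independent of $\gamma^i$, with
\[
|H^i(x,u,\nabla_xV^i,D_x^2V^i)|\le M_R \quad\text{on } B_R\times U^N.
\]
The $\beta^iV^i$ term in $q^i_*$ does not depend on $u^i$, so it cancels from the Gibbs normalization. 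We therefore write
\[
\pi^i_*=\frac{e^{H^i/\gamma^i}}{\int_U e^{H^i(x,z^i,u^{-i},\cdot)/\gamma^i}\,dz^i}.
\]
Bounding the denominator between $|U|e^{\mp M_R/\gamma^i}$ gives
\[
\frac{e^{-2M_R/\gamma^i}}{|U|}\le\pi^i_*\le\frac{e^{2M_R/\gamma^i}}{|U|}.
\]
Since $e^{2M_R/\gamma^i}-1=O_R((\gamma^i)^{-1})$, the first estimate follows.

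\emph{Second estimate.} By Proposition~\ref{prop:ergodic_decomp}, $\partial_{u^j}\partial_{u^i}q^i_*$ is a sum of two groups of terms. The direct terms are products of $\partial_u b^k$, $\partial_u^2 b^k$, $\partial_u\sigma^k$, $\partial_u^2\sigma^k$ and $\sigma^k$ with $\nabla_xV^i$ and $D^2_xV^i$. The indirect terms are products of $\partial_{u^i}b^k$ and $\sigma^k\partial_{u^i}\sigma^k$ with $\partial_x\partial_{u^j}V^i$ and $D^2_x\partial_{u^j}V^i$. Assumption~\ref{assumption regularity} bounds the coefficient $u$-derivatives. The factor $\sigma^k$ is bounded on $B_R\times U^N$ by Assumption~\ref{assumption coe}(4). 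The value-function jets are bounded by $C_R$ by hypothesis. This gives
\[
\sup_{B_R\times U^N}|\partial_{u^j}\partial_{u^i}q^i_*|\le C_R'
\]
for all $i\ne j$, with $C_R'$ independent of large $\gamma$. From Definition~\ref{def:compat_gap}, in its stationary form,
\[
|\Delta^{ij}|\le\frac{C_R'}{\gamma^i}+\frac{C_R'}{\gamma^j}\le\frac{2C_R'}{\gamma}.
\]

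The main point to check is uniformity in $\gamma$. The only $\gamma$-dependent quantities entering the bounds are the jets of $V^i$, and these are controlled by hypothesis. One should also confirm that $V^i$ itself, which may grow like $\gamma^i\log|U|$, never enters the estimates. It does not: $V^i$ appears only through $\beta^iV^i(x;u^{-i})$, which is independent of $u^i$ and so cancels both from the normalization and from the $u^i$-derivative. Its $u^j$-derivative is never needed, because $\partial_{u^i}$ is applied first.
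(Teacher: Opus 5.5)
Your proposal is correct and follows the paper's proof essentially step for step. A uniform bound $|H^i|\le M_R$ on $B_R\times U^N$ yields the two-sided Gibbs estimate $e^{\mp 2M_R/\gamma^i}/|U|$, and Proposition~\ref{prop:ergodic_decomp} together with the assumed jet bounds gives a $\gamma$-independent bound on $\partial_{u^j}\partial_{u^i}q^i_*$, hence $|\Delta^{ij}|\le 2C_R/\gamma$. Your extra check is also sound and is left implicit in the paper: $\beta^iV^i$, which may be of order $\gamma^i$, does not depend on $u^i$, so it drops out of both the normalization and the $u^i$-derivative.
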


\begin{proof}
The assumed stationary jet bounds and compactness of $U$ give a constant $M_R$, independent of sufficiently large $\gamma^i$, such that
\[
|H^i(x,u,\partial_xV^i,D_x^2V^i)|\le M_R
\quad\text{on }B_R\times U^N.
\]
The stationary Gibbs formula then gives the same two-sided estimate
\[
\frac{e^{-2M_R/\gamma^i}}{|U|}\le \pi_*^i(u^i\mid x,u^{-i})
\le\frac{e^{2M_R/\gamma^i}}{|U|},
\]
so the policy differs from the uniform density by $O_R((\gamma^i)^{-1})$.

For $j\ne i$, Proposition~\ref{prop:ergodic_decomp} and the stationary sensitivity bounds imply
\[
\sup_{B_R\times U^N}|\partial_{u^j}\partial_{u^i}q_*^i|\le C_R.
\]
Thus
\[
|\Delta^{ij}(x,u)|
\le C_R((\gamma^i)^{-1}+(\gamma^j)^{-1})
\le 2C_R/\gamma,
\]
which proves the claim.
\end{proof}

The approximate joint-realization argument applies pointwise in $x$:

\begin{corollary}[Stationary Approximate Joint Realization]\label{cor:ergodic_CE}
Under the hypotheses of Proposition~\ref{prop:ergodic_large_gamma}, suppose
$\sup_{(x,u)\in\mathbb R^N\times U^N}|\Delta^{ij}(x,u)|\le\varepsilon$ for all $i\ne j$.
Then there exists $\hat\psi(\cdot;x)\in\cP(U^N)$ for each $x\in\mathbb R^N$ such that:
\begin{enumerate}
\item $D_{\mathrm{KL}}(\hat{\pi}^i(\cdot|u^{-i}; x) \| \pi^i_*(\cdot|u^{-i}; x)) \le (e^\delta - 1)^2$, where $\delta = 2(N-1)\varepsilon |U|^2$.
\item With $\varepsilon^i_{\mathrm{loc}}:=\sup_{(x,u^{-i})}\gamma^iD_{\mathrm{KL}}(\hat\pi^i(\cdot\mid x,u^{-i})\|\pi_*^i(\cdot\mid x,u^{-i}))$, one has $\varepsilon^i_{\mathrm{loc}} \le \gamma^i(e^\delta - 1)^2 \le 16\gamma^i(N-1)^2 \varepsilon^2 |U|^4$ for $\delta \le 1$.
\item If the constructed conditional policy $\hat\pi^i(\cdot\mid x,u^{-i})$ is admissible for the stationary frozen problem and
$\lim_{T\to\infty}\mathbb E^{\hat\pi}[e^{-\beta^iT}(|V^i(X_T;u^{-i})|+|V^i_{\hat\pi}(X_T;u^{-i})|)]=0$, then
$0\le V^i(x;u^{-i})-V^i_{\hat\pi}(x;u^{-i})\le\varepsilon^i_{\mathrm{loc}}/\beta^i$.
\end{enumerate}
If only a local gap bound on $B_R\times U^N$ is available, conclusions (1)--(2) hold locally.  A dynamic value bound then requires either a global gap estimate or stopping at the first exit from $B_R$.
\end{corollary}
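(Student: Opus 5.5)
The proof will run pointwise in $x$, applying the finite-horizon arguments to the stationary frozen problem. For (1) and (2), fix $x$ and treat the stationary Gibbs conditionals $\pi^i_*(u^i\mid x,u^{-i})=\exp\{q^i_*(x,u)/\gamma^i\}$ as the data of Theorem~\ref{thm:approx_CE}. The construction there uses only three ingredients: compactness and convexity of $U$ with $0\in U$, twice continuous differentiability of $q^i_*$ in $u$, and the bound $|\Delta^{ij}|\le\varepsilon$. It never uses the time variable, and it never uses the particular form of $q^i_*$ beyond $\partial_{u^i}\log\pi^i_*=\gamma^{-1}_i\partial_{u^i}q^i_*$. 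The argument is then as follows.

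First, define the coordinate-path potential $\Phi(u;x)=\sum_k\int_0^{u^k}a^k(u^1,\ldots,u^{k-1},v^k,0,\ldots,0;x)\,dv^k$ with $a^k=\gamma_k^{-1}\partial_{u^k}q^k_*(x,\cdot)$, and set $\hat\psi=e^\Phi/Z(x)$. Compactness of $U$ gives $Z(x)<\infty$. Regularity of $q^i_*$ in $u$ follows from the assumed classical solution and the stationary jet bounds, via Proposition~\ref{prop:ergodic_decomp}. I will take convexity of $U$ and $0\in U$ as implicit in the hypotheses, exactly as in Theorem~\ref{thm:approx_CE}; if not, a translation of $U$ suffices.

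Second, repeat the fundamental-theorem-of-calculus estimate $|\partial_{u^k}\Phi-a^k|\le(N-1)\varepsilon|U|$. Third, derive the normalized log-ratio bound $|\log(\pi^i_*/\hat\pi^i)|\le\delta$. Fourth, conclude the $\chi^2$-to-KL bound $(e^\delta-1)^2$. Item (2) then follows from the identity that the local entropy-regularized objective equals $-\gamma^iD_{\mathrm{KL}}(\hat\pi^i\|\pi^i_*)$, using $\int e^{q^i_*/\gamma^i}du^i=1$ and taking the supremum over $(x,u^{-i})$. The elementary inequality $(e^\delta-1)\le 2\delta$ for $\delta\le1$ gives the constant 16.

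For (3), I will adapt the proof of Corollary~\ref{cor:value_diff} on a finite horizon $[0,T]$ and then let $T\to\infty$. Apply It\^o's formula to $e^{-\beta^is}V^i(X_s^{\hat\pi};u^{-i})$, where $X^{\hat\pi}$ is driven by the profile $(\hat\pi^i,u^{-i})$. The stationary HJB~\eqref{eq:ergodic_HJB} in the form $q^i_*=\gamma^i\log\pi^i_*$ gives the drift identity
\[
-\beta^iV^i+\mathcal L^{\hat\pi}V^i=-\gamma^iD_{\mathrm{KL}}(\hat\pi^i\|\pi^i_*)+\int_U(\gamma^i\log\hat\pi^i-f^i)\hat\pi^i\,du^i .
\]
Taking expectations, with a localization by exit times of balls so that the stochastic integral is a true martingale, yields
\[
V^i(x)=\mathbb E\Big[\int_0^Te^{-\beta^is}\!\int_U(f^i-\gamma^i\log\hat\pi^i)\hat\pi^i\,du^i\,ds\Big]+\mathbb E\big[e^{-\beta^iT}V^i(X_T)\big]+\mathbb E\Big[\int_0^Te^{-\beta^is}\gamma^iD_{\mathrm{KL}}\,ds\Big].
\]
By the Markov property, the first term equals $V^i_{\hat\pi}(x)-\mathbb E[e^{-\beta^iT}V^i_{\hat\pi}(X_T)]$. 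The transversality hypothesis kills both boundary terms as $T\to\infty$. Monotone convergence applies to the nonnegative KL integral, giving the representation
\[
V^i-V^i_{\hat\pi}=\mathbb E\int_0^\infty e^{-\beta^is}\gamma^iD_{\mathrm{KL}}\,ds\in[0,\varepsilon^i_{\mathrm{loc}}/\beta^i].
\]

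The local statement follows by running the first two steps on $B_R$ only. The construction is pointwise in $x$, so it never leaves $B_R$.

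The main obstacle is the justification in (3). The localization removes the martingale term, but passing the stopping times to infinity and then $T\to\infty$ needs a uniform integrability argument. I would first try to get it from the growth bounds in Assumption~\ref{assumption coe}, the moment bounds of admissible policies, and polynomial growth of $V^i$ and $V^i_{\hat\pi}$, combined with $\beta^i>0$. If that fails, I would make it an explicit standing integrability assumption, in the same spirit as the transversality hypothesis already stated.
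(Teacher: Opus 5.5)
Your proposal is correct and follows the paper's proof essentially step for step. For (1)--(2) it applies the coordinate-path construction of Theorem~\ref{thm:approx_CE} pointwise in $x$ to the stationary score field $a^k=\frac{1}{\gamma^k}\partial_{u^k}q^k_*(x,\cdot)$, and for (3) it applies It\^o's formula to $e^{-\beta^i s}V^i(X^{\hat\pi}_s;u^{-i})$ on $[0,T]$ via $q^i_*=\gamma^i\log\pi^i_*$, then uses the transversality hypothesis and lets $T\to\infty$.

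Several things you make explicit are left unstated in the paper: identifying the running-reward term with $V^i_{\hat\pi}(x)-\mathbb E[e^{-\beta^iT}V^i_{\hat\pi}(X_T)]$ via the Markov property, using monotone convergence for the KL integral, and removing the localization. You also flag that convexity of $U$ and $0\in U$ are used implicitly although not listed among the hypotheses of Proposition~\ref{prop:ergodic_large_gamma}. Note that a change of base point handles only the latter, not convexity.
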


\begin{proof}
Fix $x$.  Apply the coordinate-path construction in the proof of Theorem~\ref{thm:approx_CE} to the stationary score field
\[
a^k(u)=\frac1{\gamma^k}\partial_{u^k}q_*^k(x,u).
\]
The bound $|\Delta^{ij}|\le\varepsilon$ gives
\[
|\partial_{u^k}\Phi-a^k|\le(N-1)\varepsilon|U|,
\qquad
|\log(\pi_*^i/\hat\pi^i)|\le\delta,
\quad \delta=2(N-1)\varepsilon|U|^2.
\]
The $\chi^2$ estimate used there yields
\[
D_{\rm KL}(\hat\pi^i\|\pi_*^i)\le(e^\delta-1)^2,
\]
and the entropy variational identity gives the pointwise loss
\[
\gamma^iD_{\rm KL}(\hat\pi^i\|\pi_*^i).
\]
Taking the supremum over $(x,u^{-i})$ gives $\varepsilon^i_{\rm loc}$ as defined in the corollary.

For the value bound, use the stationary identity
\[
q_*^i(x,u)=H^i(x,u,\partial_xV^i,D_x^2V^i)-\beta^iV^i(x;u^{-i})
=\gamma^i\log\pi_*^i(u^i\mid x,u^{-i}).
\]
Apply It\^o's formula to $e^{-\beta^is}V^i(X_s^{\hat\pi};u^{-i})$ under the frozen policy $\hat\pi^i$.  After taking expectations on $[0,T]$, the same calculation as in~\eqref{eq:gap} gives
\begin{align*}
V^i(x;u^{-i})-V^i_{\hat\pi}(x;u^{-i})
={}&\mathbb E^{\hat\pi}\int_0^T e^{-\beta^is}\gamma^iD_{\rm KL}
(\hat\pi^i(\cdot\mid X_s,u^{-i})\|\pi_*^i(\cdot\mid X_s,u^{-i}))ds\\
&+\mathbb E^{\hat\pi}\big[e^{-\beta^iT}
(V^i(X_T;u^{-i})-V^i_{\hat\pi}(X_T;u^{-i}))\big].
\end{align*}
The last expectation tends to zero by the transversality assumption in the corollary.  Letting $T\to\infty$ and using the uniform KL bound gives
\[
0\le V^i(x;u^{-i})-V^i_{\hat\pi}(x;u^{-i})
\le\varepsilon^i_{\rm loc}\int_0^\infty e^{-\beta^is}ds
=\frac{\varepsilon^i_{\rm loc}}{\beta^i}.
\]
The local statement for conclusions (1)--(2) follows by applying the same construction on the stated compact region.
\end{proof}

\section{\texorpdfstring{$q$}{q}-Learning for the Frozen-Opponent Problem}
\label{sec:qlearning}

\subsection{The \texorpdfstring{$q$}{q}-Function}

Fix player $i$ and treat $u^{-i}$ as a deterministic frozen parameter.  The drift and diffusion may depend on the full control vector $u=(u^i,u^{-i})$.

For the martingale results we use the discretely sampled action process from the erratum to \cite{JZ23}.  Given a grid $\mathcal G_{t:T}=\{t=s_0<s_1<\cdots<s_M=T\}$ and independent random seeds $\xi_0,\ldots,\xi_{M-1}$, choose a measurable sampling map $\varphi$ such that $\varphi(s,x,\xi_k)$ has density $\pi(\cdot;s,x)$.  On $[s_k,s_{k+1})$ set
\[
a_s^{\mathcal G,\pi}=\varphi(s_k,X_{s_k}^{\mathcal G,\pi},\xi_k),
\]
and let $X^{t,x,\mathcal G,\pi,u^{-i}}$ solve the original state equation with player $i$'s action equal to $a^{\mathcal G,\pi}$ and the opponents frozen at $u^{-i}$.  The state remains continuous while the action is sampled only at grid points.  This construction avoids the progressive-measurability issue of continuum independent sampling \cite{JZ23err}.

\begin{definition}\label{def:q_function}
The \emph{$q^i$-function} of player $i$ associated with policy $\pi\in\Pi$ (with the opponents' actions frozen at the deterministic value $u^{-i}\in U^{N-1}$) is:
\begin{align}\label{eq:q_def}
q^i&(t,x,u^i,u^{-i};\pi) := \partial_t\tilde{J}^i(t,x;\pi,u^{-i})\nonumber\\
&+ H^i\big(t,x,u^i,u^{-i},\partial_x\tilde{J}^i(t,x;\pi,u^{-i}),\partial_x^2\tilde{J}^i(t,x;\pi,u^{-i})\big) - \beta^i\tilde{J}^i(t,x;\pi,u^{-i}),
\end{align}
where $\tilde{J}^i(t,x;\pi,u^{-i})$ is the performance functional of player $i$ under fixed policy $\pi^i=\pi$ against the frozen opponents' profile $u^{-i}$, and $H^i$ is the conditional Hamiltonian~\eqref{eq Hamiltonian} evaluated at the spatial derivatives of $\tilde{J}^i$ and the action profile $u=(u^i,u^{-i})$.
\end{definition}

The $q^i$-function is the instantaneous advantage-rate quantity for the frozen control problem.  By Proposition~\ref{prop:q_properties}(iv), exponentiating $q^i/\gamma^i$ gives its Gibbs policy-improvement step.

\begin{proposition}[Basic Properties of the $q^i$-Function]\label{prop:q_properties}
Let $\pi\in\Pi$ be an admissible policy and let $q^i$ be its associated $q$-function defined in \eqref{eq:q_def}. Then:
\begin{enumerate}
\item[(i)] \emph{(Normalization)} For all $(t,x,u^{-i})\in[0,T]\times\mathbb{R}^N\times U^{N-1}$,
\begin{align*}
\int_U\big[q^i(t,x,u^i,u^{-i};\pi) - \gamma^i\log\pi(u^i;t,x)\big]\pi(u^i;t,x)du^i = 0.
\end{align*}
\item[(ii)] \emph{(Centering identity)} Writing
\[
\overline H^{i,\pi}(t,x,u^{-i})
:=\int_U\big[H^i(t,x,z^i,u^{-i},\partial_x\tilde J^i,\partial_x^2\tilde J^i)-\gamma^i\log\pi(z^i;t,x)\big]\pi(z^i;t,x)dz^i,
\]
one has
\begin{align}\label{eq:q_PDE}
q^i(t,x,u^i,u^{-i};\pi)
=H^i(t,x,u^i,u^{-i},\partial_x\tilde J^i,\partial_x^2\tilde J^i)-\overline H^{i,\pi}(t,x,u^{-i}).
\end{align}
\item[(iii)] \emph{(Continuity)} If $H^i$ is continuous in $(t,x,u)$ and the derivatives appearing in~\eqref{eq:q_def} are jointly continuous in $(t,x,u^{-i})$, then $q^i$ is continuous in $(t,x,u^i,u^{-i})$.
\item[(iv)] \emph{(Policy Improvement Representation)} The improved policy satisfies
\[
\pi'(u^i;t,x,u^{-i})\propto\exp\{q^i(t,x,u^i,u^{-i};\pi)/\gamma^i\}.
\]
\end{enumerate}
\end{proposition}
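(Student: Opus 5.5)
The plan is to derive all four items from one fact: the PDE satisfied by the performance functional $\tilde J^i(\cdot;\pi,u^{-i})$ of the frozen one-player problem. Fix $u^{-i}$ and treat it as a deterministic parameter. Under the fixed policy $\pi$, the frozen problem is a Markov diffusion with averaged coefficients $\int_U b^k(t,x,z^i,u^{-i})\pi(z^i;t,x)dz^i$ and $\int_U(\sigma^k)^2\pi\,dz^i$. It has running reward $\int_U[f^i-\gamma^i\log\pi]\pi\,dz^i$ and terminal reward $g^i$. Assuming, as in Definition~\ref{def:q_function}, that $\tilde J^i$ is $C^{1,2}$, the Feynman--Kac formula gives the linear equation
\[
\partial_t\tilde J^i-\beta^i\tilde J^i+\int_U\big[H^i(t,x,z^i,u^{-i},\partial_x\tilde J^i,\partial_x^2\tilde J^i)-\gamma^i\log\pi(z^i;t,x)\big]\pi(z^i;t,x)dz^i=0 .
\]
The integrand collapses to $H^i$ because $H^i$ is affine in $(p,A)$, with the $\sigma^2$ entering linearly. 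To justify this equation, I would apply It\^o's formula to $e^{-\beta^i(s-t)}\tilde J^i(s,X_s)$ along the averaged frozen diffusion. I would then use the Markov property, $\tilde J^i(t,x)=\mathbb E[\int_t^{t+h}\cdots+e^{-\beta^ih}\tilde J^i(t+h,X_{t+h})]$, divide by $h$, and let $h\downarrow0$. Admissibility (Definition~\ref{def:admissible_policy}) supplies the integrability needed to pass to the limit.

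\textbf{(ii) Centering identity.} The PDE says $\partial_t\tilde J^i-\beta^i\tilde J^i=-\overline H^{i,\pi}(t,x,u^{-i})$. Substituting this into \eqref{eq:q_def} gives \eqref{eq:q_PDE} immediately.

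\textbf{(i) Normalization.} I would integrate \eqref{eq:q_PDE} minus $\gamma^i\log\pi$ against $\pi(\cdot;t,x)$. Since $\overline H^{i,\pi}$ does not depend on $u^i$ and $\pi$ has unit mass, the integral equals $\overline H^{i,\pi}-\overline H^{i,\pi}=0$. This uses the integrability of $|\log\pi|\pi$ and of the moments from Definition~\ref{def:admissible_policy}(2), together with the growth bounds in Assumption~\ref{assumption coe}, to ensure that every integral is finite.

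\textbf{(iii) Continuity.} By \eqref{eq:q_def}, $q^i$ is obtained by composing $H^i$, which is continuous in $(t,x,u,p,A)$ by Assumption~\ref{assumption coe}(1) and is polynomial in $(p,A)$, with the jets of $\tilde J^i$, which are jointly continuous in $(t,x,u^{-i})$ by hypothesis, and then adding $\partial_t\tilde J^i-\beta^i\tilde J^i$. Hence $q^i$ is continuous in $(t,x,u^i,u^{-i})$.

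\textbf{(iv) Policy improvement.} At fixed $(t,x,u^{-i})$, the improvement step maximizes $\mu\mapsto\int_U[H^i(\cdot,\partial_x\tilde J^i,\partial_x^2\tilde J^i)-\gamma^i\log\mu]\mu\,du^i$, with the jets of the current $\tilde J^i$ held fixed. This is the same entropy variational formula that gave \eqref{eq optimal policy}. The maximizer is proportional to $\exp\{H^i/\gamma^i\}$, and the normalizing constant is finite because $U$ is compact or the partition function is assumed finite. By \eqref{eq:q_PDE}, $H^i$ and $q^i$ differ by a term independent of $u^i$, which cancels in the normalization. Hence $\pi'\propto\exp\{q^i/\gamma^i\}$. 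If a genuine improvement statement is also wanted, I would compare $\tilde J^i(\pi')$ with $\tilde J^i(\pi)$ by an It\^o argument in the style of Corollary~\ref{cor:value_diff}. That argument needs $\pi'$ to be admissible, which I would take as a hypothesis.

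The main obstacle is the rigorous derivation of the Feynman--Kac PDE. It requires enough regularity of $\tilde J^i$ and the true-martingale property of the stochastic integral. I would handle this by assuming classical regularity, as Section~\ref{subsec:assessment} does, and by using the polynomial moment bounds to control the local martingale.
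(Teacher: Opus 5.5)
Your proposal is correct and follows essentially the paper's argument: everything rests on the fixed-policy evaluation equation $\partial_t\tilde J^i-\beta^i\tilde J^i=-\overline H^{i,\pi}$ substituted into \eqref{eq:q_def}, with (iii) read off from the definition and (iv) following because $q^i-H^i$ does not depend on $u^i$ and so cancels in the Gibbs normalization. The differences are cosmetic: you derive (i) from (ii) rather than directly from the evaluation equation, and you sketch the Feynman--Kac/It\^o justification of that equation, which the paper simply takes for granted under classical regularity.
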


\begin{proof}
(i) For fixed $\pi$, the evaluation equation is
\[
\partial_t\tilde J^i-\beta^i\tilde J^i
+\int_U\big[H^i(t,x,u^i,u^{-i},\partial_x\tilde J^i,\partial_x^2\tilde J^i)-\gamma^i\log\pi(u^i;t,x)\big]\pi(u^i;t,x)du^i=0.
\]
Insert the definition~\eqref{eq:q_def}.  Since $\partial_t\tilde J^i-\beta^i\tilde J^i$ is independent of $u^i$ and $\int_U\pi\,du^i=1$, the left-hand side becomes $\int_U[q^i-\gamma^i\log\pi]\pi\,du^i$, proving (i).

(ii) The same evaluation equation gives
\[
\partial_t\tilde J^i-\beta^i\tilde J^i=-\overline H^{i,\pi}.
\]
Substitution into~\eqref{eq:q_def} yields~\eqref{eq:q_PDE}.

(iii) This follows directly from~\eqref{eq:q_def} and the stated joint continuity.

(iv) From \eqref{eq:q_def}, $H^i = q^i - \partial_t\tilde{J}^i + \beta^i\tilde{J}^i$, where $-\partial_t\tilde{J}^i + \beta^i\tilde{J}^i$ is independent of $u^i$. Hence $\exp\{H^i/\gamma^i\}=\exp\{q^i/\gamma^i\}\cdot\exp\{(-\partial_t\tilde{J}^i+\beta^i\tilde{J}^i)/\gamma^i\}$, and the second factor cancels after normalization, yielding $\pi'\propto\exp\{q^i/\gamma^i\}$.
\end{proof}

\subsection{Policy Improvement}

\begin{theorem}[Policy Improvement]\label{thm policy improvement}
For any $\pi\in\Pi$, define the improved policy
\[
\pi'(\cdot;t,x,u^{-i})\propto\exp\bigg\{\frac{1}{\gamma^i}H^i\big(t,x,\cdot,u^{-i},\partial_x\tilde{J}^i(t,x;\pi,u^{-i}),\partial_x^2\tilde{J}^i(t,x;\pi,u^{-i})\big)\bigg\}.
\]
If, for each frozen parameter $u^{-i}$, the resulting Markov policy $\pi'(\cdot;\,u^{-i})$ belongs to $\Pi$, then $\tilde{J}^i(t,x;\pi',u^{-i})\geq\tilde{J}^i(t,x;\pi,u^{-i})$ for all $(t,x,u^{-i})$.

For a fixed frozen parameter $u^{-i}$, if $\mathcal{I}^i_{u^{-i}}(\pi)=\pi$, where
\begin{align*}
\mathcal{I}^i_{u^{-i}}(\pi)(u^i;t,x) := \frac{\exp\big\{\frac{1}{\gamma^i}H^i(t,x,u^i,u^{-i},\partial_x\tilde{J}^i,\partial_x^2\tilde{J}^i)\big\}}{\int_U\exp\big\{\frac{1}{\gamma^i}H^i(t,x,u^i,u^{-i},\partial_x\tilde{J}^i,\partial_x^2\tilde{J}^i)\big\}du^i},
\end{align*}
then $\pi$ is the optimal policy $\pi^i_*$.
\end{theorem}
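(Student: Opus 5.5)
The plan is to follow the standard soft policy-improvement argument of \cite{WZZ20,JZ23}, carried out for one fixed frozen parameter $u^{-i}$. For that $u^{-i}$ the problem is a single-agent entropy-regularized control problem, so the opponents enter only as fixed data in $H^i$. Write $J:=\tilde J^i(\cdot;\pi,u^{-i})$ and $J':=\tilde J^i(\cdot;\pi',u^{-i})$, and assume $J\in C^{1,2}$ with the polynomial growth needed for It\^o's formula.

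\textbf{Step 1 (pointwise Gibbs inequality).} Fix $(t,x)$. By the entropy variational formula, $\pi'(\cdot;t,x,u^{-i})$ maximizes
\[
\mu\mapsto\int_U\big[H^i(t,x,u^i,u^{-i},\partial_xJ,\partial_x^2J)-\gamma^i\log\mu\big]\mu\,du^i .
\]
In particular its value is at least the value at $\mu=\pi(\cdot;t,x)$. By the evaluation equation in the proof of Proposition~\ref{prop:q_properties}(i), the value at $\pi$ equals $-(\partial_tJ-\beta^iJ)$. Hence
\[
\partial_tJ-\beta^iJ+\int_U\big[H^i(\cdot,\partial_xJ,\partial_x^2J)-\gamma^i\log\pi'\big]\pi'\,du^i\ \ge\ 0 .
\]
The gap is in fact $\gamma^iD_{\rm KL}(\pi\|\pi')\ge0$, which is the same identity used for \eqref{eq:key-id}.

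\textbf{Step 2 (It\^o comparison).} Apply It\^o's formula to $e^{-\beta^i(s-t)}J(s,X_s)$, where $X$ is the exploratory state driven by $(\pi',u^{-i})$. As in the proof of Corollary~\ref{cor:value_diff}, the generator satisfies $\mathcal L^{\pi'}J=\int_U(H^i-f^i)\pi'\,du^i$. Combined with Step 1, the drift of $e^{-\beta^i(s-t)}J(s,X_s)$ is bounded below by $-e^{-\beta^i(s-t)}\int_U(f^i-\gamma^i\log\pi')\pi'\,du^i$. Integrate from $t$ to $T$, take expectations, and use the terminal condition $J(T,\cdot)=g^i$. This gives $J(t,x)\le J'(t,x)$. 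Since $u^{-i}$ was arbitrary, the first claim follows.

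The main obstacle is justifying that the stochastic integral is a true martingale. I would handle it by localizing with exit times from balls $B_R$. The admissibility of $\pi'$ (it lies in $\Pi$ by hypothesis) gives moment bounds on $X$. Together with the polynomial growth of $J$ and its derivatives, and the entropy bound in Definition~\ref{def:admissible_policy}, these moment bounds let me pass to the limit $R\to\infty$ by dominated convergence.

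\textbf{Step 3 (fixed point implies optimality).} Suppose $\mathcal I^i_{u^{-i}}(\pi)=\pi$. Then $\pi$ is the Gibbs maximizer of its own Hamiltonian. The evaluation equation therefore becomes
\[
\partial_tJ-\beta^iJ+\sup_{\mu}\int_U[H^i-\gamma^i\log\mu]\mu\,du^i=0 ,
\]
so $J$ is a classical solution of the frozen HJB \eqref{eq HJB-explore} with terminal value $g^i$. A verification argument then shows $J=\tilde V^i(\cdot;u^{-i})$, so $\pi$ attains the value and $\pi=\pi^i_*$. The verification is Step 2 run against an arbitrary admissible $\tilde\pi$: the pointwise inequality now holds with $\le$, which yields $\tilde J^i(t,x;\tilde\pi,u^{-i})\le J(t,x)$. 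The same localization as in Step 2 applies. Uniqueness of the optimizer follows from strict concavity of the entropy term.
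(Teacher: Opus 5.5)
Your proposal is correct and follows the paper's proof. The paper likewise applies It\^o's formula to $e^{-\beta^i s}\tilde J^i(s,X_s^{\pi'};\pi,u^{-i})$ along the improved policy's trajectory. It uses the pointwise Gibbs inequality $\int_U[q^i-\gamma^i\log\pi']\pi'\,du^i\ge\int_U[q^i-\gamma^i\log\pi]\pi\,du^i=0$, which is your Step~1 written with $q^i$ in place of $\partial_tJ-\beta^iJ+H^i$. It then removes the localization via the exit times $\tau_R$.

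The only difference is in the fixed-point part. The paper invokes uniqueness of classical solutions of the frozen HJB to get $\tilde J^i=\tilde V^i$. Your verification argument (Step~2 rerun against an arbitrary admissible policy) establishes that identification directly, so it does not rely on a separate uniqueness theorem.
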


\begin{proof}
The argument adapts the single-agent policy improvement proof of \cite[Lemma~13]{JZ23} to the conditional setting. Apply It\^o's formula to $e^{-\beta^i s}\tilde{J}^i(s,X_s^{t,x,\pi'};\pi,u^{-i})$ from $t$ to $T$. The key inequality is: for any $(s,y)$,
\begin{align*}
\int_U\big[&q^i(s,y,u^i,u^{-i};\pi) - \gamma^i\log\pi'(u^i;s,y)\big]\pi'(u^i;s,y)du^i \\
&\geq \int_U\big[q^i(s,y,u^i,u^{-i};\pi) - \gamma^i\log\pi(u^i;s,y)\big]\pi(u^i;s,y)du^i = 0,
\end{align*}
since $\pi'=\mathcal{I}^i_{u^{-i}}(\pi)$ is the unique maximizer of $\pi'\mapsto\int_U[q^i(\cdot,u^i,u^{-i};\pi)-\gamma^i\log\pi']\pi'du^i$ by the classical entropy optimization result (\cite{JZ23}, Lemma~13). Taking expectations and sending the localizing sequence $R\to\infty$ via stopping times $\tau_R := \inf\{s : |X_s| \ge R\}$ yields $\tilde{J}^i(t,x;\pi,u^{-i})\leq\tilde{J}^i(t,x;\pi',u^{-i})$.

For the fixed-point statement, fix $u^{-i}$.  If $\mathcal{I}^i_{u^{-i}}(\pi)=\pi$, then $\pi$ achieves the supremum in the frozen HJB~\eqref{eq HJB-explore}.  Hence $\tilde J^i(\cdot;\pi,u^{-i})$ solves~\eqref{eq explore HJB 2}; uniqueness in the stated classical solution class gives $\tilde J^i=\tilde V^i$ and $\pi=\pi_*^i$.
\end{proof}

The improved policy can be expressed via the $q^i$-function:
\begin{align}\label{eq:improved_policy_q}
\pi'(\cdot;t,x,u^{-i})\propto\exp\bigg\{\frac{1}{\gamma^i}q^i(t,x,\cdot,u^{-i};\pi)\bigg\}.
\end{align}

\subsection{Weak Martingale Characterization}

The $q^i$-function has the following martingale characterization.

\begin{theorem}[Martingale Characterization of $q^i$]\label{thm martingale represent 1}
Let $\pi\in\Pi$, and let $\hat q^i:[0,T]\times\mathbb R^N\times U^N\to\mathbb R$ be continuous.  Then $\hat q^i=q^i(\cdot;\pi)$ if and only if, for every $(t,x,u^{-i})$ and every time grid $\mathcal G_{t:T}$, the process
\begin{align}\label{eq:martingale_cond}
M_s^{\mathcal G}:={}&e^{-\beta^i s}\tilde J^i(s,X_s^{t,x,\mathcal G,\pi,u^{-i}};\pi,u^{-i})\\
&+\int_t^s e^{-\beta^i r}\big[f^i(r,X_r^{t,x,\mathcal G,\pi,u^{-i}},a_r^{\mathcal G,\pi})
-\hat q^i(r,X_r^{t,x,\mathcal G,\pi,u^{-i}},a_r^{\mathcal G,\pi},u^{-i})\big]dr
\end{align}
is a martingale.  In either case,
\begin{align}\label{eq:q_normalization}
\int_U\big[q^i(t,x,u^i,u^{-i};\pi)-\gamma^i\log\pi(u^i;t,x)\big]\pi(u^i;t,x)du^i=0.
\end{align}
\end{theorem}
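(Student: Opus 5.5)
The proof adapts the martingale characterization of Jia--Zhou, in its erratum form with grid-sampled actions \cite{JZ23err}, to the frozen-opponent problem. Throughout, $u^{-i}$ is fixed and enters only as a deterministic parameter in the coefficients. I will assume enough regularity that $\tilde J^i(\cdot;\pi,u^{-i})\in C^{1,2}$ with polynomial growth, so that It\^o's formula applies. The normalization identity~\eqref{eq:q_normalization} needs no new work: it is Proposition~\ref{prop:q_properties}(i) applied to $q^i(\cdot;\pi)$. If $\hat q^i=q^i$, it therefore holds automatically.

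\emph{Sufficiency} ($\hat q^i=q^i\Rightarrow$ martingale). On each interval $[s_k,s_{k+1})$ the sampled action $a=a^{\mathcal G,\pi}_{s_k}$ is $\mathscr F_{s_k}$-measurable and constant. The state therefore solves the original SDE with control $(a,u^{-i})$. Apply It\^o's formula to $e^{-\beta^i s}\tilde J^i(s,X_s;\pi,u^{-i})$. Its drift is
\[
e^{-\beta^i s}\bigl[\partial_t\tilde J^i+H^i(s,X_s,a,u^{-i},\partial_x\tilde J^i,\partial_x^2\tilde J^i)-f^i(s,X_s,a)-\beta^i\tilde J^i\bigr],
\]
which equals $e^{-\beta^i s}[q^i(s,X_s,a,u^{-i};\pi)-f^i(s,X_s,a)]$ by~\eqref{eq:q_def}. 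Adding $\int e^{-\beta^i r}(f^i-q^i)\,dr$ cancels this drift exactly, so $M^{\mathcal G}$ is a local martingale. It is a true martingale because the integrand of the stochastic integral has polynomial growth in $X$ and $a$, and admissibility gives moment bounds for both. The pieces on successive grid intervals concatenate, since the martingale property is checked interval by interval with the $\mathscr F_{s_k}$-measurable action.

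\emph{Necessity} (martingale for every grid $\Rightarrow\hat q^i=q^i$). Subtracting the martingale built from $q^i$, which exists by sufficiency, shows that
\[
\int_t^s e^{-\beta^i r}\bigl[q^i-\hat q^i\bigr](r,X_r,a_r,u^{-i})\,dr
\]
is a continuous martingale of finite variation, hence identically zero. Differentiating in $s$ gives $q^i=\hat q^i$ at $(r,X_r,a_r)$ for a.e.\ $r$, almost surely. Take $r\downarrow t$ inside the first grid interval, where $a_r=a_t\sim\pi(\cdot;t,x)$ and $X_r\to x$. Continuity of both functions then gives $q^i(t,x,a_t,u^{-i})=\hat q^i(t,x,a_t,u^{-i})$ almost surely. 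Because $\pi(\cdot;t,x)$ has full support on $U$ and both functions are continuous in $u^i$, they agree for every $u^i\in U$. Varying $(t,x,u^{-i})$ gives equality everywhere.

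\emph{Main obstacle.} The limiting step in the necessity direction is the delicate part. It needs joint continuity of $q^i$ in $(t,x,u^i)$, which Proposition~\ref{prop:q_properties}(iii) supplies, together with the a.s.\ continuity of $X$. One must also pass from "equal $\pi$-a.e." to "equal everywhere", which uses $\operatorname{supp}\pi=U$ from the definition of a stochastic policy. A secondary technical point is upgrading the local martingale to a true martingale on $[t,T]$. I would handle this by localizing with $\tau_R=\inf\{s:|X_s|\ge R\}$ and using the moment bounds from admissibility to pass $R\to\infty$, as in the proof of Theorem~\ref{thm policy improvement}.
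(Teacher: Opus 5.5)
Your proposal is correct and follows the paper's proof. In both, It\^o's formula on each grid cell, together with~\eqref{eq:q_def}, kills the drift; the continuous finite-variation (local) martingale $\int_t^s e^{-\beta^i r}(q^i-\hat q^i)\,dr$ must then vanish; continuity plus full support of the first sampled action gives $\hat q^i=q^i$; and the normalization comes from Proposition~\ref{prop:q_properties}(i).

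The only difference is cosmetic. You let $r\downarrow t$ in the a.e.\ identity and use path continuity at the initial point, whereas the paper argues by contradiction on a positive-probability event on which the path stays near $x_0$ during a short first grid cell. Both rest on the same continuity and full-support facts.
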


\begin{proof}
Assume first that $\hat q^i=q^i$.  On each grid interval $[s_k,s_{k+1})$ the sampled action $a^{\mathcal G,\pi}$ is constant and adapted.  It\^o's formula therefore applies to $e^{-\beta^i r}\tilde J^i(r,X_r^{t,x,\mathcal G,\pi,u^{-i}};\pi,u^{-i})$.  Suppressing the fixed arguments $(\pi,u^{-i})$ and writing $X_r=X_r^{t,x,\mathcal G,\pi,u^{-i}}$, we obtain
\begin{align*}
&d\big(e^{-\beta^i r}\tilde J^i(r,X_r)\big)
+e^{-\beta^i r}\big[f^i(r,X_r,a_r^{\mathcal G,\pi})-q^i(r,X_r,a_r^{\mathcal G,\pi},u^{-i};\pi)\big]dr\\
&\quad=e^{-\beta^i r}\Big[\partial_t\tilde J^i
+H^i(r,X_r,a_r^{\mathcal G,\pi},u^{-i},\partial_x\tilde J^i,\partial_x^2\tilde J^i)
-\beta^i\tilde J^i-q^i(r,X_r,a_r^{\mathcal G,\pi},u^{-i};\pi)\Big]dr\\
&\qquad\quad+e^{-\beta^i r}\partial_x\tilde J^i(r,X_r)\sigma(r,X_r,a_r^{\mathcal G,\pi},u^{-i})\,dW_r.
\end{align*}
The drift is zero by~\eqref{eq:q_def}.  After localization, the stochastic integral is a martingale; the imposed growth assumptions permit removal of the localization.  Integrating from $t$ to $s$ proves the martingale property of~\eqref{eq:martingale_cond}.

Conversely, assume~\eqref{eq:martingale_cond} is a martingale for every grid.  Applying It\^o's formula as above, now with $\hat q^i$, shows that
\[
A_s:=\int_t^s e^{-\beta^i r}\big[q^i(r,X_r,a_r^{\mathcal G,\pi},u^{-i};\pi)-\hat q^i(r,X_r,a_r^{\mathcal G,\pi},u^{-i})\big]dr
\]
is both a continuous finite-variation process and a local martingale.  Hence $A_s=0$ a.s. for every $s$.

Set $F=q^i-\hat q^i$.  If $F(t_0,x_0,a_0,u^{-i})>0$, continuity gives $h>0$, neighborhoods $B_x$ of $x_0$ and $B_a$ of $a_0$, and $c>0$ such that $F(r,y,a,u^{-i})\ge c$ for $r\in[t_0,t_0+h]$, $y\in B_x$, and $a\in B_a$.  Choose a grid whose first interval is $[t_0,t_0+h]$.  At $t_0$ the first sampled action has density $\pi(\cdot;t_0,x_0)$; full support gives
\[
\mathbb P(a_{t_0}^{\mathcal G,\pi}\in B_a)>0.
\]
On that event the same action is held over the first grid interval.  Conditional on any $a\in B_a$, continuity of the SDE solution and local boundedness of the coefficients imply, after reducing $h$ if necessary,
\[
\mathbb P\big(X_r\in B_x\text{ for all }r\in[t_0,t_0+h]\mid a_{t_0}^{\mathcal G,\pi}=a\big)>0.
\]
Thus with positive probability $F(r,X_r,a_r^{\mathcal G,\pi},u^{-i})\ge c$ throughout the first interval, and $A_{t_0+h}>0$, contradicting $A\equiv0$.  The case $F(t_0,x_0,a_0,u^{-i})<0$ is identical with the signs reversed.  Therefore $F\equiv0$.

Finally,~\eqref{eq:q_normalization} follows from the fixed-policy evaluation equation exactly as in Proposition~\ref{prop:q_properties}(i).
\end{proof}

\begin{corollary}[Joint Characterization]\label{coro martingale representation 2}
Let $\hat J^i\in C^{1,2}$ have polynomial growth and let $\hat q^i$ be continuous.  Suppose
\[
\hat J^i(T,x,u^{-i})=g^i(x),\qquad
\int_U[\hat q^i(t,x,u^i,u^{-i})-\gamma^i\log\pi(u^i;t,x)]\pi(u^i;t,x)du^i=0.
\]
Then $\hat J^i=\tilde J^i(\cdot;\pi,\cdot)$ and $\hat q^i=q^i(\cdot;\pi)$ if and only if, for every $(t,x,u^{-i})$ and every grid $\mathcal G_{t:T}$, the process obtained from~\eqref{eq:martingale_cond} by replacing $(\tilde J^i,q^i)$ with $(\hat J^i,\hat q^i)$ is a martingale.
\end{corollary}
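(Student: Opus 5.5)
We prove the two implications separately, reducing each to Theorem~\ref{thm martingale represent 1} once we know that $\hat J^i$ is the true performance functional.

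\emph{Necessity.} If $\hat J^i=\tilde J^i(\cdot;\pi,\cdot)$ and $\hat q^i=q^i(\cdot;\pi)$, the replaced process is literally~\eqref{eq:martingale_cond}. Theorem~\ref{thm martingale represent 1} then says it is a martingale for every $(t,x,u^{-i})$ and every grid. Nothing further is needed.

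\emph{Sufficiency.} Assume the martingale property for $(\hat J^i,\hat q^i)$ on every grid. The plan has three steps.

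\emph{Step 1 (pointwise identity for $\hat q^i$).} Since $\hat J^i\in C^{1,2}$, we apply It\^o's formula to $e^{-\beta^i r}\hat J^i(r,X_r;u^{-i})$ along the sampled-action state $X=X^{t,x,\mathcal G,\pi,u^{-i}}$. On each grid interval the sampled action is constant and adapted, so this is legitimate. The replaced process then equals a stochastic integral plus
\[
\int_t^s e^{-\beta^i r}\big[\partial_t\hat J^i+H^i(r,X_r,a_r^{\mathcal G,\pi},u^{-i},\partial_x\hat J^i,\partial_x^2\hat J^i)-\beta^i\hat J^i-\hat q^i(r,X_r,a_r^{\mathcal G,\pi},u^{-i})\big]dr .
\]
Polynomial growth of $\hat J^i$, together with the moment bounds, lets us remove the localization, so the stochastic integral is a true martingale. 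The finite-variation part is therefore a continuous local martingale of finite variation, hence identically zero. We now argue exactly as in the converse half of Theorem~\ref{thm martingale represent 1}. Suppose the continuous integrand $F$ were strictly positive at some $(t_0,x_0,a_0)$. Choose a grid whose first interval starts at $t_0$. Full support of $\pi$ gives positive probability that $a_{t_0}^{\mathcal G,\pi}$ lies near $a_0$, and continuity of paths gives positive probability that $X$ stays near $x_0$ over a short interval. On this event the drift integral is strictly positive, a contradiction; the negative case is symmetric. Hence
\[
\hat q^i=\partial_t\hat J^i+H^i(\cdot,\partial_x\hat J^i,\partial_x^2\hat J^i)-\beta^i\hat J^i
\]
holds everywhere.

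\emph{Step 2 (identify $\hat J^i$).} Integrate the identity from Step 1 against $\pi(u^i;t,x)\,du^i$ and use the normalization hypothesis $\int_U[\hat q^i-\gamma^i\log\pi]\pi\,du^i=0$. The term $\partial_t\hat J^i-\beta^i\hat J^i$ does not depend on $u^i$, so this gives
\[
\partial_t\hat J^i-\beta^i\hat J^i+\int_U\big[H^i(\cdot,\partial_x\hat J^i,\partial_x^2\hat J^i)-\gamma^i\log\pi\big]\pi\,du^i=0,
\]
with terminal condition $\hat J^i(T,\cdot)=g^i$. This is the linear policy-evaluation PDE for the frozen problem. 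Its generator is the averaged one, with $\tilde b$ and $\tilde\sigma$ built from $\pi$ and the frozen $u^{-i}$. A Feynman--Kac argument along the exploratory state~\eqref{eq exploratory state} then identifies the solution. We apply It\^o's formula to $e^{-\beta^i r}\hat J^i(r,X^{t,x,\pi}_r)$ and localize with $\tau_R$. Polynomial growth of $\hat J^i$ and the second-moment bound on the exploratory state give uniform integrability, which lets us send $R\to\infty$. This yields $\hat J^i=\tilde J^i(\cdot;\pi,u^{-i})$ via~\eqref{eq exploratory value}.

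\emph{Step 3 (identify $\hat q^i$).} Substituting $\hat J^i=\tilde J^i$ into the identity from Step 1 gives $\hat q^i=q^i(\cdot;\pi)$ by definition~\eqref{eq:q_def}.

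The main obstacle is Step 2. The martingale hypothesis lives on the sampled-action dynamics, whereas $\tilde J^i$ is defined through the averaged exploratory diffusion. We avoid comparing the two processes by first extracting the pointwise PDE in Step 1 and only then invoking Feynman--Kac on the averaged dynamics. The technical care goes into the uniform integrability needed to remove localization in both It\^o arguments, which rests on the polynomial growth of $\hat J^i$.
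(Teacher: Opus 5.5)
Your proposal is correct and follows the paper's proof essentially step for step: It\^o's formula on the sampled-action state, then the positivity argument giving $\hat q^i=\partial_t\hat J^i+H^i-\beta^i\hat J^i$ pointwise, then the normalization turning this into the fixed-policy evaluation PDE, and finally identification of $\hat J^i$ and $\hat q^i$. The only difference is that where the paper simply invokes uniqueness in the stated regularity class, you carry out the Feynman--Kac verification on the averaged diffusion yourself; that is the standard proof of the uniqueness, and it has the small advantage of not presupposing that $\tilde J^i$ is itself a classical solution.

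There are two slips, neither of which affects the argument. First, $\hat J^i$, $f^i$ and the entropy term may grow like $|x|^p$ with $p>2$, so removing the localization needs $p$-th moment bounds on the exploratory state rather than only the second-moment bound; these follow from the linear growth of $\tilde b,\tilde\sigma$. Second, in Step~1, polynomial growth of $\hat J^i$ does not make the stochastic integral a true martingale, because its integrand involves $\partial_x\hat J^i$; the claim is not needed, since a local martingale already suffices for the finite-variation argument.
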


\begin{proof}
If $(\hat J^i,\hat q^i)=(\tilde J^i,q^i)$, the claim is Theorem~\ref{thm martingale represent 1}.

For the converse define
\[
q_{\hat J}^i(t,x,u):=\partial_t\hat J^i(t,x,u^{-i})
+H^i(t,x,u,\partial_x\hat J^i,\partial_x^2\hat J^i)-\beta^i\hat J^i(t,x,u^{-i}).
\]
Applying It\^o's formula to $e^{-\beta^i s}\hat J^i(s,X_s^{\mathcal G,\pi})$ and subtracting the assumed martingale shows that
\[
\int_t^s e^{-\beta^i r}\big[q_{\hat J}^i(r,X_r^{\mathcal G,\pi},a_r^{\mathcal G,\pi},u^{-i})-
\hat q^i(r,X_r^{\mathcal G,\pi},a_r^{\mathcal G,\pi},u^{-i})\big]dr=0
\]
a.s. for every $s$ and every grid.  If the continuous function $q_{\hat J}^i-\hat q^i$ were positive or negative at some $(t_0,x_0,a_0,u^{-i})$, choose a first grid cell $[t_0,t_0+h]$, sample an action in a small neighborhood of $a_0$, and use continuity of the state to keep it in a small neighborhood of $x_0$ during that cell.  This event has positive probability by full support of $\pi$, and on it the displayed integral has a strict sign, a contradiction.  Hence
\[
\hat q^i=q_{\hat J}^i
\quad\text{pointwise.}
\]
Substitute this identity into the normalization constraint.  Since $\partial_t\hat J^i-\beta^i\hat J^i$ is independent of $u^i$, we obtain
\begin{align*}
0={}&\partial_t\hat J^i(t,x,u^{-i})-\beta^i\hat J^i(t,x,u^{-i})\\
&+\int_U\big[H^i(t,x,u^i,u^{-i},\partial_x\hat J^i,\partial_x^2\hat J^i)-\gamma^i\log\pi(u^i;t,x)\big]\pi(u^i;t,x)du^i.
\end{align*}
Together with $\hat J^i(T,x,u^{-i})=g^i(x)$, this is precisely the fixed-policy evaluation PDE for $\pi$.  Uniqueness in the stated regularity class gives $\hat J^i=\tilde J^i(\cdot;\pi,u^{-i})$, and the pointwise identity above then gives $\hat q^i=q^i(\cdot;\pi)$.
\end{proof}

\subsection{Optimal \texorpdfstring{$q^i$}{q}-Function}

Define the optimal $q^i$-function as the $q^i$-function associated with the optimal policy $\pi^i_*$:
\begin{align}\label{eq q*}
q^i_*(t,x,u^i,u^{-i})& := \partial_t\tilde{V}^i(t,x;u^{-i})\nonumber\\
&+ H^i(t,x,u^i,u^{-i},\partial_x\tilde{V}^i(t,x;u^{-i}),\partial_x^2\tilde{V}^i(t,x;u^{-i})) - \beta^i\tilde{V}^i(t,x;u^{-i}).
\end{align}

\begin{proposition}\label{prop:q_star_normalization}
For the optimal $q^i$-function defined in \eqref{eq q*},
\begin{align*}
\int_U\exp\bigg\{\frac{1}{\gamma^i}q^i_*(t,x,u^i,u^{-i})\bigg\}du^i = 1,\quad\forall(t,x,u^{-i})\in[0,T]\times\mathbb{R}^N\times U^{N-1},
\end{align*}
so
\begin{align*}
\pi^i_*(u^i;t,x,u^{-i}) = \exp\bigg\{\frac{1}{\gamma^i}q^i_*(t,x,u^i,u^{-i})\bigg\}.
\end{align*}
\end{proposition}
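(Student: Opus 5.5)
The plan is to read the claim directly off the frozen HJB equation~\eqref{eq explore HJB 2}, which we assume has a classical solution $\tilde V^i(\cdot;u^{-i})$ with finite partition function for each frozen parameter $u^{-i}$. Fix $(t,x,u^{-i})$ and abbreviate $H^i(u^i):=H^i(t,x,u^i,u^{-i},\partial_x\tilde V^i,\partial_x^2\tilde V^i)$, with the jets evaluated at $(t,x;u^{-i})$. By the definition~\eqref{eq q*},
\[
q^i_*(t,x,u^i,u^{-i})=H^i(u^i)+c(t,x,u^{-i}),\qquad c:=\partial_t\tilde V^i-\beta^i\tilde V^i .
\]
The point to use is that $c$ does not depend on the own action $u^i$, because $\tilde V^i$ is indexed only by the frozen profile $u^{-i}$.

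The first step is to exponentiate and factor out the constant:
\[
\int_U e^{q^i_*/\gamma^i}\,du^i=e^{c/\gamma^i}\int_U e^{H^i(u^i)/\gamma^i}\,du^i .
\]
Equation~\eqref{eq explore HJB 2} gives
\[
\gamma^i\log\int_U e^{H^i/\gamma^i}\,du^i=-c,
\]
so the integral $\int_U e^{H^i/\gamma^i}\,du^i$ equals $e^{-c/\gamma^i}$. The product of the two factors is therefore $1$, which proves the first identity.

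For the second identity, recall that the optimal frozen policy is the maximizer of the supremum in~\eqref{eq HJB-explore}. By the entropy variational formula this maximizer is the Gibbs density, as in~\eqref{eq optimal policy} or equivalently Proposition~\ref{prop:q_properties}(iv) with $\pi=\pi^i_*$, so $\pi^i_*\propto e^{H^i/\gamma^i}$. Multiplying the numerator and the normalizing constant by $e^{c/\gamma^i}$ does not change the ratio, hence $\pi^i_*\propto e^{q^i_*/\gamma^i}$. Since the normalizing constant was just shown to equal $1$, we obtain $\pi^i_*=e^{q^i_*/\gamma^i}$.

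There is no real obstacle in this argument. The only care needed is to justify that the supremum in~\eqref{eq HJB-explore} equals the log-partition form~\eqref{eq explore HJB 2}, and that it is attained by the Gibbs density; both hold whenever the partition function is finite, which is automatic for compact $U$ and otherwise is the standing coercivity assumption. One should also make sure that identifying the optimal policy $\pi^i_*$ with the HJB maximizer is legitimate. This follows from a verification argument in the assumed classical class, or from the fixed-point statement of Theorem~\ref{thm policy improvement}.
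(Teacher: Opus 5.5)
Your proof is correct and follows the paper's argument: you factor $e^{(\partial_t\tilde V^i-\beta^i\tilde V^i)/\gamma^i}$ out of the integral and use the log-partition form \eqref{eq explore HJB 2} to see that the product equals $1$. Your derivation of $\pi^i_*=e^{q^i_*/\gamma^i}$ from the Gibbs form \eqref{eq optimal policy} spells out a step the paper leaves implicit.
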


\begin{proof}
From the definition \eqref{eq q*} and the exploratory HJB equation \eqref{eq explore HJB 2}:
\begin{align*}
\int_U\exp\bigg\{\frac{1}{\gamma^i}q^i_*\bigg\}du^i &= \int_U\exp\bigg\{\frac{1}{\gamma^i}\big[\partial_t\tilde{V}^i - \beta^i\tilde{V}^i + H^i\big]\bigg\}du^i\\
&= \exp\bigg\{\frac{1}{\gamma^i}[\partial_t\tilde{V}^i - \beta^i\tilde{V}^i]\bigg\}\int_U\exp\bigg\{\frac{1}{\gamma^i}H^i(t,x,u^i,u^{-i},\ldots)\bigg\}du^i.
\end{align*}
By \eqref{eq explore HJB 2}, $\partial_t\tilde{V}^i - \beta^i\tilde{V}^i = -\gamma^i\log\int_U\exp\{\frac{1}{\gamma^i}H^i\}du^i$, so the right-hand side equals $1$.
\end{proof}

\begin{theorem}[Weak Martingale Characterization of Optimal $q^i$]\label{thm martingale represent 3}
Let $\hat J^i_*\in C^{1,2}$ have polynomial growth and let $\hat q^i_*$ be continuous.  Assume
\[
\hat J^i_*(T,x,u^{-i})=g^i(x),\qquad
\int_U\exp\{\hat q^i_*(t,x,u^i,u^{-i})/\gamma^i\}\,du^i=1.
\]
\begin{enumerate}
\item[(1)] If $(\hat J^i_*,\hat q^i_*)=(\tilde V^i,q^i_*)$, then for every behavior policy $\pi\in\Pi$, every $(t,x,u^{-i})$, and every grid $\mathcal G_{t:T}$,
\begin{align}\label{eq optimal martingale representation}
&e^{-\beta^i s}\hat J^i_*(s,X_s^{t,x,\mathcal G,\pi,u^{-i}},u^{-i})\\
&\quad+\int_t^s e^{-\beta^i r}\big[f^i(r,X_r^{t,x,\mathcal G,\pi,u^{-i}},a_r^{\mathcal G,\pi})
-\hat q^i_*(r,X_r^{t,x,\mathcal G,\pi,u^{-i}},a_r^{\mathcal G,\pi},u^{-i})\big]dr
\end{align}
is a martingale.
\item[(2)] Conversely, if there exists one full-support behavior policy $\pi\in\Pi$ for which~\eqref{eq optimal martingale representation} is a martingale for every $(t,x,u^{-i})$ and every grid, then $(\hat J^i_*,\hat q^i_*)=(\tilde V^i,q^i_*)$.
\end{enumerate}
\end{theorem}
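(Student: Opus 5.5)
Part (1) follows the first half of the proof of Theorem~\ref{thm martingale represent 1}, with the optimal pair in place of the policy-evaluation pair. On each grid cell $[s_k,s_{k+1})$ the sampled action $a^{\mathcal G,\pi}$ is constant and $\mathscr F_{s_k}$-measurable. I therefore apply It\^o's formula to $e^{-\beta^i r}\tilde V^i(r,X_r;u^{-i})$, with $X_r=X_r^{t,x,\mathcal G,\pi,u^{-i}}$. The drift is
\[
e^{-\beta^i r}\big[\partial_t\tilde V^i+H^i(r,X_r,a_r^{\mathcal G,\pi},u^{-i},\partial_x\tilde V^i,\partial_x^2\tilde V^i)-f^i(r,X_r,a_r^{\mathcal G,\pi})-\beta^i\tilde V^i\big].
\]
Since $H^i$ contains $f^i$, adding the integrand $f^i-q^i_*$ from \eqref{eq optimal martingale representation} gives the drift $e^{-\beta^i r}[q^i_*-q^i_*]=0$ by the definition \eqref{eq q*}. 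This holds for any behavior policy $\pi$: $q^i_*$ is defined pointwise in the action, so neither the Gibbs optimality nor the normalization of Proposition~\ref{prop:q_star_normalization} is used here. What remains is a stochastic integral $\int e^{-\beta^i r}\partial_x\tilde V^i\,\sigma\,dW_r$. I localize it with $\tau_R=\inf\{r:|X_r|\ge R\}$. The localization is removed using the polynomial growth of $\tilde V^i$ and its derivatives, the linear growth in Assumption~\ref{assumption coe}(4), and the moment bounds on sampled actions from Definition~\ref{def:admissible_policy}(2). Together these give square-integrability of the integrand, hence a true martingale.

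For part (2), I would not redo a direct argument. Instead I reduce to Corollary~\ref{coro martingale representation 2} in two steps. First define
\[
q^i_{\hat J}(t,x,u):=\partial_t\hat J^i_*+H^i(t,x,u,\partial_x\hat J^i_*,\partial_x^2\hat J^i_*)-\beta^i\hat J^i_*.
\]
Applying It\^o's formula to $e^{-\beta^i s}\hat J^i_*(s,X_s,u^{-i})$ and subtracting the assumed martingale shows that $\int_t^s e^{-\beta^i r}(q^i_{\hat J}-\hat q^i_*)(r,X_r,a_r^{\mathcal G,\pi},u^{-i})\,dr$ is a continuous finite-variation local martingale, hence identically zero. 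Second, the positivity argument of Theorem~\ref{thm martingale represent 1} applies verbatim: choose a grid whose first cell starts at $t_0$, use full support of $\pi$ to sample near $a_0$, and use continuity of paths to keep $X$ near $x_0$. This gives $\hat q^i_*=q^i_{\hat J}$ pointwise.

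Next I combine this identity with the normalization $\int_U e^{\hat q^i_*/\gamma^i}du^i=1$. Because $\partial_t\hat J^i_*-\beta^i\hat J^i_*$ does not depend on $u^i$, taking logarithms gives
\[
\partial_t\hat J^i_*-\beta^i\hat J^i_*+\gamma^i\log\int_U\exp\{H^i(t,x,u^i,u^{-i},\partial_x\hat J^i_*,\partial_x^2\hat J^i_*)/\gamma^i\}du^i=0,
\]
with terminal value $g^i$. This is exactly the frozen exploratory HJB \eqref{eq explore HJB 2}. Uniqueness of classical solutions with polynomial growth then yields $\hat J^i_*=\tilde V^i(\cdot;u^{-i})$. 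Substituting back, $\hat q^i_*=q^i_{\hat J}=q^i_*$ by \eqref{eq q*}.

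The main obstacle is this uniqueness step, because the paper only assumes classical regularity of \eqref{eq HJB-explore}, not uniqueness in a growth class. I would handle it with a verification argument rather than cite a comparison principle. Let $\pi^{\hat J}\propto e^{\hat q^i_*/\gamma^i}$, which is a density by the normalization. Running the It\^o computation of Corollary~\ref{cor:value_diff} against an arbitrary admissible $\pi^i$ gives $\hat J^i_*\ge\tilde J^i(\cdot;\pi^i,u^{-i})$, since the drift defect is $\gamma^i D_{\mathrm{KL}}(\pi^i\|\pi^{\hat J})\ge0$. Running it against $\pi^{\hat J}$ itself gives equality, provided $\pi^{\hat J}$ is admissible. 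That admissibility, together with the integrability needed to remove the localization, is the delicate point; I would assume it explicitly, as Corollary~\ref{cor:value_diff} does.
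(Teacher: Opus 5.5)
Your proposal is correct and matches the paper's proof in structure. Part (1) is the same It\^o computation, with the drift cancelled by \eqref{eq q*}. Part (2) runs through the same chain: the finite-variation local martingale vanishes, the positivity argument uses full support, and the Gibbs normalization turns $\hat q^i_*=q^i_{\hat J}$ into \eqref{eq explore HJB 2}.

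Strictly, you do not ``reduce to'' Corollary~\ref{coro martingale representation 2}. Its normalization hypothesis is the policy-evaluation one for the behavior policy, $\int_U[\hat q^i-\gamma^i\log\pi]\pi\,du^i=0$, and this generally fails under the Gibbs normalization. What you actually do is reuse the first half of its proof, exactly as the paper does.

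The one genuine divergence is the final identification $\hat J^i_*=\tilde V^i$. The paper simply invokes uniqueness of classical solutions of the frozen HJB. That tacitly presupposes two things: that $\tilde V^i$ is itself a classical solution, and that a uniqueness theorem holds in the polynomial-growth class. Your verification argument instead identifies $\hat J^i_*$ directly with the supremum over $\Pi$, and thereby also shows $\tilde V^i\in C^{1,2}$. The cost is the extra hypothesis that the Gibbs policy $e^{\hat q^i_*/\gamma^i}$ is admissible, which you correctly flag.

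For the inequality $\hat J^i_*\ge\tilde J^i(\cdot;\pi^i,u^{-i})$, localization can be removed using only the stated polynomial growth of $\hat J^i_*$, since the KL term is nonnegative. In part (1), however, your appeal to polynomial growth of the derivatives of $\tilde V^i$ goes beyond the stated hypotheses. The paper's ``usual integrability argument'' tacitly needs the same kind of bound on $q^i_*$.
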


\begin{proof}
For (1), apply It\^o's formula on each grid interval to $e^{-\beta^i r}\tilde V^i(r,X_r^{\mathcal G,\pi};u^{-i})$.  The finite-variation term in~\eqref{eq optimal martingale representation} is
\[
e^{-\beta^i r}\big[\partial_t\tilde V^i+H^i(r,X_r,a_r^{\mathcal G,\pi},u^{-i},\partial_x\tilde V^i,\partial_x^2\tilde V^i)-\beta^i\tilde V^i-q^i_*(r,X_r,a_r^{\mathcal G,\pi},u^{-i})\big]dr,
\]
which vanishes by~\eqref{eq q*}.  After localization and the usual integrability argument, only the stochastic integral remains, proving the martingale property.

For (2), define from the candidate value
\[
q_{\hat J_*}^i:=\partial_t\hat J_*^i
+H^i(t,x,u,\partial_x\hat J_*^i,\partial_x^2\hat J_*^i)-\beta^i\hat J_*^i.
\]
The same It\^o decomposition shows that, for every grid,
\[
\int_t^s e^{-\beta^i r}\big[q_{\hat J_*}^i(r,X_r^{\mathcal G,\pi},a_r^{\mathcal G,\pi},u^{-i})-
\hat q_*^i(r,X_r^{\mathcal G,\pi},a_r^{\mathcal G,\pi},u^{-i})\big]dr=0
\]
a.s.  If $q_{\hat J_*}^i-\hat q_*^i$ had a strict sign at some point, continuity, full support of the first sampled action, and continuity of the state on a sufficiently short first grid cell would produce a positive-probability event on which the integral has that same strict sign.  Hence
\[
\hat q_*^i=q_{\hat J_*}^i
\quad\text{for all }(t,x,u).
\]
Use this identity in the normalization condition:
\begin{align*}
1&=\int_U\exp\Big\{\frac{1}{\gamma^i}\big[\partial_t\hat J_*^i-\beta^i\hat J_*^i+H^i(t,x,u^i,u^{-i},\partial_x\hat J_*^i,\partial_x^2\hat J_*^i)\big]\Big\}du^i\\
&=\exp\{(\partial_t\hat J_*^i-\beta^i\hat J_*^i)/\gamma^i\}
\int_U\exp\{H^i(t,x,u^i,u^{-i},\partial_x\hat J_*^i,\partial_x^2\hat J_*^i)/\gamma^i\}du^i.
\end{align*}
Taking logarithms yields the frozen exploratory HJB~\eqref{eq explore HJB 2} with $\hat J_*^i$ in place of $\tilde V^i$.  The terminal conditions agree, so uniqueness of the classical HJB solution gives $\hat J_*^i=\tilde V^i$.  The pointwise identity $\hat q_*^i=q_{\hat J_*}^i$ then gives $\hat q_*^i=q_*^i$ by~\eqref{eq q*}.
\end{proof}

\section{\texorpdfstring{$q$}{q}-Learning Algorithms}
\label{sec:algorithm}

The martingale results above lead to TD estimators for the frozen-response problem.  The sampling protocol must match that problem: $u^{-i}$ is held fixed along a trajectory, while player $i$'s action is sampled on a time grid.

\subsection{Temporal-Difference Condition}

For a grid interval $[s_k,s_{k+1}]$, Theorem~\ref{thm martingale represent 1} gives the increment
\begin{align}\label{eq:TD_residual}
\delta_k^i:={}&e^{-\beta^i s_{k+1}}\hat J^i(s_{k+1},X_{s_{k+1}},u^{-i})
-e^{-\beta^i s_k}\hat J^i(s_k,X_{s_k},u^{-i})\\
&+\int_{s_k}^{s_{k+1}}e^{-\beta^i r}\big[f^i(r,X_r,a_r^{\mathcal G,\pi})
-\hat q^i(r,X_r,a_r^{\mathcal G,\pi},u^{-i})\big]dr.
\end{align}
At the population solution, $\mathbb E[\delta_k^i\mid\mathscr F_{s_k}]=0$.  A parametric approximation $(\hat J^{i,\phi},\hat q^{i,\theta})$ can therefore be fitted by stochastic approximation to the martingale moment, or by a squared TD loss together with the normalization constraint of Proposition~\ref{prop:q_properties} or Proposition~\ref{prop:q_star_normalization}.

\subsection{Off-Policy Learning of Frozen Response Maps}\label{subsec:offpolicy}

To learn the dependence on $u^{-i}$, choose a design distribution $\nu_i$ on $U^{N-1}$.  At the beginning of each trajectory, draw $\bar u^{-i}\sim\nu_i$ and keep it fixed on the whole trajectory.  Player $i$ is sampled from a full-support behavior policy $\pi_b$ at the grid points.  Across trajectories, the regression variables $(t,x,u^i,\bar u^{-i})$ cover the desired region of the frozen response map.

For direct learning of the optimal pair $(\tilde V^i,q_*^i)$, use the residual
\begin{align}\label{eq:TD_offpolicy}
\delta_{k}^{i,\mathrm{off}}={}&e^{-\beta^i s_{k+1}}\hat J_*^i(s_{k+1},X_{s_{k+1}},\bar u^{-i})
-e^{-\beta^i s_k}\hat J_*^i(s_k,X_{s_k},\bar u^{-i})\\
&+\int_{s_k}^{s_{k+1}}e^{-\beta^i r}\big[f^i(r,X_r,a_r^{\mathcal G,\pi_b})
-\hat q_*^i(r,X_r,a_r^{\mathcal G,\pi_b},\bar u^{-i})\big]dr.
\end{align}
Theorem~\ref{thm martingale represent 3} identifies the optimal pair when these martingale moments hold for every initial condition and grid, together with the Gibbs normalization.

\begin{algorithm}[H]
\caption{Off-Policy Learning of Player $i$'s Frozen Response Map}
\label{alg:offpolicy_q}
\begin{algorithmic}[1]
\REQUIRE Design law $\nu_i$ for $u^{-i}$; full-support behavior policy $\pi_b$; time grid; learning rates
\STATE Initialize parameters $\theta^i,\phi^i$
\FOR{episode $e=1,2,\ldots$}
  \STATE Draw $\bar u^{-i}\sim\nu_i$ and keep $\bar u^{-i}$ fixed during the episode
  \FOR{grid cells $[s_k,s_{k+1}]$}
    \STATE Draw $a^i_{s_k}\sim\pi_b(\cdot;s_k,X_{s_k})$ and hold it on $[s_k,s_{k+1})$
    \STATE Evolve the state with action profile $(a^i_{s_k},\bar u^{-i})$
    \STATE Form $\delta_k^{i,\mathrm{off}}$ from~\eqref{eq:TD_offpolicy} and update $(\theta^i,\phi^i)$
  \ENDFOR
\ENDFOR
\RETURN $\hat q_*^{i,\theta^i}$ and $\hat J_*^{i,\phi^i}$
\end{algorithmic}
\end{algorithm}

Once a frozen $q^i(\cdot,u^{-i})$ has been identified, its Gibbs improvement is given by~\eqref{eq:improved_policy_q}.  Repeating the experiment over the design values of $u^{-i}$ estimates the family of frozen conditional response maps.  The algorithm identifies the conditional response maps of the model; it is not asserted to learn the standard Nash equilibrium.

\subsection{Enforcing the Normalization Constraint}\label{sec:normalization}

The optimal characterization requires
\[
\int_U\exp\{\hat q_*^i(t,x,u^i,u^{-i})/\gamma^i\}\,du^i=1.
\]
This condition is part of the HJB identification in Theorem~\ref{thm martingale represent 3}; a martingale loss alone does not determine the required normalization.  A direct parameterization is
\[
\hat q_*^{i,\theta}=\bar q_*^{i,\theta}
-\gamma^i\log\int_U\exp\{\bar q_*^{i,\theta}/\gamma^i\}\,du^i,
\]
which enforces the constraint identically whenever the integral is finite.  A penalty on the normalization residual can be used when this integral is evaluated numerically.

\section{Numerical Experiments}\label{sec:numerical}

The numerical example is a stationary discounted LQ benchmark.  Two things are done with it.  The compatibility criterion is first evaluated in closed form, which shows that in this asymmetric game a time-local conditional equilibrium exists only for one ratio of the exploration weights, and the joint law at that ratio is exhibited.  A regression experiment then estimates the action-dependent coefficients of the one-step TD score generated by that assessment, to check that the conditional response maps are identifiable from joint exploratory data.

\subsection{Two-Player Stationary Discounted LQ Benchmark}

Throughout this section we write $\rho>0$ for the common discount rate of both players (i.e.\ $\rho=\beta^1=\beta^2$ in the notation of Section~\ref{sec:ergodic}); this is independent of the exploration weights $\gamma^i>0$.

The one-dimensional state $X_t \in \mathbb{R}$ evolves as
\begin{align*}
dX_t = (A X_t + B^1 a_{1,t} + B^2 a_{2,t})\,dt + \sigma\,dW_t,
\end{align*}
where $A < 0$ ensures mean-reversion and $B^i$ measures player $i$'s influence on the state. Each player maximizes the infinite-horizon discounted entropy-regularized reward
\begin{align*}
V^i(\pi^1,\pi^2) = \mathbb{E}\bigg[\int_0^\infty e^{-\rho t}\big(-Q^i X_t^2 - R^i a_{i,t}^2 - \gamma^i\log\pi^i(a_{i,t}\mid X_t)\big)\,dt\bigg],
\end{align*}
i.e.\ a stationary discounted game with running reward $f^i(x,u^i)=-(Q^ix^2+R^i(u^i)^2)$ and entropy bonus $-\gamma^i\log\pi^i$.

The continuation assessment of Section~\ref{subsec:assessment} is available in closed form.  Holding the opponents at $a^{-i}$ over the remaining horizon gives
\begin{align}\label{eq:frozen_lq}
\tilde V^i(x;a^{-i})=-P^ix^2+\ell^i(a^{-i})\,x+m^i(a^{-i}),
\end{align}
in which $P^i$ is the positive root of the scalar Riccati equation
\begin{align}\label{eq:scalar_riccati}
\frac{(B^i)^2}{R^i}P^2+(\rho-2A)P-Q^i=0,
\end{align}
and
\begin{align}\label{eq:ell}
\ell^i(a^{-i})=\frac{-2P^i\sum_{j\ne i}B^ja^j}{E^i},\qquad
E^i:=\rho-A+\frac{(B^i)^2P^i}{R^i}.
\end{align}
The quadratic coefficient does not depend on $a^{-i}$, since a frozen opponent enters the dynamics only as a constant drift.  For the parameters used below,
\[
P^1=0.3022,\qquad P^2=0.8000 .
\]
These are not the coefficients of the coupled algebraic Riccati system, which gives $P^1=0.2746$ and $P^2=0.4426$; the two objects differ, and only the former enters the assessment of the present model.

The linear term in~\eqref{eq:frozen_lq} carries the content.  Because $\ell^i$ depends on $a^{-i}$, the spatial derivative $\partial_x\tilde V^i=-2P^ix+\ell^i(a^{-i})$ varies with the opponents' actions, so $\partial_{a^j}\partial_x\tilde V^i$ does not vanish and the indirect term in the cross-partial decomposition is nonzero.

\subsection{Compatibility and the Joint Law}

Substituting \eqref{eq:frozen_lq}--\eqref{eq:ell} into the decomposition of Proposition~\ref{prop:decomposition}, the direct term vanishes because the drift is linear in the actions and the diffusion does not depend on them, leaving
\begin{align}\label{eq:mixed_lq}
\frac{\partial^2 H_v^i}{\partial a^j\partial a^i}
=B^i\frac{\partial\ell^i}{\partial a^j}
=-\frac{2P^iB^iB^j}{E^i},\qquad j\ne i .
\end{align}
The compatibility gap of Definition~\ref{def:compat_gap} is therefore
\begin{align}\label{eq:gap_lq}
\Delta^{12}=-2B^1B^2\left(\frac{\varphi^1}{\gamma^1}-\frac{\varphi^2}{\gamma^2}\right),
\qquad \varphi^i:=\frac{P^i}{E^i},
\end{align}
and does not depend on $(t,x)$.  With the parameters above, $\varphi^1=0.1309$ and $\varphi^2=0.5333$, so $\Delta^{12}=0$ exactly when
\begin{align}\label{eq:rstar}
\frac{\gamma^2}{\gamma^1}=\frac{\varphi^2}{\varphi^1}=4.0745 .
\end{align}
At equal exploration weights and $\gamma^1=1$ the gap equals $+0.80$; it decays like $O(\gamma^{-1})$, as in Proposition~\ref{prop:large_gamma_approx}, and decreases monotonically in $\gamma^2/\gamma^1$, changing sign at~\eqref{eq:rstar} (Figure~\ref{fig:compatibility}, left and Table~\ref{tab:gap_scan}).

\begin{table}[ht]
\centering
\caption{Compatibility gap for the LQ benchmark with $\gamma^1=1$.}
\label{tab:gap_scan}
\small
\begin{tabular}{lccccccc}
\toprule
$\gamma^2/\gamma^1$ & $1$ & $2$ & $4$ & $\mathbf{4.0745}$ & $5$ & $8$ & $10$ \\
\midrule
$\Delta^{12}$ & $0.8049$ & $0.2715$ & $0.0049$ & $\mathbf{0}$ & $-0.0485$ & $-0.1285$ & $-0.1551$ \\
\bottomrule
\end{tabular}
\end{table}

At the ratio~\eqref{eq:rstar} the joint law exists and is Gaussian.  Writing $\Phi=\log\psi$, the cross-partial identities give the cross coefficient $c=B^1\partial_{a^2}\ell^1/\gamma^1=-0.2618$, so that
\[
\Phi=-\frac{R^1}{\gamma^1}(a^1)^2-\frac{R^2}{\gamma^2}(a^2)^2+c\,a^1a^2+\text{(terms linear in the actions and depending on }x).
\]
Thus $\psi$ is the bivariate Gaussian with covariance $\Sigma=\tfrac12Q^{-1}$, where $Q=\begin{pmatrix}R^1/\gamma^1&-c/2\\-c/2&R^2/\gamma^2\end{pmatrix}$.  With $\gamma^1=1$ and $\gamma^2=4.0745$ one has $\det Q=0.1056>0$ and
\[
\Sigma=\begin{pmatrix}0.5811&-0.6199\\-0.6199&4.7357\end{pmatrix}.
\]
Its full conditionals reproduce the two Gibbs responses exactly: the conditional variances are $0.5000=\gamma^1/(2R^1)$ and $4.0745=\gamma^2/(2R^2)$ (Figure~\ref{fig:compatibility}, right).

The criterion is independent of $x$, so it holds on the whole state space or nowhere.  It is restrictive here because the players are asymmetric.  If $B^1=B^2$, $Q^1=Q^2$ and $R^1=R^2$, then $\varphi^1=\varphi^2$ and equal exploration weights are compatible; asymmetry between the players has to be matched by asymmetry in the exploration weights.

\begin{figure}[ht]
\centering
\includegraphics[width=\textwidth]{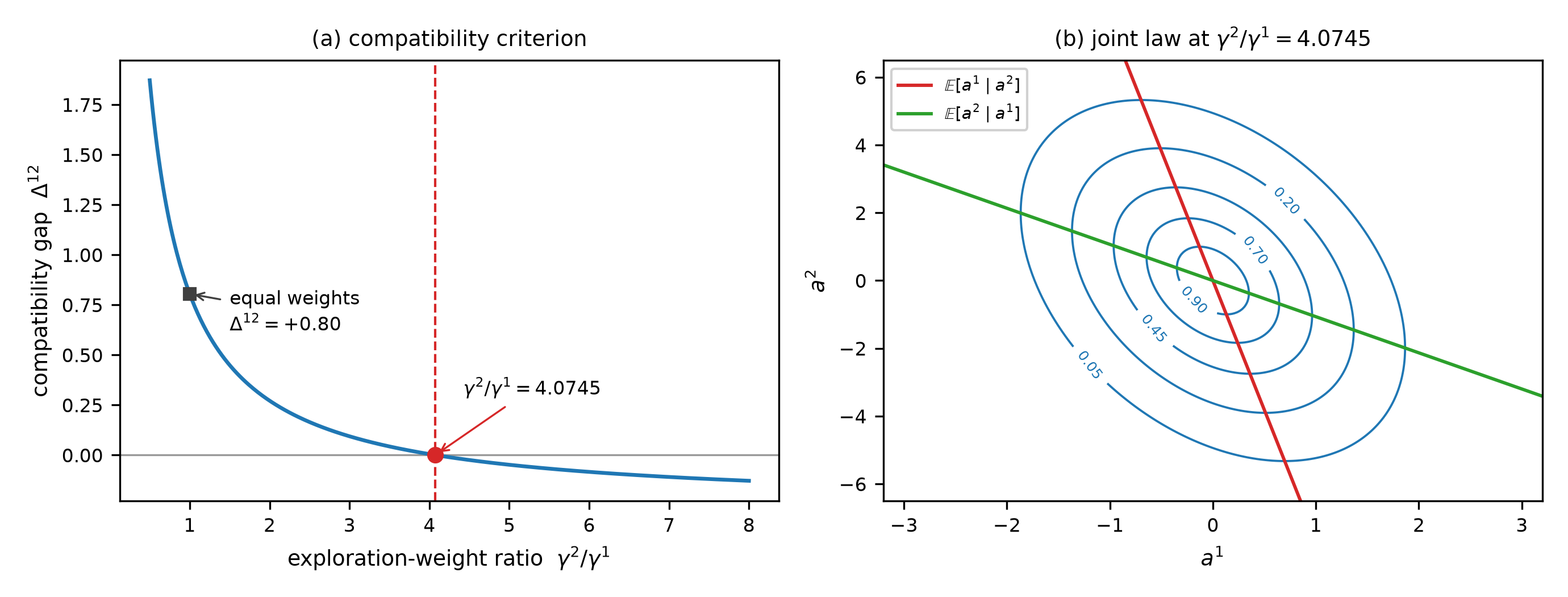}
\caption{Compatibility for the LQ benchmark.  \emph{Left:} the compatibility gap $\Delta^{12}$ of~\eqref{eq:gap_lq} as a function of the exploration-weight ratio $\gamma^2/\gamma^1$; it is positive at equal weights and crosses zero at~\eqref{eq:rstar}.  \emph{Right:} level sets of the joint law $\psi$ at that ratio, with the two conditional means; the conditional variances agree with $\gamma^i/(2R^i)$ to machine precision.}
\label{fig:compatibility}
\end{figure}

\subsection{One-Step Action-Score Parametrization}

The regression experiment fits the action-dependent part of the one-step score generated by the frozen assessment of Section~\ref{subsec:assessment}, and asks whether its coefficients can be recovered from joint exploratory data.  With $\tilde V^i$ as in~\eqref{eq:frozen_lq}--\eqref{eq:ell} and $\ell'^i:=\partial_{a^{-i}}\ell^i$, the action-dependent part of the Hamiltonian is
\begin{align}\label{eq:q_theoretical_num}
h_i^{\tilde V}(x,a^i,a^{-i})
={}&-R^i(a^i)^2-2P^iB^ia^ix-2P^iB^{-i}a^{-i}x\nonumber\\
&+\ell'^iB^ia^ia^{-i}+\ell'^iB^{-i}(a^{-i})^2+C^i_0(x),
\end{align}
where $C^i_0(x)$ is independent of the actions.  The two terms on the second line are absent whenever the assessment does not depend on the opponents' actions, as in the Nash benchmark, and the cross term $a^ia^{-i}$ is the one that carries the compatibility content of the model.

The experiment uses a time step $\Delta t$ and the one-step increment
\[
Y^i_\Delta=f^i(x,a^i)\Delta t+\delta_\rho\tilde V^i(X_{t+\Delta t};a^{-i})-\tilde V^i(x;a^{-i}),
\qquad \delta_\rho=e^{-\rho\Delta t}.
\]
Because $\tilde V^i$ depends on $a^{-i}$, the last term is not action-independent, which is the second respect in which the present assessment differs from one that ignores the opponents' actions.  Expanding one Euler step conditional on $(x,a^i,a^{-i})$ gives
\begin{align}\label{eq:q_conditional_param}
\mathbb E[Y^i_\Delta\mid x,a]
={}&-\frac{e^{-\psi^i_3}}{2}(a^i-\psi^i_1x-\psi^i_2)^2
+\eta^i_1a^{-i}x+\eta^i_2a^{-i}\nonumber\\
&+\eta^i_3a^ia^{-i}+\eta^i_4(a^{-i})^2+C^i_\Delta(x)+O(\Delta t^2),
\end{align}
up to an action-independent baseline and $O(\Delta t^2)$ terms.  Every coefficient below is fitted; two of the regressors, $a^ia^{-i}$ and $(a^{-i})^2$, appear only because the assessment depends on the opponents' actions, and dropping them would leave the fit misspecified exactly in the direction that determines compatibility.  The coefficients are
\begin{align}\label{eq:true_params}
\psi^i_1&=\delta_\rho K^i, \qquad \psi^i_2=0, \qquad e^{-\psi^i_3}=2R^i\Delta t,\nonumber\\
\eta^i_1&=(\delta_\rho-1)\ell'^i+\delta_\rho\Delta t\big(A\ell'^i-2P^iB^{-i}\big),\qquad \eta^i_2=0,\nonumber\\
\eta^i_3&=\delta_\rho\Delta t\,\ell'^iB^i, \qquad
\eta^i_4=(\delta_\rho-1)m^i_{(2)}+\delta_\rho\Delta t\,\ell'^iB^{-i},
\end{align}
with $K^i=-P^iB^i/R^i$ and $m^i_{(2)}$ the coefficient of $(a^{-i})^2$ in the constant term of~\eqref{eq:frozen_lq}.  The fitted quantities are therefore coefficients of the one-step action score, not of the normalised continuous-time $q^i_*$; dividing the action-dependent part by $\Delta t$ recovers the Hamiltonian rate to first order.  If the one-step score is used in a Gibbs update, the corresponding one-step entropy weight is $\gamma^i\Delta t$, so the limiting Gaussian variance is $\gamma^i/(2R^i)$ rather than $\gamma^ie^{\psi^i_3}$.

\subsection{Experimental Design}

We choose an asymmetric game to make the opponent-action coupling parameters numerically distinguishable between the two players:

\begin{center}
\begin{tabular}{lll}
\toprule
\textbf{Parameter} & \textbf{Value} & \textbf{Role} \\
\midrule
$A$          & $-1.0$  & Mean-reversion \\
$B^1, B^2$   & $2.0, 0.5$ & Player~1 has $4\times$ stronger control \\
$Q^1, Q^2$   & $1.0, 2.0$ & Asymmetric state penalties \\
$R^1, R^2$   & $1.0, 0.5$ & Asymmetric control costs \\
$\sigma$     & $0.3$   & Diffusion coefficient \\
$\rho$       & $0.1$   & Discount rate \\
$\Delta t$   & $0.02$  & Discretization step \\
\bottomrule
\end{tabular}
\end{center}

The $B^1 \gg B^2$ asymmetry makes $\eta^2_1$ (Player~2's sensitivity to Player~1's actions) much larger than $\eta^1_1$.  The scalar Riccati equation~\eqref{eq:scalar_riccati} gives
\[
P^1 = 0.3022, \quad P^2 = 0.8000, \qquad K^1 = -0.6044, \quad K^2 = -0.8000,
\]
and the coefficients~\eqref{eq:true_params} evaluated at these values are:

\begin{center}
\begin{tabular}{lcccc}
\toprule
\textbf{Player} & $\psi_1$ & $\eta_1$ & $\eta_3$ & $\eta_4$ \\
\midrule
P1 & $-0.6032$ & $-0.003158$ & $-0.005225$ & $-0.000738$ \\
P2 & $-0.7984$ & $-0.017028$ & $-0.021291$ & $+0.039868$ \\
\bottomrule
\end{tabular}
\end{center}

The two contributions to $\eta^i_1$ in~\eqref{eq:true_params} nearly cancel for Player~1, which is why that coefficient is an order of magnitude smaller than the cross coefficient $\eta^i_3$.

We generate joint exploratory data with independent behaviour actions $a^1,a^2\sim\mathcal N(0,1)$ and fit the action-dependent coefficients in~\eqref{eq:q_conditional_param}; an $x$-only baseline absorbs the remaining terms.  The assessment $\tilde V^i(\cdot;a^{-i})$ is held at its closed form~\eqref{eq:frozen_lq}.  The discount rate is $\rho$; the exploration weights $\gamma^i$ do not enter this regression, since the fitted object is the one-step score rather than its Gibbs normalisation.  We use $3\times10^6$ steps.  The $\psi$-block is trained with Adam and the $\eta$-block with annealed stochastic gradient descent, $\alpha_\eta$ decreasing from $4\times10^{-4}$ to $10^{-4}$ over the final third.  The split is forced by the gradient scale: $\partial q/\partial\psi^i_1\propto e^{-\psi^i_3}=2R^i\Delta t\approx0.04$ leaves the feedback coefficients converging about $25$ times more slowly than the $\eta$-coefficients under a common step size, which Adam removes, while the annealed step size keeps the small $\eta$-coefficients from wandering.

\subsection{Results}

\textbf{Parameter convergence.}
Figure~\ref{fig:convergence} shows the fitted action-score parameters over $3\times10^6$ off-policy TD steps.  The feedback coefficients $\psi^i_1$, the curvature parameters $\psi^i_3$, and the two coefficients $\eta^i_3$ and $\eta^i_4$ that exist only under the frozen assessment all settle at their closed-form values, while the intercepts $\psi^i_2$ and the direct couplings $\eta^i_2$ stay near zero.  Figure~\ref{fig:eta1_convergence} enlarges the two opponent coefficients, and Table~\ref{tab:eta_results} records the final estimates.

\begin{figure}[ht]
\centering
\includegraphics[width=0.8\textwidth]{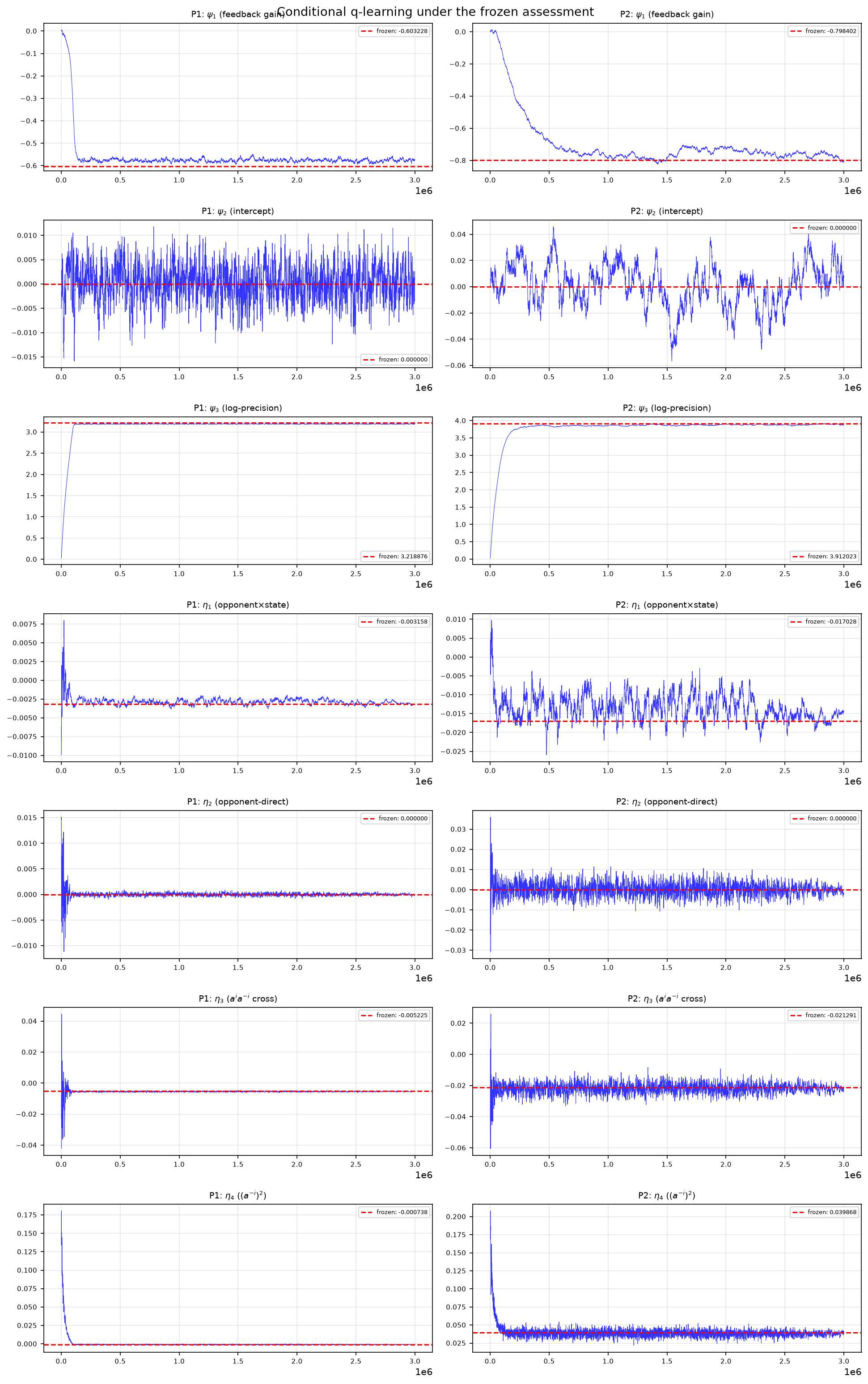}
\caption{Convergence of the fitted action-score parameters under the frozen assessment over $3\times10^6$ off-policy TD steps.  Red dashed: closed-form values from~\eqref{eq:true_params}.}
\label{fig:convergence}
\end{figure}

\begin{figure}[ht]
\centering
\includegraphics[width=\textwidth]{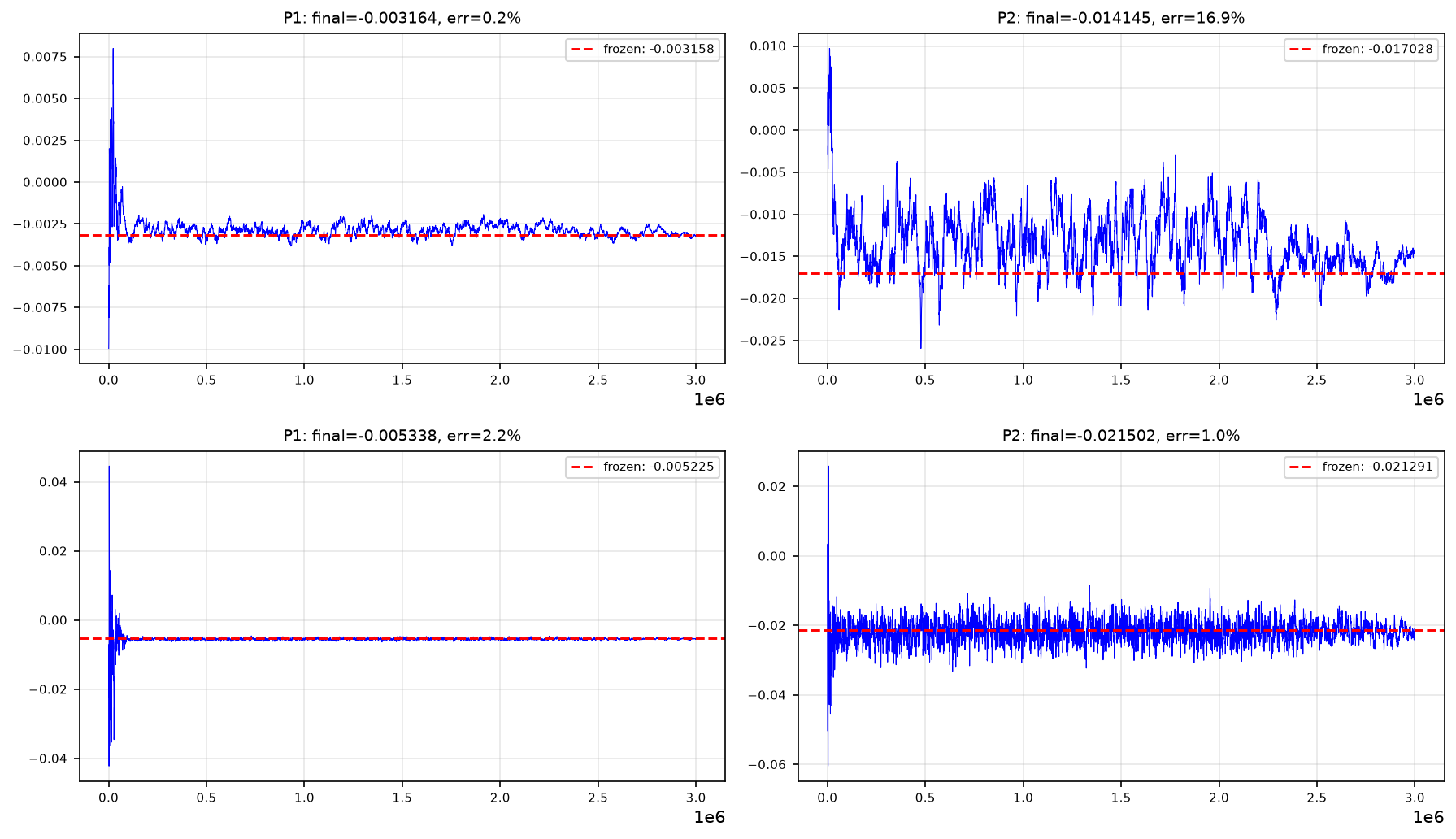}
\caption{Enlarged view of $\eta_1$ (top) and of the cross coefficient $\eta_3$ (bottom).  The cross coefficient enters only because the assessment depends on the opponents' actions, and it is recovered to within $2.2\%$ for Player~1 and $1.0\%$ for Player~2.}
\label{fig:eta1_convergence}
\end{figure}

\begin{table}[ht]
\centering
\caption{One-step action-score fit: final parameter estimates ($3\times10^6$ steps).}
\label{tab:eta_results}
\begin{tabular}{lccc}
\toprule
\textbf{Parameter} & \textbf{Closed form} & \textbf{Learned} & \textbf{Rel.\ error} \\
\midrule
\multicolumn{4}{c}{\textbf{Player 1}} \\
\midrule
$\psi^1_1$ (feedback)              & $-0.603228$ & $-0.570967$ & $5.3\%$ \\
$\eta^1_1$ (opponent$\times$state) & $-0.003158$ & $-0.003164$ & $0.2\%$ \\
$\eta^1_3$ (cross)                 & $-0.005225$ & $-0.005338$ & $2.2\%$ \\
$\eta^1_4$ (opponent squared)      & $-0.000738$ & $-0.000630$ & $14.7\%$ \\
\addlinespace
\multicolumn{4}{c}{\textbf{Player 2}} \\
\midrule
$\psi^2_1$ (feedback)              & $-0.798402$ & $-0.805611$ & $0.9\%$ \\
$\eta^2_1$ (opponent$\times$state) & $-0.017028$ & $-0.014145$ & $16.9\%$ \\
$\eta^2_3$ (cross)                 & $-0.021291$ & $-0.021502$ & $1.0\%$ \\
$\eta^2_4$ (opponent squared)      & $+0.039868$ & $+0.037661$ & $5.5\%$ \\
\bottomrule
\end{tabular}
\end{table}

Every coefficient is recovered.  The cross coefficient $\eta^i_3$ is identified to within $2.2\%$ and $1.0\%$, and the feedback coefficients to within $5.3\%$ and $0.9\%$.  The largest relative error, $14.7\%$, falls on $\eta^1_4$, whose closed-form value is a near-cancellation and therefore small in absolute terms.

\textbf{Stationary behaviour.}
The learned feedback gains $K^i_L=\psi^i_1/\delta_\rho$ are $-0.578$ and $-0.769$, against the frozen response gains $K^i=-P^iB^i/R^i=-0.604$ and $-0.800$.  Simulating the closed loop under each for $5\times10^4$ steps after a $5\times10^3$-step burn-in gives the trajectories and stationary densities of Figure~\ref{fig:ergodic}; the empirical standard deviations are $0.1366$ and $0.1327$, against the closed-form value $0.1313$ for the frozen gain.

\begin{figure}[ht]
\centering
\includegraphics[width=\textwidth]{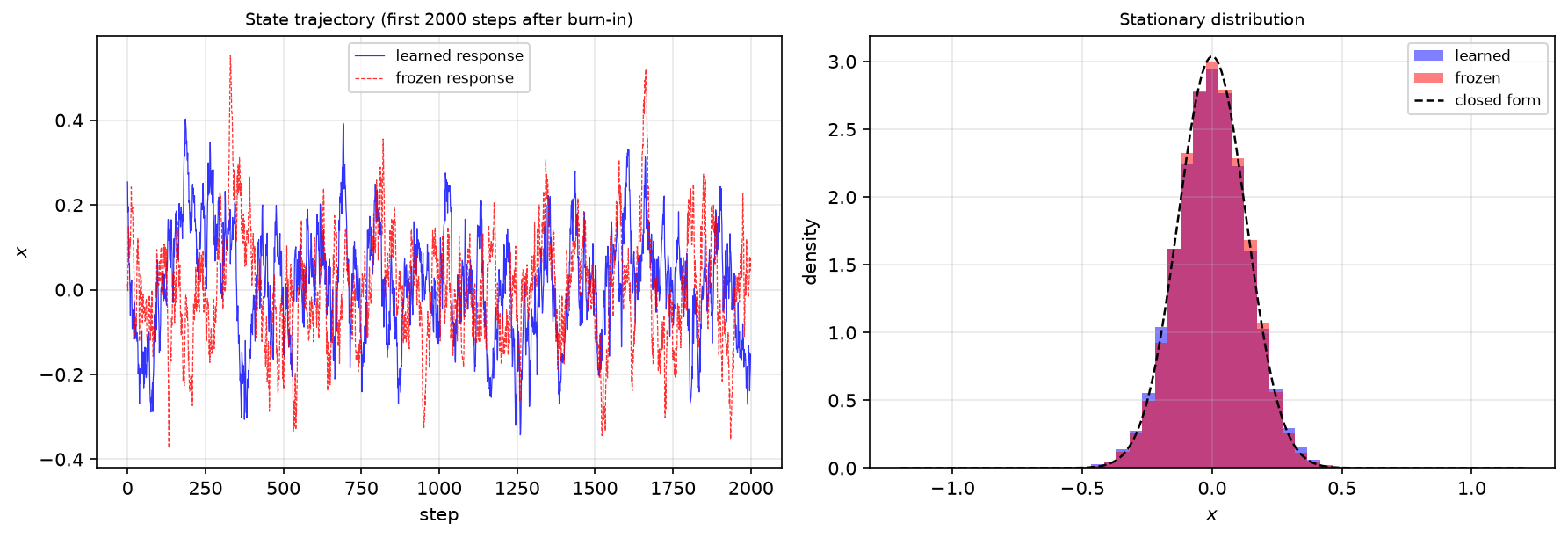}
\caption{Closed-loop behaviour under the learned feedback (blue) and under the frozen response (red).  \emph{Left:} state trajectories.  \emph{Right:} empirical stationary densities, with the closed-form density of the frozen response as the dashed curve.}
\label{fig:ergodic}
\end{figure}

\subsection{Discussion}

The regression identifies every coefficient of the one-step score, including the two regressors that exist only under the frozen assessment.  Its role is identification; compatibility is not tested by this experiment.  The criterion is evaluated in closed form above, where it is a single scalar condition that equal exploration weights fail to meet for these parameters.

\section{Concluding Remarks}\label{sec:concluding}

The model studied here is a time-local conditional equilibrium.  Each player responds to the actions currently realized by the others, and because no player can forecast the opponents' future actions, the future is assessed with the opponents held at their realized values.  For a fixed assessment, the current response is a Gibbs density conditional on the realized opponent profile, and an equilibrium is a joint law whose full conditionals agree with these responses.  Existence of such a joint law is equivalent to compatibility of the full conditionals.  The standard exploratory Nash problem is a fixed point in complete continuation policies and is a distinct object.

For smooth positive Gibbs responses, compatibility is characterized by the cross-partial condition in Theorem~\ref{thm:compat_q}; on simply connected action domains it is equivalent to an entropy-weighted potential structure and gives an explicit Gibbs joint density.  In stationary discounted settings this amounts to statewise consistency of current conditional behavior rather than consistency of complete continuation rules, while the evolution of the state remains in the continuation value.  Permutation symmetry alone is not sufficient.

The short-sightedness is the defining feature of the model and also its main limitation.  Two extensions would remove it.  The first is to make the assessment self-consistent: rather than holding the opponents at their realized actions, require the continuation value to be the value generated by the dynamics induced by the equilibrium joint law.  The construction then becomes a two-level fixed point, and existence has to be established rather than assumed.  The second is to return to complete continuation policies and solve the standard Nash problem directly, which requires a coupled system of fully nonlinear HJB equations together with a fixed point in policy space.  Conditioning on complete continuation controls rather than a single time slice leads to a path- or strategy-space compatibility problem, in which nonanticipative conditional laws replace the finite-dimensional Poincar\'e argument used here.

\appendix

\section{Proof of the Dynamic Programming Principle}\label{app:DPP}

Fix $\pi^{-i}\in\Pi^{N-1}$.  For this fixed opponent profile the exploratory best-response problem is an ordinary Markov stochastic control problem.  The averaged state equation is well posed by the estimates preceding Proposition~\ref{prop:DPP}.  The running reward
\[
\tilde f^i(s,X_s,\pi_s^i)-\gamma^i\mathcal E(s,X_s,\pi_s^i)
\]
is measurable, additive over time intervals, and integrable under Definition~\ref{def:admissible_policy}.  Under the stability hypothesis in Proposition~\ref{prop:DPP}, the admissible controls satisfy the concatenation requirement of the weak dynamic programming principle.  Thus Theorem~3.1 of \cite{BouchardTouzi11} applies after augmenting the state by the accumulated discounted running reward.

Let $(\tilde V^i)^*$ be the upper semicontinuous envelope of $\tilde V^i(\cdot,\cdot;\pi^{-i})$.  The upper weak-DPP inequality gives
\begin{align*}
\tilde V^i(t,x;\pi^{-i})
\le \sup_{\pi^i\in\Pi}\mathbb E\bigg[&\int_t^\tau e^{-\beta^i(r-t)}
\big(\tilde f^i(r,X_r,\pi_r^i)-\gamma^i\mathcal E(r,X_r,\pi_r^i)\big)dr\\
&+e^{-\beta^i(\tau-t)}(\tilde V^i)^*(\tau,X_\tau;\pi^{-i})\bigg].
\end{align*}
The lower weak-DPP inequality of \cite[Thm.~3.1]{BouchardTouzi11} holds with any upper-semicontinuous function $\varphi\le\tilde V^i$ in place of the continuation value.  Under the continuity hypothesis of Proposition~\ref{prop:DPP}, $(\tilde V^i)^*=\tilde V^i$ and we may take $\varphi=\tilde V^i$.  The two inequalities then give~\eqref{eq:DPP_stopping}.  Taking $\tau\equiv s$ gives~\eqref{eq:DPP_deterministic}.

This use of the weak DPP avoids assuming the existence of an $\mathscr F_\tau$-measurable selector of pointwise $\varepsilon$-optimal Markov policies.  Such a selector was not part of Definition~\ref{def:admissible_policy} and is not needed for the HJB derivation.

\bibliographystyle{plain}
\bibliography{references}

@book{Krylov87,
  author    = {N.V. Krylov},
  title     = {Nonlinear Elliptic and Parabolic Equations of the Second Order},
  publisher = {D.\ Reidel Publishing Company},
  address   = {Dordrecht},
  year      = {1987}
}

@article{BouchardTouzi11,
  author  = {B. Bouchard and N. Touzi},
  title   = {Weak dynamic programming principle for viscosity solutions},
  journal = {SIAM Journal on Control and Optimization},
  volume  = {49},
  number  = {3},
  pages   = {948--962},
  year    = {2011}
}

@book{FlemSoner,
  author    = {W.H. Fleming and H.M. Soner},
  title     = {Controlled {M}arkov Processes and Viscosity Solutions},
  edition   = {2nd},
  publisher = {Springer},
  address   = {New York},
  year      = {2006}
}

@book{GT01,
  author    = {D. Gilbarg and N.S. Trudinger},
  title     = {Elliptic Partial Differential Equations of Second Order},
  publisher = {Springer},
  address   = {Berlin},
  year      = {2001}
}

@article{JZ22a,
  author  = {Jia, Yanwei and Zhou, Xun Yu},
  title   = {Policy Evaluation and Temporal-Difference Learning in Continuous Time and Space: A Martingale Approach},
  journal = {Journal of Machine Learning Research},
  volume  = {23},
  number  = {154},
  pages   = {1--55},
  year    = {2022}
}

@article{JZ22b,
  author  = {Jia, Yanwei and Zhou, Xun Yu},
  title   = {Policy Gradient and Actor-Critic Learning in Continuous Time and Space: Theory and Algorithms},
  journal = {Journal of Machine Learning Research},
  volume  = {23},
  number  = {275},
  pages   = {1--50},
  year    = {2022}
}

@article{JZ23,
  author  = {Y. Jia and X.Y. Zhou},
  title   = {{$q$}-{L}earning in continuous time},
  journal = {Journal of Machine Learning Research},
  volume  = {24},
  number  = {161},
  pages   = {1--61},
  year    = {2023}
}

@article{WZZ20,
  author  = {H. Wang and T. Zariphopoulou and X.Y. Zhou},
  title   = {Reinforcement learning in continuous time and space: {A} stochastic control approach},
  journal = {Journal of Machine Learning Research},
  volume  = {21},
  number  = {198},
  pages   = {1--34},
  year    = {2020}
}

@article{TangZhou24,
  author = {Tang, Wenpin and Zhou, Xun Yu},
  title = {Regret of Exploratory Policy Improvement and q-Learning},
  journal = {arXiv:2411.01302},
  year = {2024}
}

@article{HuangJZ24,
  author = {Huang, Yilie and Jia, Yanwei and Zhou, Xun Yu},
  title = {Sublinear Regret for a Class of Continuous-Time Linear-Quadratic Reinforcement Learning Problems},
  journal = {SIAM Journal on Control and Optimization},
  volume = {63},
  number = {5},
  pages = {3452--3474},
  year = {2025}
}

@article{HuangZhou25,
  author = {Huang, Yilie and Zhou, Xun Yu},
  title = {Data-Driven Exploration for a Class of Continuous-Time Indefinite Linear-Quadratic Reinforcement Learning Problems},
  journal = {arXiv:2507.00358},
  year = {2025}
}

@article{MondererShapley96,
  author = {Monderer, Dov and Shapley, Lloyd S.},
  title = {Potential Games},
  journal = {Games and Economic Behavior},
  volume = {14},
  pages = {124--143},
  year = {1996}
}

@article{Blume93,
  author = {Blume, Lawrence E.},
  title = {The Statistical Mechanics of Strategic Interaction},
  journal = {Games and Economic Behavior},
  volume = {5},
  number = {3},
  pages = {387--424},
  year = {1993},
  doi = {10.1006/game.1993.1023}
}

@article{BlumeDurlauf03,
  author = {Blume, Lawrence E. and Durlauf, Steven N.},
  title = {Equilibrium Concepts for Social Interaction Models},
  journal = {International Game Theory Review},
  volume = {5},
  number = {3},
  pages = {193--209},
  year = {2003},
  doi = {10.1142/S021919890300101X}
}

@article{ArnoldPress89,
  author  = {Arnold, Barry C. and Press, S. James},
  title   = {Compatible Conditional Distributions},
  journal = {Journal of the American Statistical Association},
  volume  = {84},
  number  = {405},
  pages   = {152--156},
  year    = {1989},
  doi     = {10.1080/01621459.1989.10478750}
}

@book{ArnoldCastilloSarabia92,
  author    = {Arnold, Barry C. and Castillo, Enrique and Sarabia, Jos{\'e}-Mar{\'i}a},
  title     = {Conditionally Specified Distributions},
  series    = {Lecture Notes in Statistics},
  volume    = {73},
  publisher = {Springer},
  address   = {New York},
  year      = {1992},
  doi       = {10.1007/978-1-4612-2912-4}
}

@article{JZ23err,
  author  = {Jia, Yanwei and Zhou, Xun Yu},
  title   = {Continuous-Time Q-Learning: Erratum to ``q-Learning in Continuous Time''},
  journal = {Journal of Machine Learning Research},
  year    = {2025},
  note    = {Erratum to Journal of Machine Learning Research 24 (2023), Article 161},
  url     = {https://www.jmlr.org/papers/volume24/22-0755/erratum.pdf}
}

@article{Aumann74,
  author  = {Aumann, Robert J.},
  title   = {Subjectivity and Correlation in Randomized Strategies},
  journal = {Journal of Mathematical Economics},
  volume  = {1},
  number  = {1},
  pages   = {67--96},
  year    = {1974},
  doi     = {10.1016/0304-4068(74)90037-8}
}

@inproceedings{OrtizSchapireKakade07,
  author    = {Ortiz, Luis E. and Schapire, Robert E. and Kakade, Sham M.},
  title     = {Maximum Entropy Correlated Equilibria},
  booktitle = {Proceedings of the Eleventh International Conference on Artificial Intelligence and Statistics},
  series    = {Proceedings of Machine Learning Research},
  volume    = {2},
  pages     = {347--354},
  year      = {2007},
  publisher = {PMLR}
}

@inproceedings{CernyAnZhang22,
  author    = {{\v C}ern{\'y}, Jakub and An, Bo and Zhang, Allan N.},
  title     = {Quantal Correlated Equilibrium in Normal Form Games},
  booktitle = {Proceedings of the 23rd ACM Conference on Economics and Computation},
  pages     = {210--239},
  year      = {2022},
  publisher = {ACM},
  doi       = {10.1145/3490486.3538350}
}

@inproceedings{LiYuDorflerLygeros24,
  author    = {Li, Sarah H. Q. and Yu, Yue and D{\"o}rfler, Florian and Lygeros, John},
  title     = {A Coupled Optimization Framework for Correlated Equilibria in Normal-Form Games},
  booktitle = {2024 IEEE 63rd Conference on Decision and Control (CDC)},
  pages     = {1739--1744},
  year      = {2024},
  publisher = {IEEE},
  doi       = {10.1109/CDC56724.2024.10886643}
}

\end{document}